\theoremstyle{plain} 
\newtheorem{theorem}{Theorem}[section]
\newtheorem{proposition}[theorem]{Proposition}
\newtheorem{corollary}[theorem]{Corollary}
\newtheorem{lemma}[theorem]{Lemma}
\newtheorem{conjecture}[theorem]{Conjecture}
\theoremstyle{definition} 
\newtheorem{example}[theorem]{Example}
\newtheorem{remark}[theorem]{Remark}
\newtheorem{definition}[theorem]{Definition}
\newtheorem*{mainthm}{Theorem}
\newtheorem{assumption}[theorem]{Assumption}
\DeclareMathOperator{\colim}{colim}
\DeclareMathOperator{\gr}{gr}
\DeclareMathOperator{\id}{id}
\DeclareMathOperator{\HC}{HC}
\DeclareMathOperator{\CC}{CC}
\DeclareMathOperator{\PHI}{\mathbf{\Phi}}
\DeclareMathOperator{\Aut}{Aut}
\DeclareMathOperator{\Perf}{Perf}
\DeclareMathOperator{\sfr}{sfr}
\DeclareMathOperator{\Sp}{sp}
\DeclareMathOperator{\fr}{fr}
\DeclareMathOperator{\nn}{\textbf{n}}
\DeclareMathOperator{\crit}{crit}
\DeclareMathOperator{\KK}{\mathrm{K_0}}
\DeclareMathOperator{\tr}{\mathop{tr}}
\DeclareMathOperator{\Gl}{GL}
\DeclareMathOperator{\coker}{coker}
\DeclareMathOperator{\re}{Re}
\newcommand{\Ho}{\mathrm{H}}
\newcommand{\spec}{\mathop{\rm Spec}\nolimits}
\newcommand{\nilp}{{\rm nilp}}
\renewcommand{\phi}{\varphi}
\newcommand{\Hom}{\mathop{\rm Hom}\nolimits}
\newcommand{\im}{\mathop{\rm Im}\nolimits}
\newcommand{\SL}{{\rm SL}}
\newcommand{\MHM}{\mathop{\rm MHM}\nolimits}
\newcommand{\MHS}{\mathop{\rm MHS}\nolimits}
\newcommand{\MHW}{\mathop{\rm MHW}\nolimits}
\newcommand{\Tr}{{\rm Tr}}
\newcommand{\Gr}{\mathop{\rm Gr}\nolimits}
\newcommand{\rat}{\mathop{\rm rat}\nolimits}
\newcommand{\pt}{\mathop{\rm pt}}
\renewcommand{\tilde}{\widetilde}
\newcommand{\Cp}{\mathbb{C}}
\renewcommand{\P}{\mathbb P}
\newcommand{\D}{\mathcal D}
\newcommand{\cN}{\mathcal N}
\newcommand{\N}{\mathbb N}
\newcommand{\C}{\mathbb C}
\newcommand{\Q}{\mathbb Q}
\begin{document}

\title[Purity and quantum cluster positivity]{Purity for graded potentials \\ and quantum cluster positivity}

\author[B. Davison ]{Ben Davison}
\address{B. Davison: GEOM, EPFL Lausanne}
\email{ndavison@epfl.ch}

\author[D. Maulik ]{Davesh Maulik}
\address{D. Maulik: Department of Mathematics, Columbia University}
\email{dmaulik@math.columbia.edu}

\author[J. Sch\"urmann ]{J\"org Sch\"urmann}
\address{J.  Sch\"urmann: Mathematische Institut, Universit\"at M\"unster}
\email{jschuerm@math.uni-muenster.de}

\author[B. Szendr\H oi ]{Bal\'azs Szendr\H oi}
\address{B. Szendr\H oi: Mathematical Institute, University of Oxford}
\email{szendroi@maths.ox.ac.uk}

\subjclass[2010]{14C30, 32S35, 14N35, 13F60}

\keywords{vanishing cycle sheaf, purity of mixed Hodge structure, cluster algebra, quantum cluster positivity}

\begin{abstract} Consider a smooth quasiprojective variety $X$ equipped with a $\C^*$-action, 
and a regular function $f\colon X\to\C$
which is $\C^*$-equivariant with respect to a positive weight action on the base.
We prove the purity of the mixed Hodge structure and the hard Lefschetz theorem on the cohomology of 
the vanishing cycle complex of $f$ on proper components of the critical locus of $f$, 
generalizing a result of Steenbrink for isolated quasi-homogeneous singularities. 
Building on work of Kontsevich--Soibelman, Nagao and Efimov, we use this result to prove the quantum positivity 
conjecture for cluster mutations for all quivers admitting a positively graded nondegenerate potential. 
We deduce quantum positivity for all quivers of rank at most~4; quivers with nondegenerate 
potential admitting a cut; and quivers with potential associated to triangulations of surfaces with marked points and nonempty boundary.
\end{abstract}
\maketitle
\tableofcontents
\thispagestyle{empty}

\section{Introduction}

Let~$X$ be a smooth quasiprojective variety of pure dimension~$n$, and let $f\colon X \to\C$
be a regular function.  Consider $\phi_f\Q_X[n]$, the perverse sheaf of 
vanishing cycles, supported on the critical locus~$Z$ of~$f$.
Its cohomology groups $H^{k}(Z, \phi_f\Q_X)$
naturally carry a mixed Hodge structure by classical work of Steenbrink and Navarro Aznar
\cite{St_limit, St_mhs, Nav}, or as an application of Saito's theory of mixed Hodge modules \cite{S1, S2}.  

In general, the weight filtration on the cohomology of vanishing cycles 
can be quite complicated. In the first part of this paper, we study this 
filtration when~$f$ admits a suitable $\C^*$-action and its critical 
locus is proper.

More precisely, assume that~$X$ carries a $\C^*$-action, so that~$f$ is equivariant with respect to the 
weight~$d$ action of~$\C^*$ on~$\C$, with $d>0$. Assume also that we have a proper subvariety~$Z\subset \{f=0\}$ 
that is is a union of connected components of the reduced critical locus $\{ df=0\}_{\rm red}\subset X$ of~$f$.
Under these hypotheses, we show the following purity and hard Lefschetz results (proven as Corollary~\ref{cor:pure}):
\begin{mainthm}
\begin{enumerate}
\item[(i)]
There is a direct sum decomposition
\[H^k(Z, \phi_f\Q_X|_Z) = H^k(Z, \phi_{f,1}\Q_X|_Z) \oplus H^k(Z, \phi_{f,\neq 1}\Q_X|_Z)\]
into pure Hodge structures of weights~$k$, $k-1$ respectively, where $\phi_{f,1}$ and $\phi_{f, \neq 1}$
are unipotent and non-unipotent vanishing cycles functors.
If~$d=1$, then the second piece vanishes. 
\item[(ii)]
If~$L$ is an ample line bundle on~$Z$, 
the natural cup product map defines a hard Lefschetz isomorphism
$$c_{1}(L)^k:  H^{n-k}(Z, \phi_{f,1}\Q_X|_Z) \to H^{n+k}(Z, \phi_{f,1}\Q_X|_Z)(k)$$
with an analogous statement for $\phi_{f,\neq 1}$.
\end{enumerate}
\end{mainthm}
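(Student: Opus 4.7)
The plan is to work in Saito's framework of mixed Hodge modules, where the vanishing-cycle complex $\phi_f \Q_X[n-1]$ canonically lifts to a mixed Hodge module $\phi_f^H \Q_X^H[n-1]$ on $Z$, and the monodromy decomposition $\phi_f \cong \phi_{f,1} \oplus \phi_{f,\neq 1}$ extends to this setting. Here $\phi_{f,1}$ is the unipotent (monodromy eigenvalue~$1$) summand and $\phi_{f,\neq 1}$ the complementary summand supported on the eigenspaces for the non-trivial $d$-th roots of unity. The weight shift by~$1$ between the two summands in the theorem arises from the standard convention in Saito's theory, reflecting the extra $\log$-term in the unipotent nearby cycle $\psi_{f,1}$ relative to $\psi_{f,\neq 1}$. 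The task therefore reduces to showing that each piece, restricted to $Z$, is pure as a mixed Hodge module, since the hypercohomology of a pure Hodge module on a proper variety is a pure Hodge structure of matching weight.

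The key ingredient is the $\C^*$-action. The strategy is to reduce to a projective situation: since $f$ is $\C^*$-equivariant of positive weight~$d$, I would construct a $\C^*$-equivariant partial compactification $\bar X \supset X$ (for instance by taking a weighted projective closure) such that $f$ extends to a section $\bar f$ of a $\C^*$-linearized line bundle $\mathcal L$ of weight~$d$ on $\bar X$. Because $Z$ is proper and consists of full connected components of $\{df=0\}_{\red}$, an analytic neighborhood of $Z$ in $\bar X$ coincides with one in $X$, so no new critical points of $\bar f$ can accumulate onto $Z$ from the boundary; consequently $\phi_{\bar f}^H \Q_{\bar X}^H|_Z \cong \phi_f^H \Q_X^H|_Z$ as mixed Hodge modules on $Z$. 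Now on the projective variety $\bar X$ one can apply Saito's decomposition theorem to the pushforward of the vanishing-cycle Hodge module along $\bar X \to \pt$: the result is a direct sum of pure Hodge modules (for each monodromy-eigenvalue part separately), which, together with the monodromy decomposition and the agreement on $Z$, yields purity~(i). The vanishing of $\phi_{f,\neq 1}$ when $d=1$ then follows because the non-unipotent monodromy eigenvalues arise from the residual $\mu_d$-action on the Milnor fibres, and $\mu_1$ is trivial.

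For part~(ii), once we know that the hypercohomology of each summand is a pure Hodge structure, I would invoke Saito's hard Lefschetz theorem for the pure Hodge module $\phi_{f,1}^H \Q_X^H|_Z$ (respectively $\phi_{f,\neq 1}^H \Q_X^H|_Z$) on the projective variety $Z$, with respect to the ample class $c_1(L)$; this produces the claimed isomorphism after the customary identification of perverse and standard cohomological degrees. The main obstacle lies in the compactification step: one must produce a $\C^*$-equivariant $(\bar X, \bar f, \mathcal L)$ such that $\bar X$ is projective, $\bar f$ restricts to $f$ on $X$, and no new critical locus meets a neighborhood of $Z$. Because $X$ is merely quasiprojective and the equivariant compactification may a priori be badly singular or introduce unwanted critical points, this step will likely require combining an equivariant resolution of singularities with a careful weighted projective embedding respecting the $\C^*$-action. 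A potential alternative route that bypasses compactification is to use Braden's hyperbolic localization together with a purity theorem for monodromic mixed Hodge modules on the $\C^*$-attracting set, exploiting the $\C^*$-equivariance directly to split the weight filtration without ever leaving~$X$.
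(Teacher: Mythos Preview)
Your reduction in the second sentence of the proposal contains a genuine gap: you claim that the task reduces to showing that each monodromy summand of $\phi_f\Q_X$ restricted to $Z$ is a \emph{pure} mixed Hodge module, and then appealing to purity of hypercohomology of pure Hodge modules on proper varieties. But $\phi_f\Q_X|_Z$ is \emph{not} pure as a mixed Hodge module in general, and the paper explicitly remarks on this (see the Remark immediately following Corollary~\ref{cor:pure}, with the example $f=xyz$ on a neighbourhood of three projective lines meeting at a point). Consequently your invocation of Saito's decomposition theorem for the pushforward of $\phi_f$ along $\bar X\to\pt$ is circular: that theorem requires the input to be pure, and purity of $\phi_f$ is exactly what fails. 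The same issue undermines your argument for~(ii), since you propose to apply Saito's hard Lefschetz to the ``pure Hodge module $\phi_{f,1}^H\Q_X^H|_Z$'', which need not be pure.

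The paper circumvents this by reversing the order of operations. After the equivariant compactification step (which you correctly anticipate), one has a \emph{proper} $f\colon X\to\C$, and then one uses the commutation $\phi_{id,\bullet}(f_*\Q_X^H)\cong (a_{X_0})_*\phi_{f,\bullet}\Q_X^H$ to first push forward the \emph{pure} object $\Q_X^H$ along the proper map $f$, obtaining a pure Hodge module complex on~$\C$, and only then take vanishing cycles at~$0$. The weight filtration on $\phi_{id,\bullet}$ of a pure Hodge module is the shifted monodromy filtration, so purity of the hypercohomology is equivalent to \emph{semisimplicity} of the monodromy $T$ acting on the nearby fibre. Here the $\C^*$-equivariance enters in a concrete way: it forces the local systems $R^if_*\Q_X|_{\C^*}$ to be locally constant, so that the associated polarized variation of Hodge structure on~$\C^*$ has constant pullback to the universal cover, and Schmid's rigidity theorem then shows the monodromy preserves the polarization and Hodge filtration, hence acts by an isometry of a positive definite form, hence is semisimple. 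This is the step your outline is missing; neither the decomposition theorem applied to $\phi_f$ nor hyperbolic localization supplies it. The hard Lefschetz part likewise requires Saito's machinery for $W$-filtered Hodge modules strongly polarized by the pair $(N,S')$, not the plain hard Lefschetz for pure Hodge modules.
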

While one can prove this statement using classical techniques, our proof will be a simple application of the theory of mixed Hodge modules, which also gives a more general statement.
It can be viewed as a higher-dimensional 
generalization of a classical result of Steenbrink~\cite{St} for quasi-homogeneous isolated singularities.
This result was conjectured in an early version of~\cite{Sz}, 
where potential applications to geometric engineering are discussed. We were
also inspired by a related conjecture and an example of Efimov~\cite{Efi11}. We can also 
view this result as proving the purity of part of the B-model noncommutative Hodge structure associated
to a graded Landau--Ginzburg model~\cite{KKP}, see especially [ibid., Remark 3.7]. 
For another application of the corresponding results for (unipotent) nearby cycles,  
see \cite[Remark 3.5]{BBM}, where the intersection space cohomology of a projective hypersurface 
with isolated singularities is studied.

In the second part of this paper, we combine this theorem with work of
Efimov~\cite{Efi11} on quantum cluster algebras.   We refer the reader to Section ~\ref{sec:Qcluspos} 
for basic definitions regarding cluster algebras, their quantum counterparts, and other terms that follow in this introduction.

Given a quiver~$Q$, assumed to have no loops or 2-cycles,
the \textit{quantum} cluster algebra~$A_{\Lambda, Q}$ is a noncommutative algebra built from $Q$ and the extra data of a 
compatible skew-symmetric matrix $\Lambda\in\mathrm{Mat}_{Q_0\times Q_0}(\mathbb{C})$, where $Q_0$ is
the set of vertices.  As in the classical context, distinguished generators of this algebra are conjectured to satisfy positivity properties.
Following work of Nagao~\cite{Nag13}, Efimov
uses the cohomological Hall algebra constructions of Kontsevich-Soibelman~\cite{KS10}
to reduce the quantum cluster positivity conjecture to a conjectural purity statement for
vanishing cohomology. 

In order to apply the first half of the paper, we work with quivers $Q$ which admit a potential $W$ in the usual sense that
is both nondegenerate (as in ~\cite{DWZ08}) and graded; that is, we impose the extra condition on $(Q,W)$ that
there is a grading of the edges of~$Q$ for which every term in~$W$ has fixed positive weight (as in \cite{AO10}). 
The following is our main result regarding quantum cluster algebras.

\begin{mainthm}
Let~$Q$ be a quiver admitting a graded nondegenerate potential, and~$\Lambda$ a compatible skew-symmetric matrix. Let $Y\in\mathcal{A}_{\Lambda,Q}$ be a quantum cluster monomial, and let $(Z_1,\ldots,Z_n)$ be a quantum cluster.  Then 
\[Y=\sum_{\mathbf{n}\in\mathbb{Z}^{Q_0}}a_{\mathbf{n}}(q)Z_{1}^{\mathbf{n}(1)}\cdots Z_{n}^{\mathbf{n}(n)},\] where all but finitely many of the~$a_{\mathbf{n}}(q)$ are zero, and the nonzero~$a_{\mathbf{n}}(q)$ are positive integer combinations of polynomials of 
the form 
\[P_{N,k}(q):=q^{\frac{N}{2}}(q^{\frac{-k}{2}}+q^{\frac{2-k}{2}}+\ldots+q^{\frac{k}{2}}),\] 
with~$N$, and the parity of~$k$, fixed by~$\mathbf{n}$.
\end{mainthm}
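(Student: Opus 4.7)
The plan is to combine the purity theorem of Part~I with the reduction by Nagao and Efimov from quantum cluster positivity to a purity statement for vanishing cycle cohomology on moduli spaces of framed representations of the Jacobi algebra.

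First, I would recall the setup of \cite{Nag13, Efi11}. Given a quiver with potential $(Q,W)$, appropriate framing and stability data produce, for each relevant dimension vector $\mathbf{d}$, a smooth quasiprojective moduli space $\M^{\mathrm{fr}}_{\mathbf{d}}$ carrying a regular function $f=\tr(W)$ whose critical locus is the moduli of stable framed modules over the Jacobi algebra $J(Q,W)$. Wall-crossing in the cohomological Hall algebra of Kontsevich--Soibelman \cite{KS10} identifies the coefficients $a_{\mathbf{n}}(q)$ in the cluster monomial expansion of $Y$ with shifted $q$-weighted Poincar\'e polynomials of the mixed Hodge structures on $H^{*}(\M^{\mathrm{fr}}_{\mathbf{d}(\mathbf{n})}, \phi_{f}\Q)$. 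Thus quantum positivity in the precise form of the theorem reduces to showing that these mixed Hodge structures are pure (yielding the nonnegative integer coefficients) and satisfy hard Lefschetz (cutting each contribution into pieces of the required $P_{N,k}(q)$ shape).

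Second, I would use the graded nondegenerate potential to verify the hypotheses of the purity theorem of Part~I. A grading assigning positive integer weights $w_a$ to the arrows $a\in Q_1$ for which $W$ is homogeneous of weight $d>0$ induces a $\C^{*}$-action on the representation space $\bigoplus_{a\in Q_1}\Hom(V_{s(a)},V_{t(a)})$ by scaling each arrow component by its weight. This action descends to $\M^{\mathrm{fr}}_{\mathbf{d}}$ and makes $f=\tr(W)$ equivariant of positive weight $d$. The framed stability condition in \cite{Nag13, Efi11} is chosen so that the critical locus is projective on each connected component indexed by $\mathbf{n}$. The hypotheses of the theorem of Part~I are then in place, and we conclude that the mixed Hodge structure on $H^{*}(\M^{\mathrm{fr}}_{\mathbf{d}}, \phi_{f}\Q)$ is pure and that the hard Lefschetz theorem holds with respect to any ample class on the critical locus.

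Finally, a polarized pure Hodge structure satisfying hard Lefschetz for an integral $(1,1)$-class decomposes via the primitive Lefschetz decomposition into a direct sum of $\sl_2$-irreducibles. The Poincar\'e polynomial of such a piece, once translated through the Hodge-to-$q$ dictionary used by Efimov, is precisely of the form $q^{N/2}(q^{-k/2}+q^{(2-k)/2}+\cdots+q^{k/2}) = P_{N,k}(q)$, where $N$ encodes the Hodge weight of the pure component (fixed by $\mathbf{d}(\mathbf{n})$) and $k+1$ is the length of the primitive Lefschetz string (whose parity is forced by the dimension of $\M^{\mathrm{fr}}_{\mathbf{d}(\mathbf{n})}$). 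Combining with the identification of the previous step yields the asserted expansion of $a_{\mathbf{n}}(q)$. The main obstacle I expect is the bookkeeping between the two $q$-gradings, namely the cohomological Hall algebra weight conventions on one side and the weight/parity conventions of $P_{N,k}(q)$ on the other; granting this compatibility (which is essentially extracted from \cite{Efi11}), the argument is a clean application of Part~I inside the Nagao--Efimov framework.
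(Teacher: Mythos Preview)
Your outline captures the overall strategy correctly, but there is a genuine gap at the step where you invoke properness. You write that ``the framed stability condition in \cite{Nag13, Efi11} is chosen so that the critical locus is projective on each connected component indexed by~$\mathbf{n}$.'' This is not true: the critical locus $\mathcal{M}^{\sfr,W^{\diamondsuit}}_{\nn,\mathbf{s},\gamma}$ of $\tr(W^{\diamondsuit})_{\nn,\gamma}$ on the smooth framed moduli space is in general only quasiprojective. What actually appears in Efimov's formula~\eqref{efthm} is the cohomology of the vanishing cycle sheaf restricted to the \emph{nilpotent} locus $\mathcal{M}^{\sfr,\Sp,W^{\diamondsuit}}_{\nn,\mathbf{s},\gamma}$, a closed subscheme of the critical locus. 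To apply Corollary~\ref{cor:pure} you must show two things: that this nilpotent locus is projective, and that it is a union of connected components of the reduced critical locus (otherwise restricting $\phi_f$ to it has no purity content). Neither is formal. The paper establishes this as Proposition~\ref{unionofcomponents}, and the argument occupies all of Section~\ref{subsec:families}: one shows that the colimit of infinitesimal thickenings of the nilpotent locus is representable by a projective scheme by using the derived equivalence $\Phi_{\mathbf{s}}$ to identify it with a quiver Grassmannian of a finite-dimensional $\hat{J}(Q,W)$-module (Proposition~\ref{equivalenceofstacks}), and then invokes a Nakayama-type lemma (Lemma~\ref{indvarieties}) to conclude open-and-closedness. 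Without this, the hypotheses of the purity theorem are simply not met.

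A secondary point you have glossed over: the graded nondegenerate potential $W$ on $Q$ is a priori formal, and after mutating along $\mathbf{s}$ the resulting potential need not be algebraic, so $\tr(W)$ is not yet a regular function on the moduli space. The paper handles this with Corollary~\ref{formaltoalg}, truncating to an algebraic graded potential $W^{\diamondsuit}$ on $\mu_{\mathbf{s}}(Q)$ that remains nondegenerate for the relevant sequence; the $\C^*$-equivariance then comes from the grading on the \emph{mutated} quiver (which may involve nonpositive edge weights, cf.\ the conventions of \cite{AO10}), not on $Q$ itself. Finally, the parity claim for $k$ does not follow from hard Lefschetz alone; the paper extracts it from Proposition~\ref{intpowers}, which asserts that the modified weight polynomial is a Laurent polynomial in $q$ rather than $q^{1/2}$.
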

Given a quiver~$Q$, we may always find a compatible~$\Lambda$, possibly after expanding~$Q$ in a way that retains nondegeneracy of potentials (see Lemma~\ref{quantizationprop}, Remark~\ref{quantizationrem} and Proposition~\ref{adding_trees}). 
So we deduce the following application to the classical positivity conjecture of Fomin and Zelevinsky.
\begin{mainthm}
Let~$Q$ be a quiver admitting a graded nondegenerate potential, and let $\mathcal{A}_Q$ be the resulting 
commutative cluster algebra.  Let $Y\in\mathcal{A}_{Q}$ be a cluster monomial, and let $(Z_1,\ldots, Z_n)$ 
be a cluster.  Then 
\[Y=\sum_{\mathbf{n}\in\mathbf{Z}^{Q_0}}a_{\mathbf{n}}Z_1^{\mathbf{n}(1)}\cdots Z_n^{\mathbf{n}(n)},\] 
where all but finitely many of the $a_{\mathbf{n}}$ are zero, and all the $a_{\mathbf{n}}\in\mathbb{Z}_{\geq 0}$.
\end{mainthm}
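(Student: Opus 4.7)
The plan is to reduce this statement to the quantum positivity theorem proved just above, by producing a compatible quantization of $Q$ and then specializing $q \to 1$. The key numerical input is that for every $N \in \mathbb{Z}$ and $k \in \mathbb{Z}_{\geq 0}$ one has $P_{N,k}(1) = k+1 \in \mathbb{Z}_{>0}$, so any polynomial of the form appearing in the quantum theorem specializes at $q=1$ to a nonnegative integer.

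First I would invoke Lemma~\ref{quantizationprop}, Remark~\ref{quantizationrem} and Proposition~\ref{adding_trees} to embed $Q$ as a subquiver of a larger quiver $\widetilde{Q}$ which still admits a graded nondegenerate potential (extending the given one on $Q$) and for which a compatible skew-symmetric matrix $\widetilde{\Lambda}$ does exist. Concretely, one attaches trees of additional vertices; the resulting classical cluster algebra $\mathcal{A}_{\widetilde{Q}}$ contains $\mathcal{A}_Q$ in a controlled way, and the cluster $(Z_1,\ldots,Z_n)$, as well as the cluster monomial $Y$, lifts to a cluster, resp.\ cluster monomial, of $\mathcal{A}_{\widetilde{Q}}$ (enlarged by the extra cluster variables coming from the tree).

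Next, lift $Y$ to a quantum cluster monomial $\widetilde{Y} \in \mathcal{A}_{\widetilde{\Lambda},\widetilde{Q}}$ in the corresponding quantum cluster algebra, and apply the preceding quantum positivity theorem with respect to the lifted cluster. This gives an expansion of $\widetilde{Y}$ whose coefficients are nonnegative integer combinations of the polynomials $P_{N,k}(q)$. Specializing at $q=1$, the quantum cluster algebra $\mathcal{A}_{\widetilde{\Lambda},\widetilde{Q}}$ collapses to $\mathcal{A}_{\widetilde{Q}}$, quantum cluster monomials specialize to classical cluster monomials, and the coefficients become nonnegative integers. Restricting back to $\mathcal{A}_Q$ by setting the extra cluster variables from the tree extension to~$1$ (they appear only as factors in the expansion, indexed by the extra components of $\mathbf{n}$) recovers the required expansion of $Y$ in $(Z_1,\ldots,Z_n)$ with coefficients in $\mathbb{Z}_{\geq 0}$.

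The real work is entirely in the quantum theorem; the present deduction is essentially formal, with the only subtle point being the first step. The obstacle there is that one must ensure the tree-extension procedure produces a quiver that retains both a \emph{graded} and \emph{nondegenerate} potential, as the quantum theorem requires both hypotheses simultaneously; this compatibility is exactly the content of Proposition~\ref{adding_trees} together with the quantization lemma. Granted that, specialization at $q=1$ converts quantum positivity into classical positivity in a clean way.
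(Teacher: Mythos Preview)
Your proposal is correct and follows essentially the same route as the paper's own argument (Corollary~\ref{classpos}): quantize by attaching trees via Lemma~\ref{quantizationprop} and Remark~\ref{quantizationrem}, use Proposition~\ref{adding_trees} to retain nondegeneracy of the potential on the enlarged quiver, apply the quantum Lefschetz theorem, then specialize $q^{1/2}=1$ and set the extra variables to~$1$. Your write-up is in fact more explicit than the paper's on two points the paper leaves implicit: that $P_{N,k}(1)=k+1>0$, and that gradedness of the potential survives the tree extension (trivially, since no new cycles are created).
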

\noindent This corollary is a special case of the main result in the recent paper of Lee--Schiffler~\cite{LS}, who
prove classical positivity for any quiver. Since the techniques are 
completely different, we nevertheless mention it explicitly.

While not all quivers allow graded nondegenerate potentials (Remark~\ref{rem:not_all}), 
we deduce the above strong form of quantum positivity for cluster algebras 
arising from the following objects: 
\begin{itemize}
\item quivers of rank at most 4;
\item
quivers mutation equivalent to an acyclic quiver, recovering the results of Kimura and Qin~\cite{FQ};
\item
quivers for which there is a nondegenerate potential admitting a cut, such as those coming from the Geiss--Leclerc--Schr\"oer construction~\cite{GLS08};
\item
dual quivers to ideal triangulations of surfaces with marked points and nonempty boundary, i.e. the cluster algebras of Fomin--Shapiro--Thurston~\cite{FST06, LF09}, using the potentials associated by Labardini-Fragoso \cite{LF09} to such quivers.
\end{itemize}
While we are unable to deduce positivity results for general quivers of rank 5 (Remark~\ref{5ormore}), our result implies quantum positivity for one well-studied example, the quiver coming from the Somos-5 sequence (Remark~\ref{5somos}).

\removelastskip\vskip.5\baselineskip\par\noindent{\bf Disclaimer.}
For the second half of the paper (Sections~\ref{sec:Qcluspos} and onwards), 
we use crucially the paper~\cite{Efi11}, which in turn extends the 
machinery of~\cite{KS08, KS10}. Some of these papers, and 
foundational work they rely on, remain in preprint form.
We comment on the necessary prerequisites further in Remark~\ref{remark:statusofKS}. 

\removelastskip\vskip.5\baselineskip\par\noindent{\bf Outline.}  We give a brief outline of the contents of this paper.  In Section $2$, we state some preliminary facts about mixed Hodge modules. In Section $3$, we use these to prove the main purity and Lefschetz results.  In Section $4$, we discuss background and applications: in Section $4.1$, we first give an application to Donaldson-Thomas invariants of quivers; in Sections $4.2$--$4.4$, we recall definitions and terminology regarding cluster algebras, mutations and categorification; in Section $4.5$, we review the work of Efimov relating positivity to purity. In order to apply these results in our setting, we require a certain result on families of nilpotent quiver representations, which we establish in Section $4.6$. Finally, in Section $4.7$, we prove positivity for quivers with graded potential and discuss the list of examples mentioned earlier. 

\removelastskip\vskip.5\baselineskip\par\noindent{\bf Conventions.}
 We are using the shifted convention for the (unipotent) vanishing and nearby cycle functors $\phi_f=\phi_{f,1}\oplus \phi_{f,\neq 1}$ and $\psi_f=\psi_{f,1}\oplus \psi_{f,\neq 1}$, mapping the perverse sheaf underlying a mixed Hodge module to a perverse sheaf. We will use the notation~$\phi_{f, \bullet}$ for either of~$\phi_{f,1}$ or~$\phi_{f,\neq 1}$ and similarly~$\psi_{f, \bullet}$ . Let $\epsilon(\bullet):=0$ for $\phi_{f, \bullet}=\phi_{f,1}$ and
$\epsilon(\bullet):=-1$ for $\phi_{f, \bullet}=\phi_{f,\neq 1}$ and either version of $\psi_{f, \bullet}$.
Tate twist as usual is denoted~$(-1)$. $\rat$ denotes the exact realization functor from mixed Hodge modules to perverse sheaves and also the corresponding derived functor. 

\removelastskip\vskip.5\baselineskip\par\noindent{\bf Acknowledgements.}
 We wish to thank Alex Dimca, Alexander Efimov, Bernhard Keller,
 Andr\'as Szenes, Daniel Labardini-Fragoso and Geordie Williamson for
 discussions, and an anonymous referee for spotting many
 inaccuracies. We also thank the American Institute of Mathematics for
 supporting, and the R\'enyi Institute of Mathematics, Budapest for
 hosting, a workshop in May 2012 where these ideas were first
 discussed.  During the preparation of this paper, BD was supported by
 Fondation Sciences Math\'{e}matiques de Paris and the DFG SFB/TR 45 
``Periods, moduli spaces and arithmetic of algebraic varieties''; 
DM was partially supported by NSF Grant DMS-1159416; JS was supported
by the  DFG SFB 878 ``Groups, geometry and actions''; BSz was
supported by EPSRC grant EP/I033343/1 and the Humboldt Foundation.   

\section{Preliminaries}

We start by recalling some results of M. Saito \cite{S1, S2}. 

\begin{theorem} Let $f\colon X\to \C$ be an algebraic morphism, and
$M\in \D^b\MHM(X)$ a pure (algebraic) mixed Hodge module complex of weight
$w$ (i.e.~~$Gr^W_jH^iM = 0$ for $j\neq i+w$). 
Fix $i\in \mathbb{Z}$. Assume one of the following. 
\begin{enumerate}
\item[a)] $f$ is proper, $Z= \{f=0\}$ and $H^i_\bullet=H^i(Z,\phi_{f,\bullet}(M))$.
\item[b)] $f$ is proper, $Z= \{f=0\}$ and $H^i_\bullet=H^i(Z,\psi_{f,\bullet}(M))$.
\item[c)] $f$ is not necessarily proper, but $Z\subset \{f=0\}$ is a proper union of connected components of
the support of $\phi_f(\rat(M))$, and $H^i_\bullet=H^i(Z,\phi_{f,\bullet}(M))$.
\end{enumerate}
In each case, we have a rational (graded polarizable) mixed Hodge structure 
\[H^i_\bullet \in \MHM(\pt)= \MHS^p\]
which carries an action of the nilpotent endomorphism 
\[N=\log(T_u)/2i\pi : (H_\bullet^i,F)\to (H_\bullet^i,F)(-1),\]
with $T_u$ being the unipotent part of the monodromy operator.

Then the weight filtration on $H^i_\bullet$ is the monodromy filtration of~$N$ 
shifted by $w'=w+i+\epsilon(\bullet)$,~i.e.
\[N^j\colon Gr^W_{w'+j}H_\bullet^i \to Gr^W_{w'-j}H_\bullet^i (-j)\]
is an isomorphism for all $j>0$. 

In particular, $H_\bullet^i$ is pure of weight $w'$ if and only if $N=0$, 
equivalently $T_u=id$, equivalently the monodromy $T$ acting on $H_\bullet^i$ is semisimple.
\label{thm_monodromy}
\end{theorem}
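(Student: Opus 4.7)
The plan is to deduce the theorem from two standard inputs in Saito's theory: (i) the structural result describing the weight filtration on the (unipotent and non-unipotent parts of the) vanishing and nearby cycles of a pure Hodge module, and (ii) the strict compatibility of proper pushforward of mixed Hodge modules with the weight filtration. Both are available in \cite{S1, S2}, so once they are quoted the argument is essentially a bookkeeping exercise in combining them.

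For the first input I would invoke Saito's theorem that, for a pure MHM $M$ on $X$ of weight $w$, the vanishing cycle $\phi_{f,\bullet}(M)$ and the nearby cycle $\psi_{f,\bullet}(M)$ in the shifted convention are mixed Hodge modules whose weight filtration coincides with the monodromy filtration of the nilpotent operator $N = \log(T_u)/(2\pi i)$ centered at $w + \epsilon(\bullet)$. Concretely, for $j > 0$,
\[ N^j \colon \mathrm{Gr}^W_{w+\epsilon(\bullet)+j}\phi_{f,\bullet}(M) \xrightarrow{\;\sim\;} \mathrm{Gr}^W_{w+\epsilon(\bullet)-j}\phi_{f,\bullet}(M)(-j) \]
is an isomorphism, with graded pieces that are themselves pure Hodge modules of the indicated weights; the same holds verbatim for $\psi_{f,\bullet}$. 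For a pure complex $M$ of weight $w$ one applies this to each perverse cohomology sheaf.

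Next I would pass to the point via proper pushforward. In cases (a) and (b) the properness of $f$ makes $Z = f^{-1}(0)$ proper, and in case (c) $Z$ is proper by hypothesis, while $\phi_{f,\bullet}(M)|_Z$ is a direct summand of $\phi_{f,\bullet}(M)$ whose support lies in $Z$. Thus in each case $H^i_\bullet$ is computed by the proper pushforward $a_{Z*}$, which equips it with a graded polarizable MHS and an induced nilpotent endomorphism, still denoted $N$. Saito's formalism guarantees that this pushforward is strict for weights and that a pure MHM of weight $k$ contributes a pure MHS of weight $k+i$ to $H^i a_{Z*}$, yielding the identification
\[ \mathrm{Gr}^W_{w'+j} H^i_\bullet \;\cong\; H^i(Z,\mathrm{Gr}^W_{w+\epsilon(\bullet)+j}\phi_{f,\bullet}(M)), \qquad w' = w + i + \epsilon(\bullet). \]
Transporting the previous monodromy isomorphism through this identification produces the desired $N^j\colon \mathrm{Gr}^W_{w'+j} H^i_\bullet \xrightarrow{\sim} \mathrm{Gr}^W_{w'-j} H^i_\bullet(-j)$ for all $j > 0$.

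The equivalent conditions at the end then follow formally: a nilpotent endomorphism of a finite-dimensional $\Q$-vector space has trivial monodromy filtration (concentrated in a single degree) if and only if it vanishes, and $N = 0$ is equivalent to $T_u = \mathrm{id}$, which is equivalent to semisimplicity of $T$. The main obstacle in carrying out this plan is not conceptual but lies in the careful tracking of shift conventions: one must confirm that Saito normalizes the monodromy filtration on $\psi_{f,\bullet}$ and on $\phi_{f,\neq 1}$ at $w-1$ while the one on $\phi_{f,1}$ sits at $w$, precisely matching the definition of $\epsilon(\bullet)$ fixed in the conventions of this paper.
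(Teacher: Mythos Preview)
Your argument has a genuine gap at the displayed identification
\[
\mathrm{Gr}^W_{w'+j} H^i_\bullet \;\cong\; H^i\bigl(Z,\mathrm{Gr}^W_{w+\epsilon(\bullet)+j}\phi_{f,\bullet}(M)\bigr).
\]
This does not follow from strictness of proper pushforward for weights. What strictness gives you is that the weight filtration on $H^i(Z,M')$ is the filtration induced by $W_\bullet M'$, i.e.\ $W_{k+i}H^i(Z,M')=\mathrm{im}\bigl(H^i(Z,W_kM')\to H^i(Z,M')\bigr)$. But passing to graded pieces, $\mathrm{Gr}^W_{k+i}H^i(Z,M')$ is in general only a \emph{subquotient} of $H^i(Z,\mathrm{Gr}^W_kM')$: the connecting homomorphisms in the long exact sequence for $0\to W_{k-1}M'\to W_kM'\to\mathrm{Gr}^W_kM'\to 0$ can be nonzero, since their source and target can share a weight. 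For a concrete failure, take $Z=\mathbb{P}^1$, $U=\mathbb{P}^1\setminus\{0,\infty\}$, and $M'=j_*\Q_U^H[1]$: then $H^0(Z,M')\cong\Q(-1)$ is one-dimensional, while $H^0(Z,\mathrm{Gr}^W_2M')\cong\Q(-1)^{\oplus 2}$. So you cannot simply transport the isomorphism $N^j\colon\mathrm{Gr}^W_{c+j}\phi_{f,\bullet}(M)\xrightarrow{\sim}\mathrm{Gr}^W_{c-j}\phi_{f,\bullet}(M)(-j)$ through $H^i(Z,-)$ to conclude.

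The paper's proof avoids this problem by reversing the order of operations: it first pushes forward along the proper map $f$ to $\C$, using the compatibility of vanishing cycles with proper pushforward,
\[
H^i\bigl(Z,\phi_{f,\bullet}(M)\bigr)\;=\;H^0\bigl(\{0\},\phi_{\mathrm{id},\bullet}(R^if_*M)\bigr),
\]
so that one is applying Saito's monodromy--weight description directly to the \emph{pure} Hodge module $R^if_*M$ of weight $w+i$ on the one-dimensional base. There is then no further pushforward of a mixed object to worry about. In case~(c) the paper first compactifies $f$ and extends $M$ by intermediate extension to reduce to case~(a), then uses that the desired property passes to direct summands. Your treatment of~(c) would be fine if the core argument worked, but the commutation of $\phi_f$ with $f_*$ (or an equivalent input) is the missing ingredient.
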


\begin{proof} In case a), 
using the properness of $f$, and noting that an algebraic mixed Hodge is automatically polarized, 
we have by \cite[Thm.2.14 on p.252]{S2} that 
\[H^i(Z,\phi_{f,\bullet}(M))= H^i(\{0\},\phi_{id,\bullet}(f_*M))
= H^0(\{0\},\phi_{id,\bullet}(R^if_*M))\]
On the other hand, by \cite[(4.5.2) and (4.5.4) on p.324]{S2}, 
pure Hodge module complexes are stable under direct images by the proper 
morphism $f\colon X\to \C$. Hence $f_*M$ is also pure of weight~$w$ and thus 
$R^if_*M$ is pure of weight~$w+i$. Then one can apply \cite[Prop.5.3.4, (5.3.4.2) on p.979]{S1}.

For case b), we argue analogously, using
\[H^i(Z,\psi_{f,\bullet}(M))= H^i(\{0\},\psi_{id,\bullet}(f_*M))
= H^0(\{0\},\psi_{id,\bullet}(R^if_*M))\:.\]

In case c), using the graph embedding and Nagata's compactification theorem, we can assume 
that~$f$ is the restriction of a proper complex algebraic morphism
$\tilde{f}\colon \tilde{X}\to \C$ to a Zariski open subset $j: X\hookrightarrow \tilde{X}$.
By~\cite[(4.5.4) on p.324]{S2}, we have a (non-canonical) decomposition 
\[M\simeq \oplus\: H^kM[-k] \in D^bMHM(X)\]
into pure Hodge modules. Applying the intermediate extension functor~$j_{!*}$ to each of 
the summands, we get pure Hodge modules
on the relative compactification $\tilde X$ by \cite[(4.5.2) on p.324]{S2}. 
Thus, we can assume that $M\simeq j^*\tilde{M}$ is the restriction to~$X$ of a pure mixed Hodge module 
complex~$\tilde{M}$ of weight~$w$ on~$\tilde{X}$.  
But then $H^i(Z,\phi_{f,\bullet}(M))$ is a direct summand of
$H^i(\{\tilde{f}=0\},\phi_{\tilde{f},\bullet}(\tilde{M}))$, since $Z\subset \{f=0\}$
is an open and closed subset of the support of $\phi_{\tilde{f}}(\rat(\tilde{M}))$.
This implies the claim by a).
\end{proof}

\begin{remark} In particular, recall that, when $X$ is smooth, the constant mixed Hodge module complex
$M=\Q^H_X\in D^bMHM(X)$ (with $\rat(\Q^H_X)= \Q_X$)  is pure of weight $w=0$. Moreover
$\Q^H_X[n]\in HM(X)$ in case $X$ is smooth and pure $n$-dimensional.
In this case, in case c), we can take $Z$ to be a proper  union of  connected components 
of the critical locus of $f$, since then the 
support of the sheaf $\phi_f(\Q_X)$ is just the critical locus of $f$.
 \end{remark}

\begin{theorem} (Hard Lefschetz)  Let $f\colon X\to \C$ be an algebraic morphism, and let
$M$ be a shifted pure Hodge module, i.e.~$M[n]\in HM(X)$. Assume one of the cases
{\rm a)--c)} of Theorem~\ref{thm_monodromy} above.
Assume also that for all $i$, the monodromy $T$ acting on $H_\bullet^i$ is semisimple,
so that $H_\bullet^i$ is pure of weight $w'=w+i+\epsilon(\bullet)$. 
Assume finally that $Z$ is projective with ample line bundle $L$.

Then, for $k > 0$, there exist isomorphisms of pure Hodge structures
\[ l^k\colon H_\bullet^{n-k}\to H_\bullet^{n+k}(k)\]
defined by $k$-fold cup product with $l=c_1(L)$.
\label{thm_lefsch}
\end{theorem}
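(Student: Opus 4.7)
The plan is to combine the reduction strategy of Theorem~\ref{thm_monodromy} with Saito's Hard Lefschetz theorem for pure Hodge modules, using the semisimplicity hypothesis to concentrate the weight filtration onto a single graded piece.

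As in the proof of Theorem~\ref{thm_monodromy}, I first reduce to the case where $f$ is proper: in cases b) and c), Nagata compactification together with the intermediate extension functor embed the situation into a proper one, and neither operation affects the ampleness of $L$ on the projective variety $Z$.

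Next, view $N := \phi_{f,\bullet}(M)$, suitably shifted, as a mixed Hodge module on the projective variety $Z$. By the part of Saito's theory invoked in the proof of Theorem~\ref{thm_monodromy}, the weight filtration $W$ on $N$ coincides, up to the shift $w+\epsilon(\bullet)$, with the monodromy filtration of $\log T_u/(2\pi i)$. In particular the graded pieces $\gr^W_j N$ are pure Hodge modules on $Z$ of weight $j$, and Saito's Hard Lefschetz theorem \cite[Thm.~5.3.1]{S1} applied to each of them yields isomorphisms of pure Hodge structures
\[ c_1(L)^k \colon H^{-k}(Z,\gr^W_j N) \xrightarrow{\ \sim\ } H^k(Z,\gr^W_j N)(k)\]
for every $j$ and $k>0$.

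Finally, lift these isomorphisms to $N$ itself using semisimplicity. By Theorem~\ref{thm_monodromy}, the assumption forces each $H^{n\pm k}_{\bullet}$ to be pure of the single weight $w+(n\pm k)+\epsilon(\bullet)$. In the weight spectral sequence $E_1^{p,q}=H^{p+q}(Z,\gr^W_{-p}N)\Rightarrow H^{p+q}(Z,N)$, this forces only the graded piece of weight $j_0=w+n+\epsilon(\bullet)$ to contribute to both $H^{-k}(Z,N)=H^{n-k}_{\bullet}$ and $H^{k}(Z,N)=H^{n+k}_{\bullet}$. Strictness of morphisms in $\MHS^p$ with respect to the weight filtration then lets me transport the Hard Lefschetz isomorphism on this single pure graded piece to the desired map $l^k \colon H^{n-k}_{\bullet}\to H^{n+k}_{\bullet}(k)$. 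The main technical obstacle is this last step: verifying that the cup product action of $c_1(L)$ is compatible with the weight spectral sequence, and that the graded HL isomorphism lifts coherently to the full cohomology. Both are standard within Saito's formalism of mixed Hodge modules, but require careful bookkeeping of weights and strictness.
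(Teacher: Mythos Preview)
Your outline diverges from the paper's argument and leaves a genuine gap at the final lifting step.

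The paper does not argue by applying Hard Lefschetz to the weight-graded pieces and then lifting. Instead it works in Saito's larger category $\MHW(Z)$ of $W$-filtered Hodge modules: the chosen polarization $S$ on the pure Hodge module $M[n]$ induces a pairing $S'=\phi_{f,\bullet}S$ on $M'=\phi_{f,\bullet}(M[n])$ satisfying $S'(N\otimes\id)+S'(\id\otimes N)=0$, and \cite[(5.2.10.2)--(5.2.10.3)]{S1} then shows that $(M',W,N,S')$ is \emph{strongly polarized} of weight $w''=w+\epsilon(\bullet)$ in the sense of \cite[Prop.~1, p.~855]{S1}. Hard Lefschetz for the cohomology of such an object under a projective pushforward is a direct citation of \cite[Prop.~1(ii)]{S1}. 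The semisimplicity hypothesis plays no role in the proof of the isomorphism itself; it is only there so that the statement can speak of isomorphisms of \emph{pure} Hodge structures.

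The gap in your approach is the claim that strictness lets you transport the isomorphism $c_1(L)^k\colon H^{-k}(Z,\gr^W_{j_0}N)\xrightarrow{\sim}H^{k}(Z,\gr^W_{j_0}N)(k)$ to $H^{\pm k}(Z,N)$. Strictness only buys you $E_2$-degeneration of the weight spectral sequence; combined with purity it identifies $H^{\pm k}(Z,N)$ with $E_2^{-j_0,\,j_0\pm k}$, which is merely a \emph{subquotient} of $E_1^{-j_0,\,j_0\pm k}=H^{\pm k}(Z,\gr^W_{j_0}N)$ via the $d_1$-differentials
\[
H^{\pm k-1}(Z,\gr^W_{j_0+1}N)\longrightarrow H^{\pm k}(Z,\gr^W_{j_0}N)\longrightarrow H^{\pm k+1}(Z,\gr^W_{j_0-1}N).
\]
There is no direct map between $N$ and $\gr^W_{j_0}N$, and on the neighbouring columns $c_1(L)^k$ is only injective (for $\gr^W_{j_0+1}$) or surjective (for $\gr^W_{j_0-1}$), so a map-of-complexes argument does not immediately give an isomorphism on cohomology. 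Showing that the $E_1$-isomorphism descends to $E_2$ is precisely the content of Saito's \cite[Prop.~1(ii)]{S1}, whose proof uses the interaction of the nilpotent monodromy operator, the induced polarization $S'$, and the Lefschetz operator --- it is not a formal consequence of weight-strictness. In short, what you label ``careful bookkeeping'' is the substantive input, and the paper obtains it by citing the strongly-polarized package rather than by reassembling it from Hard Lefschetz on the graded pieces.
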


\begin{proof} Let us assume that we are in case a) of Theorem~\ref{thm_monodromy}, so in particular
$Z=f^{-1}(0)$. 
Since $M[n] \in HM(X)$ by assumption, we can also assume (after a shift) that $M$ is a pure Hodge module.
Denote by $M'\in MHM(Z)$ either of the mixed Hodge modules $\phi_{f,\neq 1}(M)$ or $\phi_{f, 1}(M)$. 
By definition \cite[(5.1.6.2), p.956]{S1}, the weight filtration~$W$ of this mixed Hodge module
$M'$~is the monodromy filtration shifted by $w'$ of the nilpotent endomorphism
\[N=\log(T_u)/2i\pi : M'\to M'(-1).\]
This continues to hold in the larger abelian category $\MHW(Z)$ of~$W$-filtered Hodge modules, which 
contains $\MHM(Z)$ as a full subcategory.
This latter category is introduced in \cite[Lemma 5, p.854]{S1} in the case $Z$ smooth and 
more generally in \cite[p.237]{S2} for $Z$ singular (using local embeddings
and the smooth case from \cite{S1}). Here, $T_u$ again denotes the unipotent part of the monodromy; 
note that $N$ is an endomorphism from $M'$ to $M'(-1)$ by \cite[(5.1.3.4), p.953]{S1}.

Choose a polarization of the pure Hodge module $M$ of weight $w$, with underlying pairing 
\[S\colon \rat(M)\otimes \rat(M)\to  a^!_{X}\Q(-w)\]
as in \cite[Sec.5.2]{S1}. Here $a_X: X\to \pt$ is the constant map on $X$. 
This induces a pairing $S'=\phi_{f,\bullet} S$ on $M'$, 
with $\phi_{f,1} S$ resp. $\phi_{f,\neq 1} S =\psi_{f,\neq 1} S $ as in \cite[Sec.5.2]{S1} and
$w''=w+\epsilon(\bullet)$:
\[S'\colon \rat(M')\otimes \rat(M')\to a^!_Z\Q(-w'')\]
satisfying
\[S'(N\otimes id) + S'(id\otimes N)=0\]
by \cite[Lemma 5.2.5, p.965]{S1}. Then, by \cite[(5.2.10.2) and (5.2.10.3), p.968]{S1}, 
the $W$-filtered Hodge module $M'$ is strongly polarized by $N$ and $S'$ of weight $w''$ 
in the sense of \cite[Prop.1 on p.855]{S1}. 

Now take a closed embedding $Z\subset X'$ into a smooth projective variety $X'$ 
such that $L$ is the restriction of an ample line bundle on $X'$.
Then we can view (the pushforward of) $M'|Z$ as a $W$-filtered Hodge module on $X'$ with support in $Z$,
which is strongly polarized by $N$ and $S'$ of weight $w''$.
Therefore we can apply \cite[Prop.1(ii), p.855]{S1} to the constant map
$a_{X'}\colon X'\to \pt$ and $M'|Z$, to obtain the hard Lefschetz theorem for the cohomology 
of the nearby or vanishing cycles $M'|Z$ on $Z$.

The case b) is completely analogous, and so is c) using a compactification as in the proof of
Theorem~\ref{thm_monodromy} above.
\end{proof}

\section{Purity and hard Lefschetz results}

In what follows, let $X$ be a quasiprojective variety, equipped with 
a pure Hodge module complex $M\in \D^b\MHM(X)$ of weight $w$.  For 
a morphism $f\colon X\to \C$, let $R^if_*\rat(M)$ denote the
higher direct image sheaves of the underlying
constructible sheaf complex $\rat(M)$. Let $X_0=f^{-1}(0)$.

\begin{theorem} 
In the above setting, 
assume that $f\colon X\to \C$ is proper, and the restriction to
$\C^*\subset \C$ of $R^if_*\rat(M)$ is locally constant for all $i \in \mathbb{Z}$.
Then \[\phi_{id,\bullet}(f_*M)=(a_{X_0})_*(\phi_{f,\bullet}M) \in \D^b\MHM(\pt)\] is pure of
weight $w+\epsilon(\bullet)$, where $a_{X_0}: X_0\to\pt$ is the constant map. 

Assume further that $M$ is a shifted pure Hodge module, i.e.~$M[n]\in HM(X)$, and that
$Z\subset X_0$ is a projective union of connected components of
the support of $\phi_f(\rat(M))$, carrying an ample line bundle $L$.
Then the hard Lefschetz theorem holds with respect to $L$: we have isomorphisms
\[l^k:\:H^{n-k}(Z, \phi_{f,\bullet}M|_Z)\cong H^{n+k}(Z, \phi_{f,\bullet}M|_Z)(k) \]
defined by $k$-fold cup product with $l=c_1(L)$. 
\label{thm_general_purity}\end{theorem}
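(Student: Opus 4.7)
The plan is to deduce the theorem from Theorems~\ref{thm_monodromy} and \ref{thm_lefsch}, applied to the pure Hodge module complex $f_*M$ on $\C$ together with the identity morphism $\id\colon\C\to\C$. Since $f$ is proper and $M$ is pure of weight $w$, the complex $f_*M$ is pure of weight $w$ by Saito's stability of purity under proper direct image. The stated identity $\phi_{\id,\bullet}(f_*M)\simeq(a_{X_0})_*\phi_{f,\bullet}M$ is the compatibility of vanishing cycles with proper direct image already invoked in the proof of Theorem~\ref{thm_monodromy}. Applying case a) of Theorem~\ref{thm_monodromy} to $\id$ and the pure complex $f_*M$, the desired purity of $\phi_{\id,\bullet}(f_*M)$ of weight $w+\epsilon(\bullet)$ reduces to showing that the monodromy operator $T$ on each cohomology $H^i(\{0\},\phi_{\id,\bullet}(f_*M))$ is semisimple.

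The crux of the proof is this semisimplicity, and it is the single place where the local constancy hypothesis is used. Since $f_*M$ is pure, Saito's semisimplicity theorem ensures that each perverse cohomology $\mathcal{H}^k(f_*M)$ (pure of weight $w+k$) decomposes as a direct sum of simple pure Hodge modules on $\C$. The hypothesis that $R^if_*\rat(M)|_{\C^*}$ is locally constant forces each simple summand either to be supported at $\{0\}$ or to be the intermediate extension of a simple polarizable variation of Hodge structure on $\C^*$. In the latter case, the underlying rational local system is a simple representation of $\pi_1(\C^*)=\Z$, determined by a single endomorphism with irreducible minimal polynomial, and is therefore semisimple. Skyscraper summands at $\{0\}$ contribute only trivial monodromy. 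Assembling these pieces, the monodromy on $\rat(f_*M)|_{\C^*}$ around $0$ is semisimple, hence so is the induced $T$ on $H^i(\{0\},\phi_{\id,\bullet}(f_*M))$. Theorem~\ref{thm_monodromy} then yields the first assertion.

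For the hard Lefschetz statement we apply Theorem~\ref{thm_lefsch}. Its semisimplicity hypothesis on $H^i(Z,\phi_{f,\bullet}M|_Z)$ follows from what we have just proved: when $Z=X_0$ this is immediate, and when $Z\subsetneq X_0$, the space $H^i(Z,\phi_{f,\bullet}M|_Z)$ is a direct summand of $H^i(X_0,\phi_{f,\bullet}M)$ (since $Z$ is a union of connected components of the support of $\phi_f\rat(M)$) and hence inherits semisimple monodromy. The remaining hypotheses---$M[n]\in\HM(X)$, $Z$ projective, $L$ ample on $Z$---are given, so case a) or c) of Theorem~\ref{thm_lefsch} delivers the isomorphism $l^k\colon H^{n-k}(Z,\phi_{f,\bullet}M|_Z)\simeq H^{n+k}(Z,\phi_{f,\bullet}M|_Z)(k)$ directly. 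The main obstacle is the semisimplicity step, which rests entirely on Saito's semisimplicity of pure Hodge modules combined with the fact that irreducible complex representations of $\pi_1(\C^*)=\Z$ are one-dimensional.
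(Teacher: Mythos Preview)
Your proof is correct and shares the paper's overall architecture: reduce via properness and the compatibility $\phi_{\id,\bullet}(f_*M)\simeq (a_{X_0})_*\phi_{f,\bullet}M$ to the semisimplicity of the monodromy $T$, then invoke Theorems~\ref{thm_monodromy} and~\ref{thm_lefsch}. The divergence is only in how semisimplicity of $T$ is established. The paper decomposes each $R^if_*M$ by strict support into pieces $IC_S(L)$ with $S\in\{\{0\},\C^*\}$; for $S=\C^*$ it pulls the polarizable variation $L$ back to the universal cover $\C$, applies Schmid's rigidity theorem~\cite[Thm.~7.22]{Sch} to conclude that the monodromy preserves both the Hodge filtration and the polarization, and hence acts as an isometry of a positive-definite hermitian form on each fibre $L_z$---so semisimply---and then transports this to $\phi_{\id}$ via the $T$-equivariant surjection ${\rm can}\colon\psi_{\id}(IC_S(L))\to\phi_{\id}(IC_S(L))$. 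You instead refine to the decomposition into \emph{simple} pure Hodge modules, use that $\rat$ sends simple Hodge modules to simple perverse sheaves (so, on $\C^*$, to shifted irreducible $\Q$-local systems), and observe that an irreducible $\Q[\Z]$-module has monodromy with irreducible, hence separable, minimal polynomial and is therefore diagonalizable over $\C$. Your route is more elementary in that it trades Schmid's rigidity for linear algebra over $\Q$, exploiting the special feature $\pi_1(\C^*)\cong\Z$; the paper's route is more directly Hodge-theoretic and would go through unchanged over a punctured base with non-abelian fundamental group. For the hard Lefschetz assertion, your direct-summand reduction to Theorem~\ref{thm_lefsch} matches the paper.
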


\begin{proof} Since $f$ is proper, one has as before by \cite[(4.5.2) on p.324]{S2} that also
$f_*M\in D^bMHM(\C)$ is pure of weight~$w$, i.e.~$R^if_*M$ 
is a pure Hodge module of weight $w+i$ for all $i \in\mathbb{Z}$. The assumption implies that the restriction of the perverse sheaf $rat(R^if_*M)$ to $\C^*$ is a locally constant sheaf (up to shift). Moreover
\[H^i(X_0,\phi_{f,\bullet}(M))= H^i(\{0\},\phi_{id,\bullet}(f_*M))
= H^0(\{0\},\phi_{id,\bullet}(R^if_*M))\:.\]
By Theorems~\ref{thm_monodromy} and~\ref{thm_lefsch}, we only have to show
that the action of the monodromy operator $T$ on the underlying complex vector spaces is semisimple.

Fix $i \in \mathbb{Z}$ and let $M'=R^if_*M\in HM(\C)$.
By the strict support decomposition of a pure Hodge module \cite[Sec.5.1]{S1},
$M'$ is the direct sum  of twisted intersection
complexes $IC_S(L)$, with $L$ a polarizable variation of pure rational Hodge
structures on $S=\{0\}$ or $S=\C^*$ (by \cite[Lem.5.1.10 on p.967]{S1}).
In case $S=\{0\}$, we have $\phi_{id}( IC_S(L))= IC_S(L)=L_{0}$ with $T$ acting as the identity, 
so that there is nothing to prove.

In case $S=\C^*$, we claim that $L$ is a locally constant polarizable variation of pure rational Hodge
structures.  Indeed, its pullback to the universal cover $\C$
of $\C^*$ the underlying sheaf is constant.  Therefore, one can apply
the rigidity theorem of Schmid \cite[Thm.7.22]{Sch} for such variation of Hodge structures
on a compactifiable complex manifold (like $\C$), which implies that the monodromy $T$ acting 
on the stalk $L_z$ ($z\in \C^*$) preserves the polarization as well as the Hodge filtration of 
the pure rational Hodge structure $L_z$. In particular, it acts as an isometry of a positive 
definite hermitian form on $L_z$.
But this implies that the monodromy action  on $L_z$ and therefore also the monodromy action $T$
on $\psi_{id}( IC_S(L))\simeq L_z$ ($z\in \C^*$) is semisimple.
But the canonical morphism ${\rm can}: \psi_{id}( IC_S(L))\to \phi_{id}( IC_S(L))$ is $T$-equivariant 
and surjective (\cite[Lem.5.1.4 on p.953-954]{S1}), so that also the action of $T$ 
on $\phi_{id}( IC_S(L))$ is semisimple.
\end{proof}

\begin{corollary} Let $f\colon X\to\C$ be a regular function on a smooth
quasiprojective variety $X$. Assume that 
\begin{enumerate}
\item[a)] $X$ carries a $\C^*$-action, so that $f$ is equivariant with respect to the weight~$d$ 
action of~$\C^*$ on~$\C$, for some $d>0$, and 
\item[b)] $Z\subset X_0$ is a compact (thus projective) union of connected components of 
the support of the reduced critical locus $\{ df=0\}_{\rm red}\subset X$ of~$f$. 
\end{enumerate}
Then there is a direct sum decomposition
\[H^k(Z, \phi_f\Q_X|_Z) = H^k(Z, \phi_{f,1}\Q_X|_Z) \oplus H^k(Z, \phi_{f,\neq 1}\Q_X|_Z)\]
into pure Hodge structures of weights $k$, $k-1$ respectively. 
If $d=1$, then the second piece vanishes. Also, if $L$ is an ample line bundle on~$Z$
and~$X$ is pure-dimensional, 
then the corresponding hard Lefschetz theorem holds with the appropriate shift. 
\label{cor:pure}\end{corollary}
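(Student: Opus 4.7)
The plan is to apply Theorems~\ref{thm_monodromy}(c) and~\ref{thm_lefsch}(c) to the shifted constant mixed Hodge module $M = \Q_X^H[n]$, pure of weight $w = n$ on each $n$-dimensional connected component of $X$ (handling components separately in the non-equidimensional case). Since $Z$ is a union of connected components of $\{df = 0\}_{\rm red}$, it is an open-closed subset of the support of $\phi_f\rat(M)$, so case c) of both theorems applies. Unshifting gives $H^k(Z, \phi_{f,\bullet}\Q_X|_Z) = H^{k-n}(Z, \phi_{f,\bullet}M|_Z)$, and the weight formula $w + i + \epsilon(\bullet) = n + (k-n) + \epsilon(\bullet)$ from Theorem~\ref{thm_monodromy} translates into the claimed pure weights $k$ and $k-1$ on the $\phi_{f,1}$ and $\phi_{f, \neq 1}$ summands respectively. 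The entire corollary thus reduces to showing that the monodromy operator $T$ on $H^{i}(Z, \phi_{f,\bullet}\Q_X|_Z)$ is semisimple, equivalently that $N = 0$.

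The key input for semisimplicity is the $\C^*$-action itself. Since $f$ is $\C^*$-equivariant of weight $d > 0$, the finite subgroup $\mu_d \subset \C^*$ preserves every fibre of $f$ setwise, and hence acts on the sheaf $\phi_f\Q_X|_Z$. A standard path-lifting computation identifies the monodromy $T$ with the action of a generator $\zeta_d = e^{2\pi i/d} \in \mu_d$: the counterclockwise loop $s \mapsto t_0\, e^{2\pi i s}$ in $\C^*$ based at $t_0 \neq 0$ lifts via the $\C^*$-action to the path $s \mapsto e^{2\pi i s/d} \cdot x$ in $X$ (for any $x$ with $f(x) = t_0$), which closes up at $\zeta_d \cdot x \in f^{-1}(t_0)$. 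Since $\zeta_d$ has finite order, $T$ has finite order and is therefore semisimple over $\Q$, so $N = 0$ and purity follows. The splitting $\phi_f = \phi_{f,1} \oplus \phi_{f, \neq 1}$ is precisely the eigenvalue-$1$ versus complementary decomposition of this semisimple $T$.

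In the case $d = 1$, the group $\mu_1$ is trivial, $T = \id$, and the non-unipotent summand $\phi_{f, \neq 1}\Q_X|_Z$ therefore vanishes. Assuming additionally that $X$ is pure-dimensional and that $L$ is ample on the projective $Z$, the hard Lefschetz isomorphisms follow immediately from Theorem~\ref{thm_lefsch}(c) once semisimplicity has been verified.

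The main obstacle I anticipate is making the identification ``monodromy $=$ action of $\zeta_d$'' fully rigorous inside Saito's formalism for mixed Hodge modules, so that the canonical eigenvalue decomposition $\phi_f = \phi_{f,1} \oplus \phi_{f, \neq 1}$ and the Tate twists appearing in Theorem~\ref{thm_monodromy} are respected; in particular the compatibility must be checked at the sheaf-theoretic level rather than merely on Milnor fibres. This is classical folklore going back to Steenbrink's treatment of quasi-homogeneous isolated singularities, but some care is required to transport it to the higher-dimensional non-isolated setting treated here. Once this compatibility is in place, the corollary follows by a direct invocation of the preliminary theorems.
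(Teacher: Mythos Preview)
Your approach is correct but genuinely different from the paper's. You invoke Theorems~\ref{thm_monodromy}(c) and~\ref{thm_lefsch}(c) directly and prove semisimplicity by the geometric identification $T = \zeta_d$, so that $T^d = \id$ has finite order. The paper instead first passes to a $\C^*$-equivariant compactification of $X$ (via the closure of the graph and equivariant resolution) to make $f$ proper, and then applies Theorem~\ref{thm_general_purity}: the $\C^*$-equivariance forces the higher direct images $R^i f_*\Q_X$ to be locally constant on $\C^*$, and semisimplicity of the monodromy is then extracted from Schmid's rigidity theorem for polarizable variations of Hodge structure, not from any finite-order statement. For $d=1$ both arguments agree that the monodromy is trivial, though the paper phrases this via the pushforward $Rf_*\Q_X$.

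Your route is more elementary in that it avoids Schmid rigidity and the compactification step (the latter is already absorbed into the proof of case~(c)), and it gives the sharper conclusion that $T$ has order dividing $d$ rather than merely being semisimple. The paper's route, on the other hand, is more robust: Theorem~\ref{thm_general_purity} applies whenever the direct images happen to be locally constant on $\C^*$, with no torus action required, and the semisimplicity input is packaged as a general Hodge-theoretic fact. The caveat you raise about lifting the $\mu_d$-identification to Saito's formalism is real but manageable: functoriality of $\psi_f$ under automorphisms of the pair $(X,f)$ gives the $\mu_d$-action at the level of mixed Hodge modules, and compatibility with the intrinsic monodromy can be checked after applying $\rat$, where it reduces to the classical path-lifting argument you sketch; the monodromy on $\phi_f$ then inherits finite order via the $T$-equivariant surjection ${\rm can}\colon \psi_f \to \phi_f$.
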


\begin{proof} We first reduce to the case when $f$ is proper. Take a $\C^*$-equivariant 
compactification~$W$ of $X$, which exists since $X$ is quasiprojective.
Let $\Gamma$ denote the Zariski closure of the graph of $f$ inside 
$W\times\C$, whose projection to $\C$ is now proper. Finally, let $\tilde X$
be a $\C^*$-equivariant resolution of singularities of $\Gamma$, equipped with 
a proper map $\tilde f\colon\tilde X\to\C$. Then $X\subset\tilde X$ is 
an open set, and $\tilde f|_X = f$. The reduced critical locus of
$\tilde f$ contains $Z$ as an open and closed subset, since $Z$ is a proper 
union of components of the critical locus of $f$.
So we may assume that $f$ is proper as well as $\C^*$-equivariant. 

Now we can apply Theorem~\ref{thm_general_purity} above.  Finally
$$H^k(Z, \phi_{f,\neq 1}\Q_X|_Z)=H^k(Z, \psi_{f,\neq 1}\Q_X|_Z)$$ is a direct summand of
$$H^k(X_0, \psi_{f,\neq 1}\Q_X)=H^k(\{0\}, \psi_{id,\neq 1}Rf_*\Q_X)=0$$
in case $d=1$, since then the monodromy is acting trivially on $(Rf_*\Q_X)_z$
($z\in \C^*$) by the $\C^*$-action.
\end{proof} 

\begin{remark} Note that we do not claim that the Hodge module $\phi_f\Q_X|_Z$ itself is pure; this
is certainly not true in general. One example is discussed in~\cite[Sect.6]{Efi11}. In that example, 
the critical locus $Z$ is a union of three $\P^1$s joined at a point, and locally the function $f$ is 
given by $f(x,y,z)=xyz$. The module $\phi_f\Q_X|_Z$ has a nontrivial weight filtration, but its 
cohomology is pure. 
\end{remark}

\begin{remark} In Theorem~\ref{thm_general_purity} above, it is not necessary for $X$ to be smooth; for example, we can consider $M=IC^H_X$ in case~$X$ is pure dimensional. 
So in the proof of Corollary~\ref{cor:pure},
one can avoid equivariant
resolution of singularities, if one uses this pure intersection complex on an
equivariant (maybe singular) compactification. 
\end{remark} 

\begin{remark} Note that properness of the critical locus alone is not enough for
purity. Consider $f\colon\C^{n+1}\to \C$ with an isolated critical point at the origin. 
Then if $f$ is quasi-homogeneous, then by a classical result due to Steenbrink \cite[Thm.1]{St},
see also~\cite[Thm.7.1]{SS} and~\cite[Cor.5.5.5 on p.29]{Ku},
the monodromy acts semisimply on the (reduced) cohomology of the Milnor fibre, 
and hence the vanishing cohomology (at the origin) is pure, in accordance with our result Corollary~\ref{cor:pure}. 
However, if $f$ is not quasi-homogeneous, then this is not necessarily the case. 
An example is given by the $T_{p,q,r}$-singularity 
$$f(x,y,z):=x^p+y^q+z^r+axyz$$
with $a\neq 0$ and $p^{-1}+q^{-1}+r^{-1}<1$, for which the monodromy action on the 
(reduced) cohomology of the Milnor fibre at the origin is not semisimple; see \cite[Ex.9.1]{SS} and
\cite[Ex.7.3.5 on p.109]{Ku}.

On the other hand, there may be weaker conditions than the existence of a torus action which would allow
us to conclude purity. What we are really using is that certain local systems are locally constant over $\C^*$. 
This follows as long as the map $f$~is, or can be made, proper, with only one critical value.
\end{remark}

\section{Applications}

We refer to \cite{Kel10} for an excellent introduction to the following material.  Throughout this section, 
$Q$ will denote a finite quiver (directed graph), consisting of two sets $Q_0$ and $Q_1$, the vertices and 
the arrows of~$Q$ respectively, and two maps $s,t:Q_1\rightarrow Q_0$, giving for an arrow $a\in Q_1$ its source 
vertex $s(a)\in Q_0$ and its target vertex $t(a)\in Q_0$. A \textit{grading} of (the arrows of)~$Q$ is just a map 
of sets $Q_1\rightarrow \mathbb{Z}$. Note that any quiver admits the trivial grading which gives weight one to 
every arrow (i.e. $\Cp Q$ is graded by path length); this grading is often useful. When we call a quiver graded
below, we refer to an arbitrary grading, not necessarily the trivial one. Given a quiver $Q$, we denote its path 
algebra and its completion with respect to path length by $\Cp Q$ and $\widehat{\mathbb{C} Q}$. We may equivalently 
think of a quiver~$Q$ as being given by the data of a 
semisimple algebra $R=\oplus_{i\in Q_0}\Cp e_i$, generated by orthogonal idempotents, and an $R$-bimodule $T$ with a basis 
for $e_i Te_j$ provided by the arrows from $j$ to $i$. Then the (completed) path algebra is just the (completed) 
free unital tensor algebra of $T$ over $R$.

\subsection{Cohomological Donaldson--Thomas invariants of quivers}
\label{CohoDT}

One application, which originally lead to the above results in the form of conjectures, is discussed in detail
in~\cite{Sz}. 

Given a quiver~$Q$, a \textit{potential} on~$Q$ is a formal linear combination of cyclic words in the quiver~$Q$, or alternatively an element of the quotient of vector spaces $\widehat{\mathbb{C}Q} /[\widehat{\mathbb{C}Q},\widehat{\mathbb{C}Q}]$. We call a potential \textit{algebraic}, if it is a finite linear combination of cyclic words in the quiver, so can be considered as an element of $\mathbb{C}Q/[\mathbb{C} Q,\mathbb{C} Q]$ instead; note that the natural map $\Cp Q/[\Cp Q,\Cp Q]\rightarrow \widehat{\Cp Q}/[\widehat{\Cp{Q}},\widehat{\Cp{Q}}]$ is an injection. By QP, we will mean a pair $(Q,W)$ of a quiver with potential. We call a QP $(Q,W)$ algebraic if~$W$ is. 

If~$Q$ is graded, then the vector space $\widehat{\Cp Q}/[\widehat{\Cp Q},\widehat{\Cp Q}]$ is naturally graded, and a \textit{graded} potential~$W$ is a homogeneous element of $\widehat{\Cp Q}/[\widehat{\Cp Q},\widehat{\Cp Q}]$ which we will always assume to have strictly positive degree. We call $(Q,W)$ graded if $Q$ and $W$ are. A special case of a graded QP is that of a QP admitting a {\em cut}: the grading on~$Q$ takes values in $\{0,1\}$, and~$W$ is homogeneous of degree~$1$.   

Given a single cycle $u\in \mathbb{C}Q/[\mathbb{C}Q,\mathbb{C}Q]$, and $a\in Q_1$ an arrow of~$Q$, one defines 
\[
\frac{\partial u}{\partial a}=\sum_{\substack{b, c\text{ paths in }Q,\\\tilde{u}=bac}}cb
\]
where $\tilde{u}\in \Cp Q$ is a fixed lift of $u$.  One extends to a map $\frac{\partial}{\partial a}:\Cp Q/[\Cp Q,\Cp Q]\rightarrow \Cp Q$ by linearity.  
Given a (graded) algebraic QP, we define the (graded) \textit{Jacobi algebra}
\[
J(Q,W)=\Cp Q\Big/\Big\langle \frac{\partial W}{\partial a}\, \mathrm{\Big |}\, a\in Q_1\Big\rangle.
\]

Given a QP $(Q,W)$ with a marked vertex $0\in Q_0$, we define the {\em extended} or {\em framed} QP to be given by the pair $(\tilde Q, \tilde W)$, where $\tilde Q$ has one extra vertex $v\in\tilde Q_0$ with a single arrow to $0\in Q_0\subset\tilde Q_0$, and $\tilde W=W$. If $(Q,W)$ is graded, then $(\tilde Q, \tilde W)$ remains graded with trivial weight on the extra edge, and if $(Q,W)$ is algebraic, then $(\tilde Q, \tilde W)$ is too. We suppress the choice of the marked vertex from the notation, and assume, for the rest of this subsection, that $W$ is algebraic.

A dimension vector $\gamma=(\gamma_i)\in \N^{\Q_0}$ gives a dimension vector $\tilde\gamma=(1, \gamma_i)\in \N^{\tilde Q_0}$ 
on $\tilde Q$. 
Recall that a stability condition on a quiver in the sense of King~\cite{king} is given by the choice of a parameter
\[\theta \in \left\{ \theta \, | \, \theta(\tilde\gamma)=0\right\}\subset {\mathbb Z}^{\tilde Q_0} \subseteq \Hom\left(K_0(J(\tilde Q,W)\text{-mod}), {\mathbb Z}\right).\]
Let 
\[\cN_{\tilde Q, \tilde\gamma} =\bigoplus_{a\in \tilde Q_1}\Hom(\Cp^{\gamma_{t(a)}},\Cp^{\gamma_{s(a)}})\Big/\prod_{i\in Q_0}\Gl_{\Cp}(\gamma_{i}),
\]
where the action is via conjugation, be the stack of right representations of $\tilde Q$ with dimension vector~$\tilde\gamma$, 
framed at the vertex $v$,
and let \[\cN^\theta_{\tilde Q, \tilde\gamma}\subset \cN_{\tilde Q, \tilde\gamma}\] denote the open substack of $\theta$-stable 
representations. 
Then for generic $\theta$, the stack $\cN^\theta_{\tilde Q, \tilde\gamma}$ is in fact a smooth pure-dimensional 
quasiprojective variety, equipped with a regular function \[\tr(W)\colon \cN^\theta_{\tilde Q, \tilde\gamma} \to \C\] 
whose critical locus 
\[\cN^{\theta,W}_{\tilde Q, \tilde\gamma} =\{ d\tr(W)=0\}\subset \cN^\theta_{\tilde Q, \tilde\gamma} \]
is exactly the moduli space of $\theta$-stable representations of~$\tilde Q$ respecting the relations defined by
the noncommutative derivatives of~$W$. 
Then the cohomology groups 
\[\Ho^*\left(\cN^{\theta,W}_{\tilde Q, \tilde\gamma}, \phi_{\Tr W}\Q_{\cN^\theta_{\tilde Q, \tilde\gamma}}\right)\]
form (a version of) the {\em cohomological Donaldson--Thomas invariants}~\cite{DS, KS10} of the framed algebraic QP $(Q, W)$. 

With all these definitions, Corollary~\ref{cor:pure} above translates into

\begin{theorem} Assume that $(Q,W)$ is a graded algebraic QP. Assume also that the moduli space $\cN^{\theta,W}_{\tilde Q, \tilde\gamma}$ is 
projective. Then the mixed Hodge structure on $\Ho^k\left(\cN^{\theta,W}_{\tilde Q, \tilde\gamma}, \phi_{\Tr W}\Q_{\cN^\theta_{\tilde Q, \tilde\gamma}}\right)$ is a direct sum of two pure pieces of weights~$k$ and~$k-1$. If $(Q,W)$ admits a cut, then it is pure of weight~$k$. These cohomology groups also satisfy hard Lefschetz with respect to an ample line bundle. 
\label{thm:coDT}\end{theorem}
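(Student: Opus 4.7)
The plan is to verify that the hypotheses of Corollary~\ref{cor:pure} are satisfied by $X=\cN^\theta_{\tilde Q,\tilde\gamma}$ and $f=\Tr\tilde W\colon X\to\C$, with $Z=\cN^{\theta,W}_{\tilde Q,\tilde\gamma}$ playing the role of the proper union of components of the reduced critical locus. For generic~$\theta$, $X$ is smooth, pure-dimensional and quasiprojective, and the projectivity of~$Z$ is part of the hypothesis of the theorem, so the entire content of the argument is to produce a $\C^*$-action on~$X$ for which $f$ is equivariant with positive weight.

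For this, I would use the grading of $(Q,W)$ directly. First, extend the grading to $\tilde Q$ by assigning any weight, say~$0$, to the framing arrow from the new vertex~$v$ to~$0$; since $\tilde W = W$ does not involve this arrow, the homogeneous degree of~$\tilde W$ remains equal to the positive integer~$d$ specified by the grading of $W$. Now let $\C^*$ act on the affine representation space $\bigoplus_{a\in\tilde Q_1}\Hom(\C^{\gamma_{t(a)}},\C^{\gamma_{s(a)}})$ by scaling the $a$-component by $\lambda^{d_a}$, where $d_a\in\Z$ is the weight of the arrow~$a$. This commutes with the conjugation action of $\prod_{i\in\tilde Q_0}\Gl(\gamma_i)$, so it descends to the full representation stack. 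Subrepresentations of a quiver representation~$\rho$ are unchanged by the rescaling $\lambda\cdot\rho$, so the $\theta$-stable locus is $\C^*$-invariant and we obtain an induced $\C^*$-action on~$X$. Since $W$ is homogeneous of weight~$d$, each cyclic summand of $\Tr W$ is rescaled by $\lambda^d$, and therefore $f=\Tr\tilde W\colon X\to\C$ is equivariant with respect to the weight~$d>0$ action on~$\C$.

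Corollary~\ref{cor:pure} then delivers the decomposition
\[\Ho^k(Z,\phi_f\Q_X|_Z)=\Ho^k(Z,\phi_{f,1}\Q_X|_Z)\oplus \Ho^k(Z,\phi_{f,\neq 1}\Q_X|_Z)\]
into pure pieces of weights~$k$ and~$k-1$, together with the hard Lefschetz isomorphisms for any ample line bundle on~$Z$. If $(Q,W)$ admits a cut, then $W$ has degree $d=1$ with respect to a $\{0,1\}$-valued grading, so the ``$d=1$'' clause of Corollary~\ref{cor:pure} forces the $\phi_{f,\neq 1}$-summand to vanish and we obtain purity of weight~$k$.

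The only step requiring any care is verifying that the rescaling $\C^*$-action restricts to an honest action on the open subvariety~$X$ of $\theta$-stable objects, which reduces to the trivial observation that subrepresentations are unaffected by rescaling arrows. I do not anticipate any further obstacle, as all the substantive work has already been carried out in Corollary~\ref{cor:pure}; the real content of this theorem is the translation of the graded potential data into precisely the $\C^*$-equivariant geometric input that corollary demands.
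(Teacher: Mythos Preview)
Your proposal is correct and matches the paper's approach exactly: the paper does not write out a separate proof but simply states that Corollary~\ref{cor:pure} ``translates into'' this theorem, and your argument spells out precisely this translation (the $\C^*$-action induced by the edge grading, equivariance of $\Tr W$ with weight $d>0$, and the $d=1$ case for a cut). If anything, you have supplied more detail than the paper itself.
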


\begin{remark} The main motivation for introducing the above constructions is to find an algebraic analogue of studying moduli spaces of sheaves on certain local threefolds. The introduction of the extended quiver corresponds in these geometric situations to a framing of the sheaves considered. For $(Q,W)$ to admit a cut is quite common in geometric situations; for example, it holds for all QPs arising from consistent brane tilings~\cite{MR}. The assumption of properness in Theorem~\ref{thm:coDT} is strong, but also holds in certain interesting cases.
\end{remark}

\begin{example} Let~$Q$ be the conifold quiver with two vertices $Q_0=\{0,1\}$ and two arrows each from $0$ to $1$ and back denoted $a_{ij}$ and $b_{ij}$, with potential $W=a_{01}a_{10}b_{01}b_{10}-a_{01}b_{10}b_{01}a_{10}$. This QP is well known to admit a cut. Then, for dimension vectors $\tilde\gamma=(1,\gamma_0, \gamma_1)$ and suitable~$\theta$, the moduli spaces $\cN^{\theta, W}_{\tilde Q, \tilde\gamma}$ are various versions of rank-$1$ Donaldson--Thomas type moduli spaces of the resolved conifold geometry~\cite{NN}. For a more restrictive set of $\theta$, it follows from geometric considerations that these spaces are also proper. The purity of the cohomological DT invariants in this case has in~\cite{Sz} been connected via geometric engineering to a very different, $K$-theoretic purity of quiver moduli spaces attached to algebraic surfaces, whereas the Lefschetz action corresponds to a geometric $\SL(2)$-symmetry of the corresponding surface. 
\end{example}

\subsection{Mutations and the quantum cluster positivity conjecture}
\label{sec:Qcluspos}

A special case of the ideas of the previous subsection leads us to our main application in the theory of cluster algebras.  In this subsection, we introduce the relevant background material and state quantum cluster positivity and related conjectures.
While \cite{Kel10} remains a useful reference, we further follow the notations and conventions of \cite{Efi11}, where the link between positivity for quantum cluster algebras and purity of mixed Hodge structures arising in Donaldson--Thomas theory is first explored. 

Let us assume that~$Q$ has no loops (cycles of length 1) or oriented 2-cycles.  Given a labeling of $Q_0$ by the numbers $\{1\ldots, n\}$, we fix an integer $1 \leq m \leq n$ and define the $n\times m$ matrix $\tilde{B}$ by setting $b_{ij}=a_{ji}-a_{ij}$, where $a_{ij}$ is the number of arrows from the $i$th vertex of~$Q$ to the $j$th vertex.  The full subquiver generated by the vertices
$\{1,\ldots,m\}$ is denoted the principal part of $Q$.  We will be allowed to mutate (see below) at these vertices, but not at the others (the so-called ``frozen vertices''). 

Given a rank $n$ free $\mathbb{Z}$-module $L$ and a skew-symmetric form $\Lambda:L\times L\rightarrow \mathbb{Z}$, the quantum torus $T_{\Lambda}$ is a $\mathbb{Z}[q^{\pm 1/2}]$-algebra, freely generated as a $\mathbb{Z}[q^{\pm 1/2}]$-module by  elements $X^l$ for $l\in L$, with multiplication defined by $X^{e}\cdot X^{f}=q^{\Lambda(e,f)/2}X^{e+f}.$
Let $F_{\Lambda}$ denote the skew-field of fractions of $T_{\Lambda}$.  

A \textit{toric frame} is a map 
\[
M:\mathbb{Z}^n\rightarrow F_{\Lambda}
\]
of the form $M(c)=\phi(X^{\nu(c)})$ where $\phi\in \Aut_{\mathbb{Q}(q^{\pm 1/2})}(F_{\Lambda})$ and $\nu:\mathbb{Z}^n\rightarrow L$ is an isomorphism of lattices.

Let $\Lambda_M$ denote the skew-symmetric $n\times n$ integer matrix associated to $\Lambda$ via the isomorphism $\nu$.
We say that the pair $\tilde{B}$ and $\Lambda_{M}$ are \textit{compatible} if $$\tilde{B}^{T} \Lambda =\tilde{I},$$ where the first $m$ columns of $\tilde{I}$ are the $m\times m$ identity matrix $I_m$ and the rest of the columns are zero. 
In this case, the pair $(\tilde{B},M)$, or sometimes $(Q,M)$, is called a \textit{quantum seed}. 
The elements $M(1_{1}),\ldots,M(1_m)$ of $F_{\Lambda}$ are called the \textit{cluster variables}, while $M(1_{m+1}),\ldots,M(1_n)$ are the \textit{coefficients}. 

Without loss of generality, we can fix an identification $L = \mathbb{Z}^n$ at the start; given a skew-symmetric matrix compatible with $\tilde{B}$, we can take $\Lambda$ to be the associated skew form on $L$.  In this case, the toric frame $\nu = \id$ and $\phi = \id$ defines a quantum seed, which we take to be the \textit{initial seed}.

\smallbreak
Let $s\in\{1,\ldots,m\}$.  Then the mutation $\mu_s(Q)$ of~$Q$ at the vertex $s$ is defined as in \cite{FZ02}: first, for every path $ab$ of length 2 in~$Q$, where $b$ is an arrow from $j$ to $s$ and $a$ is an arrow from $s$ to $k$, we add a new arrow $[ab]$ from $j$ to $k$, and secondly we reverse all arrows $a$ from $s$ to $j$ to produce arrows $\overline{a}$ from $j$ to $s$, and likewise we replace all arrows $b$ from $j$ to $s$ with arrows $\overline{b}$ from $s$ to $j$.  The final part of the process of mutation is a \textit{deletion step} in that we cancel double arrows, i.e.~if, for any $i$ and $j$ in $Q_0$, there are $p$ arrows from vertex $i$ to vertex $j$, and $r$ arrows from $j$ to $i$, with $p\geq r$, we delete all the arrows from $j$ to $i$ and $r$ of the arrows from $i$ to $j$.  

We define mutation of quantum seeds as follows.  The mutation $\mu_s(\tilde{B})$ is defined as before: $\mu_s(\tilde{B})_{ij}$ is given by the number of arrows from the $j$th vertex of $\mu_s(Q)$ to the $i$th, minus the number from the $i$th vertex to the $j$th.  Finally,
\begin{align}
\mu_{s}(M)(1_i)=\begin{cases} M(1_i)& \mbox{for }i\neq s,\\
M(\sum_{b_{is}>0}b_{is}1_i-1_s)+M(-\sum_{b_{is}<0}b_{is}1_i-1_s) &\mbox{for }i=s.
\end{cases}
\end{align}
Specializing to $q^{1/2}=1$ we obtain the classical notion of cluster mutation.

In order for the above discussion to apply to a quiver $Q$, we require the existence of a compatible skew-symmetric matrix $\Lambda$ in order to construct quantum seeds.
We say the cluster algebra associated to $Q$ can be \textit{quantized} if there exists a quiver $Q' \supseteq Q$ 
for which we can find a compatible skew-symmetric matrix $\Lambda$, such that $Q'$ contains $Q$ as a full subquiver and the principal part of $Q'$ is the same as that of $Q$.  In other words, we can find a compatible matrix possibly after introducing extra cluster coefficients, or frozen vertices in the quiver language. 
\begin{lemma}[Quantization]\label{quantizationprop}
For every quiver~$Q$, the cluster algebra associated to~$Q$ can be quantized.
\end{lemma}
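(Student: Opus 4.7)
The plan is to enlarge $Q$ to $Q'$ by adjoining $m$ new frozen vertices, one for each vertex of the principal part, and then to exhibit a compatible $\Lambda$ explicitly by hand. Labelling the mutable vertices $\{1,\dots,m\}$ and the original frozen vertices $\{m+1,\dots,n\}$, I partition the exchange matrix of $Q$ as
\[
\tilde{B} = \begin{pmatrix} B \\ C \end{pmatrix},
\]
with $B$ the skew-symmetric $m\times m$ principal block and $C$ the $(n-m)\times m$ coefficient block. For each $i\in\{1,\dots,m\}$ I adjoin one new frozen vertex $i'$ together with a single arrow $i\to i'$, and nothing else. Then $Q\subset Q'$ is a full subquiver with the same principal part, and the enlarged exchange matrix is
\[
\tilde{B}' = \begin{pmatrix} B \\ C \\ I_m \end{pmatrix}.
\]

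The next step is to propose the block matrix
\[
\Lambda = \begin{pmatrix} 0 & 0 & -I_m \\ 0 & 0 & 0 \\ I_m & 0 & -B \end{pmatrix},
\]
partitioned conformally into blocks of sizes $m$, $n-m$, $m$. Skew-symmetry follows immediately from $B^T=-B$: the three diagonal blocks $0$, $0$, $-B$ are each skew-symmetric, and the three pairs of off-diagonal blocks are related by negation-transposition. A block multiplication then gives
\[
(\tilde{B}')^T \Lambda = \bigl(-B \mid C^T \mid I_m\bigr)\, \Lambda = \bigl(I_m \mid 0 \mid 0\bigr),
\]
since the first block column of $\Lambda$ contributes only $I_m\cdot I_m$, the middle block column of $\Lambda$ is zero, and the last block column collapses by the cancellation $(-B)(-I_m)+I_m(-B)=0$. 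This is precisely the compatibility condition $\tilde{B}^T\Lambda=\tilde{I}$ for the enlarged seed.

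The heart of the argument is the observation that the new $I_m$ block forces $\tilde{B}'$ to have full column rank, which is the genuine obstruction to quantization; once this is noticed, finding the right ansatz for $\Lambda$ is the only nontrivial step, and skew-symmetry together with the matrix identity is then a routine verification.
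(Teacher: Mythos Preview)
Your proof is correct and follows essentially the same approach as the paper: you adjoin one new frozen vertex per principal vertex to force an $I_m$ block in the enlarged exchange matrix, and then write down the compatible $\Lambda$ explicitly as a block matrix. The only cosmetic difference is the sign on the bottom-right block (you use $-B$, the paper uses $+B$ in your notation), and your block-by-block verification of $(\tilde B')^T\Lambda=(I_m\mid 0\mid 0)$ is exactly the routine check the paper leaves implicit.
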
 
\begin{proof}

Replace $Q$ with the quiver $Q'$ obtained as follows.  For each principal vertex $v$ of $Q$, we add a new frozen
vertex $v'$ and an arrow from $v$ to $v'$.  Let $B'$ denote the $(n+m) \times m$ matrix associated to $Q'$ as before; it consists
of $\tilde{B}$ concatenated with the $m\times m$ identity matrix.
Let $C$ denote the skew-symmetric $m \times m$ matrix obtained from the first $m$ rows of $\tilde{B}$.
Then 
$$\Lambda = \left(\begin{array}{ccc}0 & 0 & -\mathrm{id} \\ 0 & 0 & 0 \\ \id &0 & C \end{array}\right)$$
is compatible with $B'$.
\end{proof}

\begin{remark}\label{quantizationrem}
From the proof, we see that in order to quantize a quiver $Q$, it suffices to repeatedly perform the operation of gluing an acyclic quiver $T$, containing some vertex $i\in T_0$, to $Q$, containing a vertex $j$, by identifying $i$ with $j$.  
\end{remark}

We assume that an initial seed $(\tilde{B}, M)$ is given; if $\mathbf{s}$ is a sequence of vertices of~$Q$, we define $\mu_{\mathbf{s}}(\tilde{B})$ and $\mu_{\mathbf{s}}(M)$ recursively.  The set \[\{\{\mu_{\mathbf{s}}(M)(1_i)\mathrm{ | }\, i\in Q_0\},\mathbf{s}\text{ a sequence of vertices of }Q\}\] is called the set of \textit{quantum clusters} of $Q$.  The set
 \[
 \{\mu_{\mathbf{s}}(M)(\mathbf{n}), \mathbf{s}\text{ a sequence of vertices of }Q, \mathbf{n}\in\mathbb{Z}^{Q_0}_{\geq 0}\}
 \]
is called the set of quantum cluster monomials.  We denote by $\mathcal{A}_{\Lambda,Q}$ the $\mathbb{Z}[q^{\pm 1/2}]$-subalgebra of $F_{\Lambda}$ generated by the set 
 \[
 \{\mu_{\mathbf{s}}(M)(\mathbf{n}), \mathbf{s}\text{ a sequence of vertices of }Q, \mathbf{n}\in\mathbb{Z}^{Q_0}\text{ with }\mathbf{n}(i)\geq 0\text{ if }i\leq m\}.
 \]
The algebra $\mathcal{A}_{\Lambda,Q}$ is the \textit{quantum cluster algebra} associated to $\Lambda$ and~$Q$. Forgetting the data of~$\Lambda$, we form the classical cluster algebra $\mathcal{A}_Q$ by specializing $q^{1/2}=1$.
\begin{theorem}[Laurent phenomenon, \cite{BZ}, Corollary 5.2]
Given an arbitrary cluster monomial $Y\in \mathcal{A}_{\Lambda,Q}$, and an arbitrary quantum cluster $(Z_1,\ldots,Z_n)$, $Y$ is a Laurent polynomial in the $Z_i$, with coefficients $a_{\mathbf{n}}(q)$ in the ring $\mathbb{Z}[q^{\pm 1/2}]$.
\end{theorem}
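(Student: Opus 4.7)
The plan is to adapt the proof of the classical Laurent phenomenon of Fomin--Zelevinsky to the quantum setting, following the general strategy of Berenstein--Zelevinsky. The statement is essentially about showing that starting from any quantum seed, each iterated mutated cluster variable lies in the intersection $\bigcap_{i=1}^n \mathbb{Z}[q^{\pm 1/2}][Z_1^{\pm 1}, \ldots, Z_n^{\pm 1}]$ of Laurent polynomial subalgebras of $F_\Lambda$. Since quantum cluster monomials are products of cluster variables from a single seed, and such products manifestly yield Laurent polynomials in that seed, it suffices to prove the Laurent property for arbitrary single cluster variables.

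First I would record the quantum exchange relation: if $(\tilde B, M)$ is a quantum seed and $s \in \{1, \ldots, m\}$, then $\mu_s(M)(1_s)$ is by definition a binomial in $M$, and multiplying by $M(1_s)$ using the quasi-commutation rule $X^e X^f = q^{\Lambda(e,f)/2} X^{e+f}$ yields a relation expressing $\mu_s(M)(1_s)$ as an element of $\mathbb{Z}[q^{\pm 1/2}][M(1_1)^{\pm 1}, \ldots, M(1_n)^{\pm 1}]$. Compatibility $\tilde B^T \Lambda = \tilde I$ is precisely what guarantees that the two monomials appearing in $\mu_s(M)(1_s) \cdot M(1_s)$ can be rescaled to commute with $M(1_s)$ up to an integer power of $q^{1/2}$, so the mutated variable is a Laurent polynomial over $\mathbb{Z}[q^{\pm 1/2}]$ in the original cluster. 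This handles sequences of length one.

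Next I would run the induction following the Caterpillar Lemma framework. Fix an initial seed and a finite sequence $\mathbf{s} = (s_1, \ldots, s_N)$ of mutations; write $Y_j = \mu_{s_1 \cdots s_j}(M)(1_{s_{j+1}})$. The inductive assumption is that $Y_0, \ldots, Y_{N-1}$ are Laurent polynomials in $M(1_1), \ldots, M(1_n)$. One then observes that $Y_N$ is obtained from a single exchange relation applied to the seed $\mu_{s_1 \cdots s_N}(\tilde B, M)$, and one must show that substituting the inductive Laurent expressions back yields a Laurent polynomial in $M(1_1), \ldots, M(1_n)$. By the usual Caterpillar/overlap argument, it is enough to check this for $N \leq 3$ with various choices of $s_i \in \{s, s'\}$ at two adjacent principal vertices, and then patch. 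Explicitly, one verifies that the three algebras $\mathbb{Z}[q^{\pm 1/2}][\mu_{\mathbf t}(M)(1_i)^{\pm 1} : i=1,\ldots,n]$ obtained along a length-three caterpillar of mutations intersect in a common quantum upper cluster algebra containing all relevant variables.

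The main obstacle is the length-three overlap verification: the corresponding relations mix $q$-commutators nontrivially, and one must check that the quantum versions of the rank-two exchange identities (for types $A_1 \times A_1$, $A_2$, $B_2$, $G_2$ depending on the exchange matrix entries $b_{ss'} b_{s's}$) close up with the correct $q$-powers. Here the compatibility condition $\tilde B^T \Lambda = \tilde I$ is used repeatedly to ensure that whenever we need to reorder factors we pick up consistent $q$-scalars, so that the classical identity deforms to a valid quantum identity with coefficients in $\mathbb{Z}[q^{\pm 1/2}]$. Once this is checked, the induction closes and every mutated cluster variable, hence every cluster monomial, is a Laurent polynomial in any fixed quantum cluster with coefficients $a_{\mathbf{n}}(q) \in \mathbb{Z}[q^{\pm 1/2}]$, as claimed.
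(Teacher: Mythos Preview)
The paper does not give its own proof of this statement: it is quoted verbatim as \cite{BZ}, Corollary~5.2, with no argument supplied. So there is nothing in the paper to compare your proposal against directly.

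That said, your outline is essentially a sketch of the Berenstein--Zelevinsky argument that the paper is citing, so in that sense you are reproducing the intended proof. A few minor points are worth tightening. First, since the paper restricts to quivers (skew-symmetric exchange matrices), the rank-two subcases $B_2$ and $G_2$ do not arise; only $A_1\times A_1$, $A_2$, and the affine/wild skew-symmetric rank-two types occur, and the Caterpillar Lemma handles all of these uniformly without a finite-type case analysis. Second, your description of the inductive step is slightly imprecise: the Caterpillar Lemma does not reduce to ``$N\le 3$ with two adjacent vertices'' but to a specific overlap pattern showing that the upper bound algebras for three adjacent seeds coincide; the actual computation in \cite{BZ} verifies that certain exchange binomials are coprime in the appropriate quantum Laurent ring, which is where compatibility $\tilde B^T\Lambda=\tilde I$ enters. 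With these corrections your sketch is sound and matches the cited source.
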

The following conjecture, implicit in~\cite{BZ}, is a stronger version of the famous positivity conjecture of \cite{FZ02}, itself recently settled in~\cite{LS}. 
\begin{conjecture}[Quantum positivity]
In the above theorem, the polynomials $a_{\mathbf{n}}(q)$ belong to $\mathbb{Z}_{\geq 0}[q^{\pm 1/2}]$.
\end{conjecture}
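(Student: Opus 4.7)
Since the conjecture is stated for arbitrary $Q$ but Corollary~\ref{cor:pure} only applies once a positive-weight $\C^*$-action is available, my plan is to establish the stronger decomposition of the Main Theorem of the introduction under the hypothesis that $(Q,W)$ is graded and nondegenerate. The overall strategy, following Nagao and Efimov, is to realise each Laurent coefficient $a_{\mathbf{n}}(q)$ as a weight polynomial of vanishing cycle cohomology on a framed moduli space of representations of $(\tilde Q,\tilde W)$, and then to force that polynomial into the prescribed shape using the purity and hard Lefschetz results of Corollary~\ref{cor:pure}.

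The first task is to set up a compatibly quantized framework. By Lemma~\ref{quantizationprop} together with the envisaged Proposition~\ref{adding_trees}, one may enlarge $Q$ by gluing acyclic trees at principal vertices to obtain $(Q', W', \Lambda)$ that remains graded and nondegenerate while admitting a compatible skew form; cluster monomials of $\cA_{\Lambda, Q}$ embed into those of $\cA_{\Lambda', Q'}$, so it suffices to treat this enlarged setting. Secondly, Efimov's main computation \cite{Efi11}, built on the Kontsevich--Soibelman motivic wall-crossing formula, identifies the Laurent coefficient $a_{\mathbf{n}}(q)$ (with $\mathbf{n}$ reading off the framed dimension vector $\tilde\gamma$) with the weight polynomial of the mixed Hodge structure on
\[
H^*\bigl(\cN^{\theta,W}_{\tilde Q, \tilde\gamma},\, \phi_{\tr W}\Q_{\cN^\theta_{\tilde Q, \tilde\gamma}}\bigr),
\]
for a suitable generic framing stability $\theta$. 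The technical input of Section~4.6 on families of nilpotent quiver representations enters here, guaranteeing that the cohomology attached to the moduli really realises the expected DT-type invariant appearing in the wall-crossing formula.

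With this identification in hand, Corollary~\ref{cor:pure} finishes the job. The grading on $W$ yields a $\C^*$-action on the representation space of $\tilde Q$ commuting with the gauge group and under which $\tr W$ is equivariant of the given positive weight, so it descends to $\cN^\theta_{\tilde Q, \tilde\gamma}$. For the stability used by Efimov, $\cN^{\theta, W}_{\tilde Q, \tilde\gamma}$ is a projective union of components of the critical locus, meeting the hypotheses of the corollary. The resulting purity forces the weight polynomial to equal the Poincar\'e polynomial, already proving $a_{\mathbf{n}}(q)\in\Z_{\geq 0}[q^{\pm 1/2}]$, while hard Lefschetz equips each pure summand with an $\mathfrak{sl}_2$-action whose irreducible components are precisely the polynomials $P_{N,k}(q)$, with $N$ controlled by the dimension of $\cN^\theta_{\tilde Q, \tilde\gamma}$ and the parity of $k$ determined by the unipotent/non-unipotent splitting of the vanishing cycles.

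The main obstacle is the second step: converting the combinatorial Laurent expansion into the cohomological expression for $a_{\mathbf{n}}(q)$. This requires the full force of the cohomological Hall algebra and quantum dilogarithm identities of Kontsevich--Soibelman and Efimov, and the delicate verification that, after framing at one vertex and choosing $\theta$ appropriately, the moduli $\cN^{\theta, W}_{\tilde Q, \tilde\gamma}$ become genuinely projective components of the critical locus so that Corollary~\ref{cor:pure} applies. Once these ingredients are assembled, the purity and Lefschetz results from the first half of the paper conclude the argument formally, and the special cases listed in the introduction (quivers of rank $\le 4$, acyclic mutation class, QPs admitting a cut, marked-surface QPs) follow by verifying case by case that a graded nondegenerate potential exists.
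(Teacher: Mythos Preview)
Your high-level strategy is the paper's, but two of the identifications you make are off in ways that hide the real work.

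First, and most importantly, you assert that ``for the stability used by Efimov, $\cN^{\theta, W}_{\tilde Q, \tilde\gamma}$ is a projective union of components of the critical locus.'' In the paper's notation this space \emph{is} the critical locus of $\tr(W^{\diamondsuit})_{\nn,\gamma}$ on the smooth variety $\mathcal{M}^{\sfr}_{\nn,\mathbf{s},\gamma}$, and it is \emph{not} proper in general. What Efimov's formula (Theorem~\ref{hodgetocluster}) actually involves is the \emph{nilpotent} sublocus $\mathcal{M}^{\sfr,\Sp,W^{\diamondsuit}}_{\nn,\mathbf{s},\gamma}$, and the entire content of Section~4.6 is to show that this sublocus is projective and is a union of connected components of the reduced critical locus (Proposition~\ref{unionofcomponents}). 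That is not a bookkeeping step: it goes through the derived equivalence $\Phi_{\mathbf{s}}$ to identify the functor of stable framed nilpotent modules with a quiver Grassmannian of the finite-dimensional module $\Ho^1(P_{\nn})$ (Proposition~\ref{equivalenceofstacks}), which is what buys projectivity, and then uses the ind-scheme argument of Lemma~\ref{indvarieties} to get the open-and-closed conclusion. Your description of Section~4.6 as ``guaranteeing that the cohomology attached to the moduli really realises the expected DT-type invariant'' misplaces its role; without it you cannot invoke Corollary~\ref{cor:pure} at all.

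Second, you run the vanishing-cycle computation on the framed extension $(\tilde Q,\tilde W)$ of the \emph{original} QP as in Section~4.1. Efimov's expression~\eqref{efthm}, however, lives on the \emph{mutated} quiver $(\mu_{\mathbf{s}}(Q),W^{\diamondsuit})$, with the richer framing by $J(\mu_{\mathbf{s}}(Q),0)_{\nn}$ and the slope condition cut out by $\theta_{\mathbf{s}}$. This is why the paper first invokes Corollary~\ref{formaltoalg}: one must know that after mutation along $\mathbf{s}$ the potential can be represented by an \emph{algebraic} graded potential $W^{\diamondsuit}$, so that $\tr(W^{\diamondsuit})_{\nn,\gamma}$ is a genuine $\C^*$-equivariant regular function on $\mathcal{M}^{\sfr}_{\nn,\mathbf{s},\gamma}$. (Your quantization step via Lemma~\ref{quantizationprop} is unnecessary here, since for the quantum conjecture a compatible $\Lambda$ is already part of the data; that step is only used to deduce classical positivity in Corollary~\ref{classpos}.)
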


The quantum positivity conjecture has been proved in the acyclic case in \cite[Cor.3.3.10]{FQ}, extending techniques originally used to prove the classical version of the acyclic case in \cite{Nak11}. The following stronger conjecture is introduced in \cite{Efi11} and is proved in the case in which the initial seed $(Q,M)$ has~$Q$ an acyclic quiver, under the additional assumption that $Y$ is a product of cluster variables from the initial seed, or the $Z_i$ come from the initial seed.
\begin{conjecture}[Lefschetz property]
\label{LefProperty}
In the above conjecture, each $a_{\mathbf{n}}(q)$ is a positive integral combination of polynomials of the form $P_{N,k}(q):=q^{\frac{N}{2}}(q^{\frac{-k}{2}}+q^{\frac{2-k}{2}}+\ldots+q^{\frac{k}{2}})$, where for each $\mathbf{n}$, $N$ and the parity of $k$ are fixed.
\end{conjecture}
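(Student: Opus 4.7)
The plan is to prove Conjecture \ref{LefProperty} under the hypothesis (assumed in the paper's main result) that $Q$ admits a graded nondegenerate potential $W$ and $\Lambda$ is a compatible skew-symmetric matrix. The overall strategy is to combine Efimov's reduction of quantum cluster positivity to a cohomological purity statement with the purity and hard Lefschetz output of Corollary \ref{cor:pure} established in the first half of the paper.

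First, I would invoke Efimov's translation, building on Nagao and on the Kontsevich--Soibelman cohomological Hall algebra formalism, which realizes each coefficient $a_{\mathbf{n}}(q)$, up to an overall normalization $q^{N/2}$, as a weight polynomial of vanishing cycle cohomology groups
\[\Ho^*\Bigl(\cN^{\theta,W}_{\tilde Q, \tilde \gamma},\; \phi_{\Tr W} \Q_{\cN^\theta_{\tilde Q, \tilde \gamma}}\Bigr)\]
of the framed moduli spaces $\cN^\theta_{\tilde Q, \tilde \gamma}$ of Section \ref{CohoDT}, with $\tilde\gamma$ varying over framed dimension vectors prescribed by $\mathbf{n}$ and $\theta$ a generic stability parameter. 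Under this dictionary, quantum positivity is equivalent to purity of the mixed Hodge structure on these groups, and the stronger Lefschetz property is equivalent to the existence of a hard Lefschetz $\sl_2$-action on them.

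Second, I would verify the hypotheses of Corollary \ref{cor:pure} for the pair $(\cN^\theta_{\tilde Q, \tilde \gamma}, \Tr W)$. The grading on $Q$ induces a $\C^*$-action on $\cN^\theta_{\tilde Q, \tilde \gamma}$ scaling each arrow variable by its weight, and since $W$ is homogeneous of strictly positive degree $d$, the function $\Tr(W)$ is $\C^*$-equivariant of positive weight $d$. For generic $\theta$, $\cN^\theta_{\tilde Q, \tilde \gamma}$ is smooth, quasiprojective and pure-dimensional. One must then check that the connected components of the critical locus corresponding to finite-dimensional Jacobian representations are projective and $\C^*$-stable; this is where the framing together with nondegeneracy of $W$ and a careful analysis of nilpotent representations of $J(\tilde Q, \tilde W)$ enters, and is the content of the technical Section $4.6$ foreshadowed in the outline. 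Granting this input, Corollary \ref{cor:pure} delivers both the purity decomposition $\Ho^k=\Ho^k_1\oplus \Ho^k_{\neq 1}$ of weights $k$ and $k-1$, and hard Lefschetz with respect to a suitable ample line bundle.

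Third, I would convert the hard Lefschetz output into the $P_{N,k}$ decomposition. Hard Lefschetz turns each of the two pure pieces into a finite-dimensional $\sl_2$-module, which splits into a direct sum of irreducibles. An irreducible summand of dimension $k+1$ contributes cohomological degrees in an arithmetic progression symmetric about the middle degree, with weight polynomial exactly $q^{(N-k)/2}+q^{(N-k+2)/2}+\cdots+q^{(N+k)/2}=P_{N,k}(q)$, where $N$ is the fixed overall shift determined by $\tilde\gamma$ and hence by $\mathbf{n}$. The parity of $k$ matches the parity of the cohomological degrees in the summand, which is again constrained by $N$ and $\mathbf{n}$. Summing over Lefschetz summands and over the dimension vectors contributing to $a_{\mathbf{n}}(q)$ expresses it as a nonnegative integer combination of the $P_{N,k}$, as required. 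The main obstacle, and essentially the only non-formal step once Efimov's dictionary is in hand, is arranging the properness input for Corollary \ref{cor:pure}: one must show that the support of $\phi_{\Tr W}\Q$ on the framed moduli space contributing to $a_{\mathbf n}(q)$ is a projective, $\C^*$-stable union of components, which forces a delicate study of families of nilpotent representations of the Jacobi algebra and is where nondegeneracy of $W$ --- rather than merely the existence of a graded potential --- becomes indispensable.
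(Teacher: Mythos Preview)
Your overall architecture matches the paper's proof of Theorem~\ref{Lefschetz}: reduce via Efimov to a purity/Lefschetz statement for vanishing cycle cohomology, establish the properness hypothesis of Corollary~\ref{cor:pure} via the nilpotent-locus analysis of Section~4.6, and then read off the $P_{N,k}$ decomposition from the $\sl_2$-action. The final step, and the invocation of Proposition~\ref{unionofcomponents}, are exactly what the paper does.

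There is, however, a concrete misidentification that would derail the argument as written. Efimov's formula (Theorem~\ref{hodgetocluster}) does \emph{not} express $a_{\mathbf n}(q)$ via the simple framed moduli spaces $\cN^{\theta}_{\tilde Q,\tilde\gamma}$ of Section~\ref{CohoDT} with the original potential $W$. It uses the spaces $\mathcal{M}^{\sfr}_{\nn,\mathbf{s},\gamma}$ of Section~4.5, built from the \emph{mutated} quiver $\mu_{\mathbf{s}}(Q)$ with an algebraic representative $W^{\diamondsuit}$ of the mutated potential, and with stability governed by the slope $\theta_{\mathbf{s}}$ of Proposition~\ref{mut_is_tilt}; the relevant support is the \emph{nilpotent} sublocus $\mathcal{M}^{\sfr,\Sp,W^{\diamondsuit}}_{\nn,\mathbf{s},\gamma}$. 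Two consequences: (i) the $\C^*$-equivariance you need is not for $\Tr W$ on $\cN^\theta_{\tilde Q,\tilde\gamma}$ but for $\tr(W^{\diamondsuit})_{\nn,\gamma}$ on $\mathcal{M}^{\sfr}_{\nn,\mathbf{s},\gamma}$, and for this one must first produce a \emph{graded algebraic} $W^{\diamondsuit}$ on $\mu_{\mathbf{s}}(Q)$ --- this is precisely the content of Corollary~\ref{formaltoalg} together with the mutation of gradings from \cite{AO10}, a step you omit; (ii) the projectivity input from Section~4.6 (Proposition~\ref{unionofcomponents}) concerns this nilpotent locus on the mutated side, and its proof goes through the derived equivalence $\Phi_{\mathbf{s}}$ identifying it with a quiver Grassmannian for $\Ho^1(P_{\nn})$ over the original $(Q,W)$ --- not through ``nilpotent representations of $J(\tilde Q,\tilde W)$'' on the original framed quiver. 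Finally, the parity claim for $k$ is not automatic from Lefschetz alone; the paper deduces it from Proposition~\ref{intpowers} (that the modified weight polynomial is a Laurent polynomial in $q$, not just $q^{1/2}$).
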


\subsection{Mutation of quivers with potential}

In this section, we recall how to extend the notion of quiver mutation to quivers with potential, at least for QPs satisfying a certain nondegeneracy condition, and prove some basic lemmas.  The two statements that we will need for future sections are Proposition \ref{adding_trees}, which states that we can glue on acyclic quivers to an existing quiver $Q$ without effecting the nondegeneracy of potentials on $W$, which is a statement we need to make use of when we quantize, and Corollary \ref{formaltoalg}, which says that if we have a graded nondegenerate potential for a quiver $Q$ with respect to a sequence of vertices $\mathbf{s}$, we may find instead a graded \textit{algebraic} potential.

Start with a QP $(Q,W)$ consisting of a quiver $Q$ without loops or two-cycles, equipped with a possibly formal potential $W$. The \textit{premutation} $\mu_s'(Q,W)=(\mu_s'(Q),\mu_s'(W))$ of $(Q,W)$ is defined as follows: the underlying quiver $\mu_s'(Q)$ is defined in the same way as we defined mutation before, except that we skip the deletion step, i.e.~we do not remove double arrows (there may exist vertices $i$ and $j$ such that there are arrows from $i$ to $j$ and from $j$ to $i$). The premutation $\mu_s'(W)$ of~$W$ is defined by first defining $W_s$ to be the same linear combination of cyclic words as~$W$, except that for every path $ab$ of length two such that $s(a)=t(b)=s$, we replace every instance of $ab$ in every word of~$W$ with the arrow $[ab]$.  We then define
\begin{equation}
\label{W'def}
\mu_s'(W)=W_s+\sum_{\substack{{a,b\in Q_1}\\ s(a)=t(b)=s}}[ab]\overline{b}\overline{a}.
\end{equation}

Given two QPs, $(Q^{(1)},W^{(1)})$ and $(Q^{(2)},W^{(2)})$, with identical vertex set, their direct sum is defined as follows.
Heuristically, we draw both sets of arrows on the same set of vertices and take the corresponding sum of potentials.
More formally, we let $R=\oplus_{i\in Q_0}\Cp$ be the semisimple algebra defined by this vertex set, and $T^{(1)}$ and $T^{(2)}$ the $R$-bimodules corresponding to $Q^{(1)}$ and $Q^{(2)}$ respectively. We define 
\[(Q^{(1)},W^{(1)})\oplus (Q^{(2)},W^{(2)})=(Q^{(3)},W^{(3)}), \]
where $Q^{(3)}$ is given by the $R$-bimodule $T^{(1)}\oplus_{R\text{-bimod}} T^{(2)}$, and $W^{(3)}=W^{(1)}+W^{(2)}$. 

A QP $(Q,W)$ is called \textit{trivial}, if there is an isomorphism of $R$-algebras $\hat{J}(Q,W)\cong R$. It is called \textit{reduced}, if $W\in \prod_{p\geq 3}T^{\otimes_R p}$. Then we have the Splitting Theorem~\cite[Thm.4.6]{DWZ08},
according to which every QP $(Q,W)$ admits a splitting
\begin{equation}
\label{splitting}
(Q,W)\cong(Q_{\text{triv}},W_{\text{triv}})\oplus(Q_{\text{red}},W_{\text{red}})
\end{equation}
into a trivial and a reduced summand. This splitting is not unique, but the completed Jacobi algebra of the reduced part is well-defined up to isomorphism of $R$-algebras, with isomorphisms given by formal substitutions of variables taking arrows $a$ from $i$ to $j$ to $a_{\leq 1}+a_{>1}$, where $a_{\leq 1}=a$, and $a_{>1}$ is a linear combination of paths of length at least 2 from $i$ to $j$. 

Given this splitting construction, the \textit{mutation} $\mu_s(Q,W)$ of $(Q,W)$ is defined to be the reduced part 
of its premutation $\mu'_s(Q,W)$; while this does not define a unique choice of QP, 
the corresponding $\hat{J}(\mu_s(Q,W))$ is well defined up isomorphism given by substitutions as above. 
Note that even if $(Q,W)$ is an algebraic QP, it may not be possible to guarantee that $\mu_s(Q,W)$ can be represented
by an algebraic potential. 

We say that the potential~$W$ on the quiver~$Q$ is nondegenerate with respect to a vertex $s\in \{1,\ldots,m\}$, if $\mu_s(Q,W)$ can be represented by a reduced QP on a quiver which contains no 2-cycles. In this case, by \cite[Prop.7.1]{DWZ08} the underlying quiver of the mutated QP agrees with the mutated quiver $\mu_s(Q)$ defined before. We say that~$W$ is nondegenerate with respect to the sequence $\mathbf{s}=(s_1,\ldots,s_t)$ of vertices, if $(Q,W)$ can be mutated successively at the vertices $s_1,\ldots, s_t$ without producing a quiver with 2-cycles at any stage. Finally,~$W$ is just called \textit{nondegenerate}, if it is nondegenerate with respect to every sequence $\mathbf{s}$. Since we work over the uncountable field~$\Cp$, for every quiver~$Q$ there exists a nondegenerate algebraic potential (see \cite[Cor.7.4]{DWZ08}). 

\begin{lemma}\label{lem:rest:commutes} 
Let $Q$ be a full subquiver of an arbitrary quiver $Q'$. Given a potential $W$ on $Q'$, 
define the restriction operation $(Q',W)\mapsto(Q,W|_{Q})$, which sends the potential
\[
W=\sum_{l\text{ a cycle in }Q'}\alpha_l l
\]
to the potential
\[
W|_{Q}:=\sum_{l\text{ a cycle in }Q}\alpha_l l.
\]
Then for any vertex $i\in Q_0$ of $Q$, mutation at $i$ commutes with restriction.
\end{lemma}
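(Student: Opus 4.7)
The plan is to factor mutation as $\mu_i = r \circ \mu_i'$, with $\mu_i'$ the premutation and $r$ the reduction provided by the Splitting Theorem \cite[Thm.4.6]{DWZ08}, and then check compatibility with restriction at each step.

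First, premutation commutes with restriction on the nose. Since $Q$ is a full subquiver of $Q'$, an arrow of $Q'$ incident to $i$ lies in $Q_1$ iff its other endpoint lies in $Q_0$; hence the reversed arrows $\overline{a}$ and composites $[ab]$ created by premutation belong to the full subquiver $\mu_i'(Q')|_Q$ iff respectively $a \in Q_1$ and $a, b \in Q_1$. So the underlying quivers $\mu_i'(Q')|_Q$ and $\mu_i'(Q)$ coincide. For the potential, write $W = W|_Q + W^{\perp}$, where $W^{\perp}$ consists of cycles using at least one arrow of $Q' \setminus Q$. The substitution $ab \mapsto [ab]$ used to define $W_i$ respects this decomposition, so $W_i|_Q = (W|_Q)_i$; similarly, the extra sum in formula \eqref{W'def} restricts to the analogous sum indexed by pairs $a, b \in Q_1$. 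Combining these observations yields $\mu_i'(Q', W)|_Q = \mu_i'(Q, W|_Q)$.

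The reduction step takes more care, since the splitting \eqref{splitting} is only canonical up to isomorphism via formal substitutions $a \mapsto a_{\le 1} + a_{> 1}$. The key claim will be that one may choose a splitting of $\mu_i'(Q', W)$ whose realizing substitution preserves the subspace of paths in $Q$. Indeed, every $2$-cycle in $\mu_i'(Q')$ with both endpoints in $Q_0$ is, by fullness of $Q$, a $2$-cycle in $\mu_i'(Q)$ built from arrows of $Q_1$. The inductive construction in the proof of \cite[Thm.4.6]{DWZ08} can then be run block-by-block, eliminating $2$-cycles lying in $Q$ by substitutions valued in path series within $Q$, and eliminating $2$-cycles using an arrow of $Q' \setminus Q$ by substitutions involving those extra arrows. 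The resulting substitution is block-triangular with respect to the $R$-bimodule decomposition $T_{Q'} = T_Q \oplus T_{Q' \setminus Q}$, and restricting it to $Q$ yields a splitting of $\mu_i'(Q, W|_Q) = \mu_i'(Q', W)|_Q$. This exhibits $\mu_i(Q', W)|_Q$ and $\mu_i(Q, W|_Q)$ as isomorphic reduced QPs.

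The hard part will be certifying this block-triangularity. It rests on the observation that $\partial/\partial a$ applied to cycles lying in $Q$, with $a \in Q_1$, produces paths in $Q$, so the recursion governing the elimination of a $Q$-internal $2$-cycle closes up within $Q$ at each stage of the induction; the analogous statement for cycles involving an arrow of $Q' \setminus Q$ is immediate. Once this is secured, the lemma follows.
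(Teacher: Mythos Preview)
Your handling of the premutation step matches the paper, and factoring $\mu_i$ as reduction composed with premutation is exactly the right plan. The divergence, and the weak point, is in the reduction step.

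The paper does not attempt to construct a block-triangular splitting. It takes an \emph{arbitrary} formal automorphism $\psi$ of $\widehat{\Cp\mu_i'(Q')}$ of the form $a\mapsto a+a_{>1}$ realising the splitting $\mu_i'(W)\mapsto \mu_i'(W)_{\mathrm{triv}}+\mu_i'(W)_{\mathrm{red}}$, and simply restricts it: for $a$ an arrow of $\mu_i'(Q)$ set $\psi_Q(a)=a+a_{>1}|_Q$. The (implicit) point is that restriction is the ring homomorphism $\widehat{\Cp\mu_i'(Q')}\to\widehat{\Cp\mu_i'(Q)}$ killing the idempotents $e_j$ with $j\notin Q_0$; since any path parallel to an arrow outside $Q$ has an endpoint outside $Q_0$ (by fullness) and hence restricts to zero, one gets $(\psi(\,\cdot\,))|_Q=\psi_Q((\,\cdot\,)|_Q)$ on cyclic words. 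Applying this to $\mu_i'(W)$ yields $\psi_Q(\mu_i'(W|_Q))=\mu_i'(W)_{\mathrm{triv}}|_Q+\mu_i'(W)_{\mathrm{red}}|_Q$, visibly a trivial plus reduced decomposition on $\mu_i'(Q)$. No special choice of $\psi$ is required.

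Your block-triangularity argument, by contrast, has a gap. The observation in your final paragraph---that $\partial/\partial a$ applied to cycles lying in $Q$ produces paths in $Q$---is correct but not sufficient: an arrow $a\in Q_1$ can also occur in cycles of the potential that leave $Q$. For instance, take $Q_0=\{1,2\}$, $Q'_0=\{1,2,3\}$, with $a\colon 1\to2$, $b\colon 2\to1$ in $Q$ and $c\colon 2\to3$, $d\colon 3\to1$ outside; the cycle $badca$ contains $a$, and $\partial(badca)/\partial a=dcab+badc$ lies outside $\widehat{\Cp Q}$. Such terms contribute to $\partial W/\partial a$, so the substitutions prescribed by the DWZ recursion for eliminating a $Q$-internal $2$-cycle do not in general take values in $\widehat{\Cp Q}$. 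You could try using only the $Q$-part of the derivative, but that is no longer the DWZ algorithm and would need its own correctness argument. The paper's restriction trick sidesteps this entirely.
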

\begin{proof}
Consider \[\mu_s'(W)=W_i+\sum_{\substack{a,b\in Q'\\s(a)=t(b)=i}}[ab]\overline{b}\overline{a}\]as in (\ref{W'def}), then
\[
W_i|_{Q}=(W|_{Q})_i
\]
and
\[
\sum_{\substack{a,b\in Q'\\s(a)=t(b)=i}}([ab]\overline{b}\overline{a})|_{Q}=\sum_{\substack{a,b\in Q\\s(a)=t(b)=i}}[ab]\overline{b}\overline{a}
\]
so that $\mu_s'(W)|_{Q}=\mu_s'(W|_{Q})$. By the Splitting Theorem~\cite[Thm.4.6]{DWZ08} there is a formal automorphism of $\Cp Q'$ sending each arrow $a\mapsto a+a_{>1}$, where $a_{>1}$ is a formal linear combination of paths of length at least two from $s(a)$ to $t(a)$, transforming $\mu_s'(W)$ to $\mu_s'(W)_{\text{triv}}+\mu_s'(W)_{\text{red}}$. We define a formal automorphism of $\Cp Q$ by sending $a\mapsto a_{\leq 1}+a_{>1}|_{Q}$.  It then follows that $\mu_s'(W)_{\text{red}}|_{Q}=(\mu_s'(W)|_{Q})_{\text{red}}$.
\end{proof}

For a QP $(Q,W)$  we define a two-term complex $\overline{\CC}_\bullet(Q,W)$ as follows. Let
\[\overline{\CC}_1(Q,W):=\widehat{\Cp {Q}}_{\text{cyc},\geq 1},\] 
be the space of formal linear combinations of cyclic paths of length at least one. Define also
\[
\overline{\CC}_2(Q,W):=\bigoplus_{a\in Q_1}e_{t(a)}(\widehat{\Cp Q}_{\geq 1}) e_{s(a)} \frac{\partial}{\partial a}.
\]
The map in the complex is defined by $W$ as 
\[\begin{array}{rcccl} d_{Q,W} & \colon & \overline{\CC}_2(Q,W) & \rightarrow & \overline{\CC}_1(Q,W)\\
\\
& & \displaystyle r\frac{\partial}{\partial a} & \mapsto & \displaystyle\sum_{W=uat}urt
\end{array}\]
for $r$ a path from $s(a)$ to $t(a)$.

We define $\overline{\HC}_1(Q,W)^*$ to be the cokernel of the map $d_{Q,W}$; it is the space of formal deformations of the potential $W$, modulo the infinitesimal action of the group of formal automorphisms of~$\widehat{\Cp Q}$.  The notation comes from the fact that this is the dual of the reduced cyclic homology of the Koszul dual category to $\hat{\Gamma}(Q,W)$, defined in Section \ref{Categorification_section}, which we remark is the space of first order deformations of this category as a strictly unital cyclic $A_{\infty}$ category with fixed CY pairing.

\begin{proposition}
\label{adding_trees}
Let $(Q,W)$ be a QP, with $W$ nondegenerate with respect to the sequence of vertices $\mathbf{s}$.  Let $Q'$ be obtained from $Q$ by first taking the union of $Q$ with a finite set of acyclic quivers $T_r$, for $r\in S$ a finite set, and then for each $r$ identifying one of the vertices of $T_r$ with one of the vertices of $Q$.  Then the QP $(Q',W)$ is nondegenerate with respect to $\mathbf{s}$, considered as a sequence of vertices of $Q'$.
\end{proposition}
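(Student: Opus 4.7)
The plan is to proceed by induction on the length $k$ of $\mathbf{s}=(s_1,\ldots,s_k)$, establishing at each stage not only that $Q'^{(j)}:=\mu_{s_j}\cdots\mu_{s_1}(Q',W)$ has no 2-cycles, but also that $Q^{(j)}:=\mu_{s_j}\cdots\mu_{s_1}(Q,W)$ sits inside $Q'^{(j)}$ as a full subquiver with $W^{(j)}=W'^{(j)}|_{Q^{(j)}}$. For the base case $k=0$, $Q'$ has no 2-cycles because $Q$ has none, each $T_r$ is acyclic, and $T_r$ meets $Q$ in a single vertex so no pair of arrows can straddle two acyclic pieces; and $W$ has no term involving a $T_r$-arrow since such a term would give a closed cyclic walk entirely in $T_r$, contradicting acyclicity.

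For the inductive step, I would exploit Lemma~\ref{lem:rest:commutes}, which says that mutation at a vertex of $Q_0$ commutes with restriction to the $Q$-subquiver; combined with the observation that any new composite arrow $[ab]$ involving a $T_r$-arrow has at least one endpoint outside $Q_0$ (since the only $Q_0$-vertex contained in any $T_r$ is its attachment vertex), the full-subquiver invariant persists, and the lemma yields
\[\mu_{s_j}(Q'^{(j-1)},W'^{(j-1)})|_{Q^{(j-1)}}=(Q^{(j)},W^{(j)}),\]
which has no 2-cycles by the nondegeneracy of $W$ on $Q$ with respect to $\mathbf{s}$. Consequently, every putative 2-cycle in $\mu_{s_j}(Q'^{(j-1)},W'^{(j-1)})$ must involve at least one vertex of $Q'_0\setminus Q_0$.

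To exclude these, I would carry out a case analysis on the pairs of antiparallel arrows of the premutation $\mu'_{s_j}(Q'^{(j-1)})$, whose arrows are of three types: those inherited from $Q'^{(j-1)}$, the reversed arrows $\bar a$, and the composite arrows $[ab]$. Any antiparallel pair involving a non-$Q_0$ vertex either yields an oriented 2-cycle already present in $Q'^{(j-1)}$ (excluded by the inductive hypothesis), or forces an oriented cycle entirely inside some $T_r$ (excluded by its acyclicity, once one uses that the endpoints of $T_r$-arrows are controlled by the attachment structure), or else appears quadratically in $\mu'_{s_j}(W'^{(j-1)})$ --- either through a cubic cycle of $W'^{(j-1)}$ via the substitution $ab\mapsto[ab]$ in $W'^{(j-1)}_{s_j}$, or through the explicit cubic term $[ab]\bar b\bar a$ in~\eqref{W'def} --- and is therefore absorbed into the trivial summand of the splitting~\eqref{splitting} via \cite[Thm.4.6]{DWZ08}, leaving no 2-cycle in $\mu_{s_j}(Q'^{(j-1)},W'^{(j-1)})$.

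The main obstacle is propagating the ``no 2-cycles involving a non-$Q_0$ vertex'' invariant across the whole mutation sequence. After a first mutation at an attachment vertex $s_0$, the clean picture of $Q$ plus disjoint acyclic trees is destroyed: new arrows bridge $Q_0$ with $T_r\setminus Q_0$ and between non-$Q_0$ vertices, and subsequent mutations can accumulate additional such arrows. The heart of the argument is to translate the acyclicity of each original $T_r$ into a property of $Q'^{(j)}$ that survives arbitrary mutations at $Q_0$-vertices, so that the case analysis of the previous paragraph applies uniformly; this relies on the involutive nature of QP mutation to ensure that any 2-cycle among non-$Q_0$ vertices introduced at some stage either appears quadratically in the premutation potential (and hence is split off) or reflects a cycle already present in some $T_r$.
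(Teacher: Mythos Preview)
Your setup and the reduction via Lemma~\ref{lem:rest:commutes} to the case of a putative 2-cycle through a vertex of $Q'_0\setminus Q_0$ are fine, and indeed this is also where the paper's argument arrives. But the remainder of your proposal does not close the gap. Your third paragraph sketches a case analysis on antiparallel pairs in the premutation, yet the cases listed are neither exhaustive nor verified: for instance, an antiparallel pair of composite arrows $[ab]$, $[cd]$ between two non-$Q_0$ vertices need not correspond to any cycle in a $T_r$, nor need it appear quadratically in $\mu'_{s_j}(W'^{(j-1)})$ in a way that lets the Splitting Theorem eliminate it. Your fourth paragraph then openly concedes that the crucial invariant (``no 2-cycle involving a non-$Q_0$ vertex'') is not shown to propagate, and appeals vaguely to ``the involutive nature of QP mutation'' without extracting a usable statement. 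As it stands, this is a description of the difficulty rather than a resolution of it.

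The paper supplies exactly the missing idea, and it is not a refinement of your case analysis but a different invariant altogether. One introduces the reduced cyclic deformation space $\overline{\HC}_1(Q',W)^*$, the cokernel of the natural map $d_{Q',W}$ from ``vector fields'' to cyclic words of length $\geq 1$. Since gluing acyclic quivers to $Q$ adds no new cycles, the restriction map $\overline{\HC}_1(Q',W)^*\to\overline{\HC}_1(Q,W)^*$ is an isomorphism. Mutation induces isomorphisms $\overline{\HC}_1(Q',W)^*\cong\overline{\HC}_1(\mu_{\mathbf{s}}(Q',W))^*$ and $\overline{\HC}_1(Q,W)^*\cong\overline{\HC}_1(\mu_{\mathbf{s}}(Q,W))^*$ (here the involutivity of mutation is used, to produce inverses), and these fit into a commutative square with the restriction maps. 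Hence the restriction $\overline{\HC}_1(\mu_{\mathbf{s}}(Q',W))^*\to\overline{\HC}_1(\mu_{\mathbf{s}}(Q,W))^*$ is an isomorphism after \emph{any} sequence $\mathbf{s}$ of $Q_0$-mutations. Now a 2-cycle through a non-$Q_0$ vertex gives a nonzero class in $\overline{\HC}_1(\mu_{\mathbf{s}}(Q',W))^*$ (it lies outside $\operatorname{Im}d$, which is contained in cycles of length $\geq 3$), yet restricts to zero, a contradiction. This deformation-theoretic invariant is precisely what replaces your unperformed case analysis.
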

\begin{proof} 
We prove the proposition by induction.  That is, we assume that $(Q',W)$ is nondegenerate with respect to $\mathbf{s}'$, for $\mathbf{s}'$ the sequence obtained from $\mathbf{s}$ by deleting the last vertex, and so we may define $\mu_{\mathbf{s}}(Q',W)$, and our job becomes to prove that the underlying quiver of $\mu_{\mathbf{s}}(Q',W)$ has no 2-cycles. 

Since gluing the $T_r$ to $Q$ introduces no new cycles, there is a natural isomorphism
\[
a\colon \overline{\HC}_1(Q',W)^*\stackrel{\sim}\longrightarrow\overline{\HC}_1(Q,W)^*.
\]
Since $W$ is non-degenerate with respect to $\mathbf{s}$, the underlying quiver of  $\mu_{\mathbf{s}}(Q,W)$ is $\mu_{\mathbf{s}}(Q)$.
Let $\mu_{\mathbf{s}}'(Q')$ denote the underlying quiver of $\mu_{\mathbf{s}}(Q',W)$; at this point we cannot assume that this is equal to $\mu_{\mathbf{s}}(Q')$.  As in \cite[Prop.7.3]{DWZ08}, there are maps $G:\widehat{\Cp Q}_{\text{cyc},\geq 1}\rightarrow \widehat{\Cp\mu_{\mathbf{s}}(Q)}_{\text{cyc},\geq 1}$ and $G':\widehat{\Cp{Q'}}_{\text{cyc},\geq 1}\rightarrow \widehat{\Cp\mu_{\mathbf{s}}'(Q')}_{\text{cyc},\geq 1}$. The map $G$ takes a potential for $Q$, nondegenerate with respect to $\mathbf{s}'$, to the corresponding potential on the underlying quiver  $\mu_{\mathbf{s}}(Q)$ of $\mu_{\mathbf{s}}(Q,W)$, sending $W$ to the potential for $\mu_{\mathbf{s}}(Q,W)$; the map $G'$ acts similarly for $Q'$. Taking derivatives at $W$, we obtain morphisms $\overline{\CC}_1(Q',W)\rightarrow\overline{\CC}_1(\mu_{\mathbf{s}}(Q',W))$ and $\overline{\CC}_1(Q,W)\rightarrow\overline{\CC}_1(\mu_{\mathbf{s}}(Q,W))$.
Quotienting by the infinitesimal actions of groups of formal automorphisms, we then obtain natural homomorphisms \[b':\overline\HC_1(Q',W)^*\stackrel{\sim}\longrightarrow \overline\HC_1(\mu_{\mathbf{s}}(Q', W))^*\] and \[b:\overline\HC_1(Q,W)^*\stackrel{\sim}\longrightarrow \overline\HC_1(\mu_{\mathbf{s}}(Q,W))^*.\]  
We next claim that these maps are isomorphisms. Indeed, their inverses are given by pushing a formal deformation of the potential $\mu_{\mathbf{s}}(W)$ along the reverse sequence of mutations.  Here we have used the fact that mutation defines an involution on (equivalence classes of) QPs \cite[Thm.5.7]{DWZ08}. Note that the only possible two-cycles in~$\mu_{\mathbf{s}}'(Q')$ appear at the last step by our induction hypothesis, so going backwards, the first mutation is not at a vertex where there may be possible two cycles. Hence the backwards maps are defined just as the forwards maps above, and they become natural inverses.

Furthermore, using the fact that we may take the potential for $\mu_{\mathbf{s}}(Q,W)$ to be the restriction to $Q$ of the potential for $\mu_{\mathbf{s}}(Q',W)$ by Lemma~\ref{lem:rest:commutes}, there is a commutative diagram
\[
\xymatrix{
\overline{\CC}_2(\mu_{\mathbf{s}}(Q',W))\ar[rr]^{d_{\mu_{\mathbf{s}}(Q',W)}}\ar[d]&&\overline{\CC}_1(\mu_{\mathbf{s}}(Q',W))\ar[d]\\
\overline{\CC}_2(\mu_{\mathbf{s}}(Q,W))\ar[rr]^{d_{\mu_{\mathbf{s}}(Q,W)}}&&\overline{\CC}_1(\mu_{\mathbf{s}}(Q,W))\\
}
\]
where the vertical arrows are given by restriction, inducing a map 
\[\textbf{res}:\overline{\HC}_1(\mu_{\mathbf{s}}(Q',W))^*\rightarrow\overline{\HC}_1(\mu_{\mathbf{s}}(Q,W))^*.\]  
The diagram 
\[
\xymatrix{
\overline{\HC}_1(Q',W)^*\ar[d]^-{a}\ar[r]^-{b'}&\overline{\HC}_1(\mu_{\mathbf{s}}(Q',W))^*\ar[d]^{\textbf{res}}\\
\overline{\HC}_1(Q,W)^*\ar[r]^-{b}&\overline{\HC}_1(\mu_{\mathbf{s}}(Q,W))^*
}
\]
commutes, from which we deduce that the restriction map $\textbf{res}$ is an isomorphism, as all the other maps are.

Now let us assume, for a contradiction, that $\mu'_{\mathbf{s}}(Q')$, the quiver underlying $\mu_{\mathbf{s}}(Q', W)$,
contains a 2-cycle. By Lemma~\ref{lem:rest:commutes}, 
the two vertices of this two-cycle cannot both be contained in $Q_0$, since $W$ is assumed nondegenerate with respect 
to $\mathbf{s}$. So the two-cycle must pass through some vertex $i$ for $i\in Q'_0\setminus Q_0$.  On the other hand, from the definition of $\overline{\HC}_1(Q',W)^*$ it is immediate that this 2-cycle represents a nonzero class, as $\text{Im}(d_{\mu_{\mathbf{s}}(Q',W)})\subset \widehat{\Cp\mu_{\mathbf{s}}'(Q)}_{\text{cyc},\geq 3}$. But this class is obviously killed by $\textbf{res}$, a contradiction. 

We deduce that $\mu_{\mathbf{s}}(Q')$ contains no 2-cycles, completing the induction step.
\end{proof}

We now describe how to pass from formal nondegenerate potentials to algebraic ones.  Consider the trivial grading on $\Cp Q$ given by path length. Denote by $W_n$ the graded piece of~$W$ corresponding to paths of length $n$, and let $W_{\leq n}:=\sum_{i\leq n} W_i$.

\begin{lemma}\cite[Prop.4.15]{DWZ08}
\label{twolemma}
Let~$Q$ be a quiver without loops, and let~$W$ be a formal potential on~$Q$.  Then the question of whether $(Q_{\mathrm{red}},W_{\mathrm{red}})$ has no 2-cycles is settled by $W_{2}$.
\end{lemma}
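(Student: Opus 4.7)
The plan is to reduce the lemma to a linear-algebraic statement about $W_2$, following the constructive proof of the Splitting Theorem \cite[Thm.4.6]{DWZ08}. Since $Q$ has no loops, every cyclic word of length two has the form $ab$ for some 2-cycle $(a,b)$ of $Q$, so $W_2$ is equivalent to a collection of bilinear pairings
\[
e_i T e_j \otimes_\C e_j T e_i \longrightarrow \C,
\]
one for each ordered pair of vertices $(i,j)$ of $Q$.

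First I would put $W_2$ into a normal form. By a $\C$-linear change of arrow basis that respects source and target, these pairings can be simultaneously diagonalized, so that after renaming we may assume
\[W_2 = \sum_{k=1}^{r} a_k b_k\]
for distinct pairs $(a_k,b_k)$ traversing 2-cycles, with all remaining arrows absent from $W_2$. Call the arrows $\{a_k,b_k\}$ \emph{paired} and the others \emph{unpaired}; this partition of $Q_1$ depends only on $W_2$.

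Next I would run the iterative procedure from the proof of the Splitting Theorem on this normal form. Each identity $\partial(a_k b_k)/\partial a_k = b_k$, and symmetrically for $b_k$, lets us absorb any higher-degree term of $W$ containing $a_k$ or $b_k$ into a formal substitution $a_k \mapsto a_k + (\text{length} \geq 2)$, $b_k \mapsto b_k + (\text{length} \geq 2)$. Iterating and passing to the limit, which converges in $\widehat{\C Q}$ because each step strictly raises the minimum degree of the residual corrections, produces a splitting $(Q,W) \cong (Q_{\mathrm{triv}}, W_{\mathrm{triv}}) \oplus (Q_{\mathrm{red}}, W_{\mathrm{red}})$ in which $Q_{\mathrm{triv}}$ consists exactly of the paired arrows, with potential $\sum a_k b_k$, and $Q_{\mathrm{red}}$ consists exactly of the unpaired arrows.

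With this form of the splitting, $Q_{\mathrm{red}}$ contains a 2-cycle if and only if some 2-cycle $(a,b)$ of $Q$ has both $a$ and $b$ unpaired, a condition depending on $W_2$ alone. The main technical obstacle is ensuring the iterative substitution converges and produces the splitting in the stated form; this is precisely the content of \cite[Thm.4.6]{DWZ08}, so modulo citing that theorem the argument reduces to linear algebra on the quadratic form $W_2$.
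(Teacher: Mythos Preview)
The paper does not supply its own proof here; it simply records the statement and cites \cite[Prop.4.15]{DWZ08}. Your argument is correct and is essentially the argument behind that proposition: put $W_2$ into the normal form $\sum_k a_k b_k$ by a linear change of arrow basis (the pairings $e_iTe_j\otimes e_jTe_i\to\C$ can always be brought to the form $\begin{pmatrix}I_r&0\\0&0\end{pmatrix}$), then run the constructive proof of the Splitting Theorem to see that $Q_{\mathrm{red}}$ consists precisely of the unpaired arrows, so whether $Q_{\mathrm{red}}$ retains a 2-cycle is decided entirely by the ranks of these bilinear forms, i.e.\ by $W_2$.
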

\begin{lemma}
\label{trunclemma}
Let $Q$ be a quiver, and let $\mathbf{s}$ be a sequence of vertices of $Q$.  For each $n\in \mathbb{N}$ there exists $m\in \mathbb{N}$ such that for every formal potential $W$ on $Q$, which is nondegenerate with respect to $\mathbf{s}$, the value of $\mu_s(W)_{\leq n}$, up to formal automorphism, is determined by $W_{\leq m}$.
\end{lemma}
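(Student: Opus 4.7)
The plan is to induct on the length $t$ of the sequence $\mathbf{s}=(s_1,\ldots,s_t)$. The base case $t=0$ is trivial (take $m=n$). For the inductive step, apply the inductive hypothesis on the mutated quiver $\mu_{s_1}(Q)$ to the shorter sequence $(s_2,\ldots,s_t)$: there exists $m'$ such that $\mu_{\mathbf{s}}(W)_{\leq n}$, up to formal automorphism, is determined by $\mu_{s_1}(W)_{\leq m'}$. It therefore suffices to establish a single-mutation version of the lemma: for every vertex $s$ and every $n'\in\mathbb{N}$ there exists $m\in\mathbb{N}$ such that $\mu_s(W)_{\leq n'}$ is determined up to formal automorphism by $W_{\leq m}$.

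I would handle the single-mutation statement in two steps, corresponding to the two phases of $\mu_s$: the premutation $\mu_s'$ and the reduction. The premutation is combinatorially transparent: on each cyclic word it replaces every composable pair $ab$ with $s(a)=t(b)=s$ by a single arrow $[ab]$, shortening the word by one, and then adjoins the fixed universal quadratic term $\sum_{s(a)=t(b)=s}[ab]\overline{b}\overline{a}$. Since a cycle of length $k$ can visit the vertex $s$ at most $\lfloor k/2\rfloor$ times, its image has length $\geq \lceil k/2\rceil$, so $\mu_s'(W)_{\leq N}$ is completely determined by $W_{\leq 2N}$ (together with the universal term).

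The substantive step is the reduction. What I would prove is the following continuity property of the Splitting Theorem \cite[Thm.4.6]{DWZ08}: for every $n'\in\mathbb{N}$ there exists $N=N(n')\in\mathbb{N}$ such that whenever two potentials $V,V'$ on the same quiver agree modulo paths of length $>N$, their reduced parts $V_{\mathrm{red}}$ and $V'_{\mathrm{red}}$ agree modulo paths of length $>n'$, up to formal automorphism. Combined with the premutation bound, this yields the desired constant, e.g.\ $m=2N(n')$, for the single-mutation case, and hence completes the inductive step.

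The main obstacle is the continuity claim for the Splitting Theorem. The proof in \cite{DWZ08} constructs the reducing automorphism as an infinite composition of substitutions $a\mapsto a-\eta_{a,k}$, with $\eta_{a,k}\in\widehat{\Cp Q}$ a combination of paths of length $\geq k+1$, chosen at stage $k$ to kill the degree-$(k+2)$ contribution to the trivial summand. Two features of this procedure drive the argument. First, a substitution at stage $k$ alters the potential only in degrees $\geq k+2$, so only the stages $k\leq n'-1$ can influence the length-$\leq n'$ truncation; second, the $\eta_{a,k}$ are themselves determined by the current potential truncated at a length depending linearly on $k$. Bookkeeping these bounds over the finitely many relevant stages produces an explicit $N(n')$, while Lemma~\ref{twolemma} ensures that the nondegeneracy required for the algorithm to proceed is insensitive to truncation above the quadratic part. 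Once the continuity of $V\mapsto V_{\mathrm{red}}$ in the path-length-adic topology is in hand, combining with the premutation bound closes the induction.
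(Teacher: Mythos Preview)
Your approach is correct and matches the paper's, which is essentially a one-sentence proof using the same two ingredients: the combinatorial bound for premutation, and the observation that for every formal automorphism $\psi$ one has $\psi(W)_{\leq t}$ determined by $W_{\leq t}$. The paper then simply asserts $m\geq (3/2)^t n$ suffices, where $t$ is the length of $\mathbf{s}$.

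The only difference is that you work harder on the reduction step than necessary. You frame it as a separate ``continuity of the Splitting Theorem'' result and propose to track the stages of the Derksen--Weyman--Zelevinsky algorithm. But since the conclusion is only required \emph{up to formal automorphism}, and since each substitution in the reduction algorithm raises degree, one may simply choose the reducing automorphism based only on the truncated potential (running the algorithm only through the finitely many relevant stages). The paper's single observation that formal automorphisms never lower path-length already encodes this, so reduction contributes no increase to the bound at all; your function $N(n')$ can be taken to be the identity, and all growth comes from premutation. Your bookkeeping would of course recover this, but it is not needed.
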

\begin{proof}
Indeed from the description of $W'$ from (\ref{W'def}), and the fact that for every formal automorphism $\psi:\widehat{\Cp Q}\rightarrow \widehat{\Cp Q}$, $\psi(W)_{\leq t}$ is determined by $W_{\leq t}$, we deduce that it is enough to assume $m\geq (3/2)^t\cdot n$, where $t$ is the length of $\mathbf{s}$.
\end{proof}
Putting these lemmas together, we deduce that the condition of being nondegenerate with respect to a fixed sequence $\mathbf{s}$ is an open condition in the $\Cp Q_{\geq 1}$-adic topology, where $\Cp Q_{\geq 1}$ is the ideal generated by paths of length at least one.  

Assume now that $(Q,W)$ is a graded QP (with respect to an arbitrary grading of the arrows). 
Then for any vertex $s$, the premutated QP $\mu_s'(Q,W)$ is also graded, after introducing the following grading conventions (\cite[Def.6.4]{AO10}) on mutated quivers: 
\begin{itemize}
\item $|[ab]|=|a|+|b|$,
\item $|\overline{a}|=|W|-|a|$ if $s(a)=s$, 
\item $|\overline{a}|=-|a|$ if $t(a)=s$.
\end{itemize}
As proved in \cite[Thm.6.6]{AO10}, the process of passing from a QP to a reduced QP preserves any $\mathbb{Z}$-grading, and so we may define the mutation of a graded QP in the same way. 

We finally obtain the corollary that enables us to use \textit{algebraic} potentials in all our applications. 

\begin{corollary}
\label{formaltoalg}
Let $(Q,W)$ be a QP, and assume that the formal potential~$W$ is nondegenerate with respect to the sequence of vertices $\mathbf{s}$.  Then there exists an $n\in \mathbb{N}$ such that $W_{\leq n}$ is nondegenerate with respect to $\mathbf{s}$ too.  In particular, if~$Q$ has a nondegenerate graded potential $W$, $Q^{\diamondsuit}$ is mutation equivalent to~$Q$, and $\mathbf{s}$ is a sequence of vertices of $Q^{\diamondsuit}$, then there exists an algebraic graded potential~$W^{\diamondsuit}$ for $Q^{\diamondsuit}$ that is nondegenerate with respect to $\mathbf{s}$.
\end{corollary}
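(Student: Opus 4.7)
The plan is to prove the two assertions in turn, using the openness interpretation of nondegeneracy outlined in the paragraph above the statement. For the first assertion, write $\mathbf{s} = (s_1,\ldots,s_t)$ and consider the successive QPs $(Q^{(i)},W^{(i)}) := \mu_{s_i}\cdots\mu_{s_1}(Q,W)$, well defined up to formal substitution by the hypothesis on $W$. Nondegeneracy with respect to $\mathbf{s}$ amounts to asking that at each step the resulting quiver $Q^{(i)}$ contains no 2-cycle; by Lemma~\ref{twolemma}, this is detected solely by the path-length-$2$ part of $\mu'_{s_i}(W^{(i-1)})$. Iterating Lemma~\ref{trunclemma} $t$ times, one finds a single integer $n$ such that $W_{\leq n}$ determines, up to formal automorphism, every path-length-$2$ part of the successive mutated potentials $\mu_{s_i}\cdots\mu_{s_1}(W)$. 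Replacing $W$ by $W_{\leq n}$ leaves all of the relevant 2-cycle tests unchanged, so $W_{\leq n}$ is algebraic and still nondegenerate with respect to $\mathbf{s}$.

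For the second assertion, fix a sequence $\mathbf{t}$ of vertices of $Q$ with $Q^{\diamondsuit}=\mu_{\mathbf{t}}(Q)$, and put $W^{\diamondsuit}:=\mu_{\mathbf{t}}(W)$, which is a (possibly formal) potential on $Q^{\diamondsuit}$. By the grading conventions recalled just before the statement, together with \cite[Thm.6.6]{AO10}, the potential $W^{\diamondsuit}$ is graded of the same degree as $W$. To see that it is also nondegenerate, observe that for any sequence $\mathbf{u}$ of vertices of $Q^{\diamondsuit}$, the concatenation $(\mathbf{t},\mathbf{u})$ is a legal mutation sequence for $W$ (using that mutation is an involution up to equivalence of QPs, \cite[Thm.5.7]{DWZ08}), and mutating $W^{\diamondsuit}$ along $\mathbf{u}$ matches mutating $W$ along $(\mathbf{t},\mathbf{u})$ step by step. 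In particular $W^{\diamondsuit}$ is nondegenerate with respect to the given $\mathbf{s}$.

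Now apply the first assertion to $W^{\diamondsuit}$ and $\mathbf{s}$ to produce an integer $n$ with $W^{\diamondsuit}_{\leq n}$ nondegenerate with respect to $\mathbf{s}$. The truncation $W^{\diamondsuit}_{\leq n}$ is by definition a finite sum of cyclic paths, hence algebraic. Moreover, since the grading of the arrows of $Q^{\diamondsuit}$ is independent of the path-length filtration, each path-length component of $W^{\diamondsuit}$ is automatically homogeneous of the same arrow-degree as $W^{\diamondsuit}$, so the truncation preserves gradedness. Setting $W^{\diamondsuit}$ equal to this truncation completes the proof.

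The main conceptual obstacle I expect is the careful bookkeeping between the two gradings in play (arrow weight versus path length) across the mutation and reduction steps, and in particular verifying that path-length truncation commutes with homogeneity in the arrow grading; once this compatibility is in hand, the argument reduces to the openness statement preceding the corollary and the involutivity of mutation.
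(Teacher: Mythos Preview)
Your proposal is correct and follows essentially the same route as the paper's proof: invoke Lemmas~\ref{twolemma} and~\ref{trunclemma} to obtain the first assertion, then mutate the graded nondegenerate $W$ along a sequence realizing $Q^{\diamondsuit}$, observe that the result is still graded and nondegenerate, and truncate. Two minor remarks: the appeal to involutivity \cite[Thm.5.7]{DWZ08} is not actually needed for the concatenation step, since full nondegeneracy of $W$ already guarantees that $(\mathbf{t},\mathbf{u})$ is a legal sequence and hence that $W^{\diamondsuit}$ is nondegenerate with respect to $\mathbf{u}$; and Lemma~\ref{trunclemma} is already stated for a full sequence, so no separate iteration is required. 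Your explicit check that path-length truncation preserves homogeneity in the arrow grading is a point the paper leaves implicit.
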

\begin{proof}
For the first assertion, we may pick $n=\lceil 3^t/2^{t-1}\rceil$, where $t$ is the length of $\mathbf{s}$, by Lemmas~\ref{twolemma} and~\ref{trunclemma}.  For the last assertion, let $\mathbf{r}$ be a sequence of vertices of $Q$ such that $\mu_{\mathbf{r}}(Q)=Q^{\diamondsuit}$.  Then $\mu_{\mathbf{r}}(Q,W)$ is a graded QP, with nondegenerate graded potential $W^{\diamondsuit}$, and underlying graded quiver $Q^{\diamondsuit}$.  We then use the first assertion to truncate the potential $W^{\diamondsuit}$ to an algebraic graded potential, nondegenerate with respect to the sequence $\mathbf{s}$.
\end{proof}

\subsection{Categorification}
\label{Categorification_section}
We now to turn to categorification of quantum cluster mutation.
Given an algebraic QP $(Q,W)$, recall the Jacobi algebra $J(Q,W)$ defined in Subsection~\ref{CohoDT}. In \cite[Sec.5]{Gin07}, Ginzburg defines $\Gamma(Q,W)$, a nonpositively graded dg algebra as follows.  The underlying graded algebra is given by $\Cp\tilde{Q}$, where $\tilde{Q}$ is a graded quiver constructed in the following way. We first identify the vertices of $\tilde{Q}$ with those of $Q$.  The degree zero arrows of $\tilde{Q}$ are identified with those of $Q$.  In degree $-1$, for each arrow $a$ in~$Q$ from $i$ to $j$, we add an arrow $a^*$ from $j$ to $i$ in $\tilde{Q}$.  Finally, for each vertex $i$ of~$Q$ we add a loop $\omega_i$ to $\tilde{Q}$, in degree $-2$, based at $i$.  The differential $d$ of $\Gamma(Q,W)$ is a derivation, defined on generators by
\begin{align*}
d(a)=&\ 0,\\
d(a^*)=&\ \frac{\partial W}{\partial a},\\
d(\omega_i)=&\sum_{a\in Q_1}e_i[a,a^*]e_i.
\end{align*}
By definition, we have that $\Ho^0(\Gamma(Q,W))\cong J(Q,W)$.  If $(Q,W)$ is instead a graded algebraic QP, there is a natural bigrading on $\Gamma(Q,W)$ such that the differential is of degree $(0,1)$, and the zeroth homology, with respect to the second grading, is the graded Jacobi algebra $J(Q,W)$.  

Completing the underlying graded algebra of $\Gamma(Q,W)$ with respect to the number of arrows of~$Q$ occurring in a path, and imposing the same differential $d$, we obtain the completed Ginzburg algebra $\hat{\Gamma}(Q,W)$, with $\Ho^0(\hat{\Gamma}(Q,W))=\hat{J}(Q,W)$.  If we consider the completed path algebra $\widehat{\C{Q}}$, $\hat{J}(Q,W)$ is the quotient by the closure of the ideal generated by the noncommutative derivatives of $W$. Note that in contrast with the case that~$W$ is an algebraic potential, $\hat{\Gamma}(Q,W)$ and $\hat{J}(Q,W)$ are well-defined for a non-algebraic $(Q,W)$.
\smallbreak
For an algebra $A$ we denote by $A\mathrm{-Mod}$ the category of right modules for $A$, and by $A\mathrm{-mod}$ the category of finite dimensional right $A$ modules.  Similarly for $\Gamma$ a dg algebra we denote by $\Gamma\mathrm{-Mod}$ the category of right dg modules for $\Gamma$, and by $\Gamma\mathrm{-mod}$ the category of right dg modules for $\Gamma$ with finite dimensional total homology.  

\smallbreak
We now recall the details of the connection between the above categorification and quantum cluster mutation.  Recall that $\Perf(\hat{\Gamma}(Q,W))$ is the smallest strictly full subcategory of the derived category of $\hat{\Gamma}(Q,W)\text{-Mod}$ containing the modules $e_i\hat{\Gamma}(Q,W)$, for $i\in Q_0$, stable under shifts, extensions, and direct summands. Given a QP $(Q,W)$ which is nondegenerate with respect to the sequence $\mathbf{s}=(s_1,\ldots,s_t)$, and an arbitrary sequence of signs $\epsilon$ of length $t$, it is proved in \cite[Thm.3.2, Rem.3.3]{KY11} that there is a quasi-equivalence of dg categories
\[
\Phi_{\mathbf{s},\epsilon}:\Perf(\hat{\Gamma}(Q,W)\mathrm{-Mod})\xrightarrow{\sim} \Perf(\hat{\Gamma}(\mu_{\mathbf{s}}(Q,W))\mathrm{-Mod})
\]
between the dg categories of perfect dg modules for the respective (completed) Ginzburg dg algebras, which restricts to a quasi-equivalence
\[
\Phi_{\mathbf{s},\epsilon}:\hat{\Gamma}(Q,W)\textrm{--mod}\xrightarrow{\sim}\hat{\Gamma}(\mu_{\mathbf{s}}(Q,W))\textrm{--mod}
\]
between the dg categories of dg modules with finite dimensional total cohomology.

  Now, following \cite{Nag13} we recursively define a torsion structure $T_{\mathbf{s}}$ on the abelian category $\hat{J}(Q,W)\text{-mod}$, and recursively define a choice $\epsilon_{\mathbf{s}}$ for $\epsilon$.  Denote by $\mathbf{s}_{\leq q}$ the sequence $(s_1,\ldots,s_q)$.  Then $T_{\mathbf{s}_{\leq 0}}$ is the full subcategory containing the zero module.  Denote by $S_{\mathbf{s}_{\leq q},i}$ the 1-dimensional simple $\hat{J}(\mu_{\mathbf{s}_{\leq q}}(Q,W))$-module concentrated at the $i$th vertex, and denote by $S_{\mathbf{s},q}$ the module $\Phi^{-1}_{\mathbf{s}_{\leq q-1},\epsilon_{\epsilon_{q-1}}}(S_{\mathbf{s}_{\leq q-1},\mathbf{s}_q})$.  Recall that for two subcategories $\mathcal{C}_1$ and $\mathcal{C}_2$ of an abelian category $\mathcal{C}$, $\mathcal{C}_1\star \mathcal{C}_2$ is defined to be the full subcategory of $\mathcal{C}$ containing objects $M$ that fit inside 
short exact sequences $M_1\rightarrow M\rightarrow M_2$ for $M_i\in\mathcal{C}_i$, $i\in \{1,2\}$.  For the recursive step, we define 
\begin{align}
\label{recdef1}
T_{\mathbf{s}_{\leq q}}=&\begin{cases} T_{\mathbf{s}_{\leq q-1}}\star S_{\mathbf{s},q}^{\oplus} &\text{if } S_{\mathbf{s},q}\notin T_{\mathbf{s}_{\leq q-1}},\\ 
T_{\mathbf{s}_{\leq q-1}}\cap  (^{\perp}S_{\mathbf{s},q}) &\text{otherwise.} \end{cases}
\end{align}
where $S_{\mathbf{s},q}^{\oplus}$ is the full subcategory of $\hat{J}(Q,W)\text{-mod}$ containing the objects $S_{\mathbf{s},q}^{\oplus n}$ for $n\in\mathbb{N}$.  In addition, we let $\epsilon_{\leq q}$ be obtained from $\epsilon_{\leq q-1}$ by adding a $+$ to the end in the first case of (\ref{recdef1}), and a $-$ otherwise.  Define $F_{\mathbf{s}}=T_{\mathbf{s}}^{\perp}\subset\hat{J}(Q,W)\mathrm{-mod}$, and define the abelian category $\hat{J}(Q,W)\mathrm{-mod}^{(T_{\mathbf{s}}[-1],F_{\mathbf{s}})}$ to be the full abelian subcategory of the derived category of $\hat{J}(Q,W)$-modules consisting of objects $M$ such that $\Ho^{1}(M)\in T_{\mathbf{s}}$, $\Ho^0(M)\in F_{\mathbf{s}}$ and $\Ho^i(M)=0$ for $i\neq 0,1$.
\begin{proposition}\cite[Thm.3.4]{Nag13}
There is an equality
\[
\Phi_{\mathbf{s},\epsilon_{\leq t}}(\hat{J}(Q,W)\mathrm{-mod}^{(T_{\mathbf{s}}[-1],F_{\mathbf{s}})})=\hat{J}(\mu_{\mathbf{s}}(Q,W))\mathrm{-mod}.
\]
\end{proposition}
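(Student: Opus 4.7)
The proof proceeds by induction on the length $t$ of the sequence $\mathbf{s}$. The base case $t=0$ is trivial: $T_\emptyset$ is the zero subcategory, $F_\emptyset$ equals all of $\hat{J}(Q,W)\mathrm{-mod}$, the category $\hat{J}(Q,W)\mathrm{-mod}^{(T_\emptyset[-1],F_\emptyset)}$ is the standard heart, and $\Phi_\emptyset$ is the identity.

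For the inductive step, set $\mathbf{s}'=\mathbf{s}_{\leq t-1}$, $Q'=\mu_{\mathbf{s}'}(Q,W)$, and let $S'$ be the simple $\hat{J}(Q')$-module at the vertex $s_t$, so that $S_{\mathbf{s},t}=\Phi_{\mathbf{s}',\epsilon_{\leq t-1}}^{-1}(S')$ by construction. The key input is the single-mutation case due to Keller--Yang, which produces the quasi-equivalence $\Phi_{s_t,\pm}\colon \Perf(\hat{\Gamma}(Q'))\to\Perf(\hat{\Gamma}(\mu_{s_t}(Q')))$ together with an explicit identification, on the target side, of the standard heart $\hat{J}(\mu_{s_t}(Q'))\mathrm{-mod}$ with a tilt of the standard heart $\hat{J}(Q')\mathrm{-mod}$ at the simple $S'$: taking $\epsilon=+$ performs a right tilt at the torsion pair $(\langle S'\rangle, (S')^{\perp})$, while $\epsilon=-$ performs a left tilt at $({}^{\perp}S',\langle S'\rangle)$. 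In both cases the mutated standard heart is of the form $\hat{J}(Q')\mathrm{-mod}^{(A[-1],B)}$ for the appropriate torsion pair $(A,B)$ generated (or co-generated) by $S'$.

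Applying $\Phi_{\mathbf{s}',\epsilon_{\leq t-1}}^{-1}$ to this equality, and invoking the inductive hypothesis that $\Phi_{\mathbf{s}',\epsilon_{\leq t-1}}$ identifies $\hat{J}(Q,W)\mathrm{-mod}^{(T_{\mathbf{s}'}[-1],F_{\mathbf{s}'})}$ with the standard heart of $Q'$, reduces the claim to a purely internal statement in $\Db{\hat{J}(Q,W)\mathrm{-mod}}$: the tilt of $\hat{J}(Q,W)\mathrm{-mod}^{(T_{\mathbf{s}'}[-1],F_{\mathbf{s}'})}$ at the simple $S_{\mathbf{s},t}$, taken in the direction prescribed by $\epsilon_t$, equals $\hat{J}(Q,W)\mathrm{-mod}^{(T_{\mathbf{s}}[-1],F_{\mathbf{s}})}$. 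This is a direct verification against the recursion (\ref{recdef1}): when $S_{\mathbf{s},t}\notin T_{\mathbf{s}'}$ one has $\epsilon_t=+$, and a right tilt adjoins $S_{\mathbf{s},t}^{\oplus}$ to the torsion class to give $T_{\mathbf{s}'}\star S_{\mathbf{s},t}^{\oplus}=T_{\mathbf{s}}$; in the complementary case $\epsilon_t=-$, and a left tilt strips $S_{\mathbf{s},t}$ out, yielding $T_{\mathbf{s}'}\cap{}^{\perp}S_{\mathbf{s},t}=T_{\mathbf{s}}$.

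The principal obstacle is not the inductive bookkeeping, which is essentially the routine manipulation of torsion pairs and sign conventions described above, but rather the single-mutation identification of hearts due to Keller--Yang. That input rests on their explicit description of $\Phi_{s,\pm}$ as derived tensor product with a specific mutation bimodule over $\hat{\Gamma}(Q,W)$, together with a careful computation showing that the indecomposable projective generators of the mutated Ginzburg dg algebra, when transported to $\Perf(\hat{\Gamma}(Q,W))$, sit in the claimed tilted heart. Once this ingredient is accepted, the inductive step above goes through by the direct torsion-theoretic verification already sketched.
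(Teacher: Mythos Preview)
The paper does not give its own proof of this proposition: it is stated as a citation of \cite[Thm.3.4]{Nag13} and used as a black box. So there is no ``paper's proof'' to compare against here; your proposal supplies an argument where the authors simply invoke Nagao's result.

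That said, your outline is essentially the correct strategy and matches the shape of Nagao's argument: induction on the length of $\mathbf{s}$, with the single-step case provided by the Keller--Yang equivalence \cite{KY11}, which identifies mutation at a vertex with a tilt of the standard heart at the corresponding simple. The recursive definition (\ref{recdef1}) of $T_{\mathbf{s}}$ is precisely set up so that the iterated tilt tracks the iterated mutation, and your case analysis on whether $S_{\mathbf{s},t}\in T_{\mathbf{s}'}$ is the right bookkeeping. One point to be careful about: the verification that tilting the already-tilted heart $\hat{J}(Q,W)\mathrm{-mod}^{(T_{\mathbf{s}'}[-1],F_{\mathbf{s}'})}$ at $S_{\mathbf{s},t}$ really produces the heart associated to $T_{\mathbf{s}}$ requires knowing that the resulting tilt of a tilt is again a tilt of the \emph{original} standard heart, with torsion class given exactly by the formula in (\ref{recdef1}). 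This ``transitivity of tilting'' is not entirely formal---one needs that $S_{\mathbf{s},t}$ is simple in the intermediate heart and that the resulting torsion class on $\hat{J}(Q,W)\mathrm{-mod}$ has the stated description---but it is handled in Nagao's paper and your sketch correctly flags the Keller--Yang input as the substantive ingredient.
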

In order to alleviate the notation a little, from now on we denote $\Phi_{\mathbf{s},\epsilon_{\leq t}}$ by $\Phi_{\mathbf{s}}$.  Strictly speaking, the above identity is only true after passing to the quasi-essential image of each side.  Subject to the same proviso, the following corollary follows trivially.  
\begin{corollary}
\label{Totherway}
The full subcategories $T'_{\mathbf{s}}=\Phi_{\mathbf{s}}(F_{\mathbf{s}})$ and $F'_{\mathbf{s}}=\Phi_{\mathbf{s}}(T_{\mathbf{s}}[-1])$ define a torsion structure on $\hat{J}(\mu_{\mathbf{s}}(Q,W))$, and we have
\[
\Phi_{\mathbf{s}}(\hat{J}(Q,W)\mathrm{-mod})=\hat{J}(\mu_{\mathbf{s}}(Q,W))\mathrm{-mod}^{(T'_{\mathbf{s}}[-1],F'_{\mathbf{s}})}[1].
\]
\end{corollary}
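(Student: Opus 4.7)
The plan is to verify the corollary as a formal consequence of the preceding proposition combined with Happel--Reiten--Smal\o\ tilting theory. Let $\mathcal{A} := \hat{J}(Q,W)\mathrm{-mod}$ and $\mathcal{B} := \hat{J}(\mu_{\mathbf{s}}(Q,W))\mathrm{-mod}$. The proposition identifies $\mathcal{B}$, via $\Phi_{\mathbf{s}}$, with the tilted heart $\mathcal{A}^\sharp := \mathcal{A}^{(T_{\mathbf{s}}[-1], F_{\mathbf{s}})}$ inside the common derived category $\mathcal{D}$, and everything below takes place in $\mathcal{D}$.

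For the torsion pair claim, I would first observe that any $Y \in \mathcal{A}^\sharp$ sits in a canonical triangle $H^0_\mathcal{A}(Y) \to Y \to H^1_\mathcal{A}(Y)[-1]$ coming from truncation with respect to the original $t$-structure, with outer terms in $F_{\mathbf{s}}$ and $T_{\mathbf{s}}[-1]$ respectively---both lying in $\mathcal{A}^\sharp$. This yields a short exact sequence in $\mathcal{A}^\sharp$ exhibiting $F_{\mathbf{s}}$ as torsion and $T_{\mathbf{s}}[-1]$ as torsion-free; the vanishing $\Hom_{\mathcal{D}}(F_{\mathbf{s}}, T_{\mathbf{s}}[-1]) = \Ext^{-1}_{\mathcal{A}}(F_{\mathbf{s}}, T_{\mathbf{s}}) = 0$ then confirms the torsion pair axiom. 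Transporting along the triangulated equivalence $\Phi_{\mathbf{s}}$ delivers the pair $(T'_{\mathbf{s}}, F'_{\mathbf{s}}) = (\Phi_{\mathbf{s}}(F_{\mathbf{s}}), \Phi_{\mathbf{s}}(T_{\mathbf{s}}[-1]))$ as a torsion structure on $\mathcal{B}$.

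For the identification $\Phi_{\mathbf{s}}(\mathcal{A}) = \mathcal{B}^{(T'_{\mathbf{s}}[-1], F'_{\mathbf{s}})}[1]$, the content is HRS reciprocity: tilting $\mathcal{A}^\sharp$ by $(F_{\mathbf{s}}, T_{\mathbf{s}}[-1])$ returns $\mathcal{A}[1]$. I would verify this directly. Given $X \in \mathcal{A}$ with torsion decomposition $0 \to t(X) \to X \to f(X) \to 0$ for $(T_{\mathbf{s}}, F_{\mathbf{s}})$, apply the triangulated functor $\Phi_{\mathbf{s}}$ to the associated triangle. Then $\Phi_{\mathbf{s}}(f(X)) \in T'_{\mathbf{s}} \subset \mathcal{B}$ is concentrated in $\mathcal{B}$-degree $0$, while $\Phi_{\mathbf{s}}(t(X)) = \Phi_{\mathbf{s}}(t(X)[-1])[1]$ with $\Phi_{\mathbf{s}}(t(X)[-1]) \in F'_{\mathbf{s}}$ sits in $\mathcal{B}$-degree $-1$; the long exact sequence of $\mathcal{B}$-cohomology gives $H^{-1}_\mathcal{B}(\Phi_{\mathbf{s}}(X)) \in F'_{\mathbf{s}}$, $H^0_\mathcal{B}(\Phi_{\mathbf{s}}(X)) \in T'_{\mathbf{s}}$, and vanishing otherwise. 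Equivalently, $\Phi_{\mathbf{s}}(X)[-1] \in \mathcal{B}^{(T'_{\mathbf{s}}[-1], F'_{\mathbf{s}})}$, proving one inclusion. The reverse inclusion follows by the symmetric argument, invoking the quasi-inverse $\Phi_{\mathbf{s}}^{-1}$ and the fact, from \cite[Thm.5.7]{DWZ08}, that mutation of QPs is an involution up to equivalence.

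The main obstacle is nothing more than careful bookkeeping of which of the two $t$-structures on $\mathcal{D}$ one is computing cohomology with respect to at each step; there is no deeper content, which is why the authors describe the result as following trivially.
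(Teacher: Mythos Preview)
Your proposal is correct and takes essentially the same approach as the paper, which gives no proof beyond declaring that the corollary ``follows trivially'' from the preceding proposition; you have simply spelled out the standard Happel--Reiten--Smal\o\ reciprocity argument that the paper leaves implicit. One minor remark: the reverse inclusion does not actually require the DWZ involution result---it follows from the same HRS reciprocity (tilting $\mathcal{A}^\sharp$ at the pair $(F_{\mathbf{s}},T_{\mathbf{s}}[-1])$ recovers $\mathcal{A}[1]$) already established in your second paragraph, transported along $\Phi_{\mathbf{s}}$---but invoking the quasi-inverse is also fine.
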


We refer to \cite{Bri07} for the definition of a Bridgeland stability condition, and we denote by $\mathbb{H}^+\subset\Cp$ the set $\{re^{i\theta}\in\Cp\mathrm{ | }\theta\in[0,\pi),r\in\mathbb{R}_{>0}\}$.  Suppose $(Q,W)$ is an algebraic QP, and we are given a  stability condition on the derived category of finite dimensional $\hat\Gamma(Q,W)$-modules whose associated heart is the abelian category of $\hat J(Q,W)$-modules.  This stability condition extends to a stability condition on the derived category of $\Gamma(Q,W)$-modules with heart the category of $J(Q,W)$-modules by precomposing the central charge map $Z:\KK(\hat J(Q,W))\rightarrow \mathbb{H}^+$ with the composition 
\[
\KK(J(Q,W)\text{-mod})\xrightarrow{\dim_{J(Q,W)\text{-mod}}}\mathbb{Z}^{Q_0}\xrightarrow{\dim^{-1}_{\hat J(Q,W)\text{-mod}}}\KK(\hat J(Q,W)\text{-mod}).
\]
Here we have used the fact that nilpotent finite-dimensional modules are all given by repeated extensions of shifted simples $S_i$ and so in particular, their class in the Grothendieck group is determined by their dimension.
\begin{proposition}\cite[Prop.4.1]{Nag13}\label{mut_is_tilt}
Let $\mathcal{C}= J(\mu_{\mathbf{s}}(Q),0)\mathrm{-mod}$, and let $W$ be a formal potential for $Q$, nondegenerate with respect to the sequence of vertices $\mathbf{s}$.  There is a Bridgeland stability condition on the derived category of $\Gamma(\mu_{\mathbf{s}}(Q),0)$-modules with central charge map \[Z:\KK(J(\mu_{\mathbf{s}}(Q),0)\mathrm{-mod})\rightarrow\mathbb{H}^+\] and an angle $\theta_{\mathbf{s}}\in [0,\pi)$ such that 
\[\Phi_{\mathbf{s}}(T_{\mathbf{s}})[-1]=\mathcal{C}_{<\theta_{\mathbf{s}}}\cap \hat{J}(\mu_{\mathbf{s}}(Q,W))\mathrm{-mod}\] 
and 
\[\Phi_{\mathbf{s}}(F_{\mathbf{s}})=\mathcal{C}_{\geq \theta_{\mathbf{s}}}\cap \hat{J}(\mu_{\mathbf{s}}(Q,W))\mathrm{-mod}.\]
\end{proposition}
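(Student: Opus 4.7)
The plan is to follow the strategy of \cite[Prop.4.1]{Nag13}: build the Bridgeland stability condition inductively on the length $t$ of $\mathbf{s}=(s_1,\ldots,s_t)$, with each inductive step matching exactly one line of the recursive definition (\ref{recdef1}) of $T_{\mathbf{s}}$ against a tilt of the stability condition across the phase of a single simple module.

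For the base case $t=0$, one has $T_{\mathbf{s}}=0$ and $F_{\mathbf{s}}=\hat{J}(Q,W)\text{-mod}$, with $\Phi_{\mathbf{s}}$ the identity functor. Picking any central charge $Z_0:\mathbb{Z}^{Q_0}\to\mathbb{H}^+$ on $\KK(J(Q,0)\text{-mod})$ and any angle $\theta_0$ strictly smaller than the phase of every simple $S_i$ produces the required identification at once: the stability structure is non-trivial, but the heart all lies above $\theta_0$.

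For the inductive step, assume the result with central charge $Z_{q-1}$ and angle $\theta_{q-1}$ has been established for $\mathbf{s}_{\leq q-1}$. The Keller--Yang equivalence $\Phi_{s_q,\epsilon_q}$ realising the single mutation at $s_q$ identifies the standard $t$-structure on $\Perf(\hat\Gamma(\mu_{\mathbf{s}_{\leq q-1}}(Q),0))$ with a tilt of the standard $t$-structure on $\Perf(\hat\Gamma(\mu_{\mathbf{s}_{\leq q}}(Q),0))$ at the torsion pair generated by the simple at $s_q$, with the direction of the tilt governed by the sign $\epsilon_q$. I would construct $Z_q$ from $Z_{q-1}$ by rotating only the phase of the simple $S_{\mathbf{s},q}$ by a small amount across $\theta_{q-1}$, in the direction prescribed by $\epsilon_q$, and then setting $\theta_q$ to lie just past $\theta_{q-1}$ on the same side. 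The two clauses of (\ref{recdef1}) are tailored precisely so that this perturbation either adds $S_{\mathbf{s},q}$ to, or removes it from, the torsion part, matching the recursively-defined $T_{\mathbf{s}_{\leq q}}$ after intersecting with $\hat{J}(\mu_{\mathbf{s}_{\leq q}}(Q,W))\text{-mod}$ via Corollary~\ref{Totherway}. The telescoping relation $\Phi_{\mathbf{s}_{\leq q}}=\Phi_{s_q,\epsilon_q}\circ\Phi_{\mathbf{s}_{\leq q-1}}$ then transports the identification from step $q-1$ to step $q$.

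The main obstacle is verifying, at each tilt, that $Z_q$ still takes values in $\mathbb{H}^+$ on every nonzero object of the new heart and that the Harder--Narasimhan property is preserved — a priori one could cross another simple's phase or create degenerate phases. This is handled by exploiting that the relevant hearts are length categories with finitely many simples indexed by $Q_0$, so Bridgeland's existence criterion reduces the HN check to the finite list of simple phases, and the perturbations can be chosen generically to avoid phase collisions. These are precisely the technical points addressed in \cite{Nag13}, so the argument reduces to appealing to loc. cit. once the inductive bookkeeping matching (\ref{recdef1}) with the tilting direction $\epsilon_q$ is in place.
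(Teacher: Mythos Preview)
The paper gives no proof of this proposition at all; it is stated purely as a citation of \cite[Prop.4.1]{Nag13} and used as a black box thereafter. Your proposal is therefore not competing with any argument in the paper, but is a sketch of Nagao's own inductive strategy, and in broad outline it is correct: build the stability condition one mutation at a time, matching each clause of the recursion (\ref{recdef1}) to a single simple crossing a wall.

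A few imprecisions are worth tightening if you want the sketch to stand on its own. First, $S_{\mathbf{s},q}$ is by definition an object of $\hat{J}(Q,W)\text{-mod}$ (it is $\Phi_{\mathbf{s}_{\leq q-1}}^{-1}$ applied to the simple at vertex $s_q$), whereas your $Z_{q-1}$ lives on $\KK(J(\mu_{\mathbf{s}_{\leq q-1}}(Q),0)\text{-mod})$; the phase you rotate is that of $S_{\mathbf{s}_{\leq q-1},s_q}$, not of $S_{\mathbf{s},q}$. Second, passing from step $q-1$ to step $q$ changes the ambient category, so ``construct $Z_q$ from $Z_{q-1}$'' must mean transporting $Z_{q-1}$ along the isomorphism $\KK(\Phi_{s_q,\epsilon_q})$ and then perturbing; the transported simples have $K_0$-classes that are signed sums of the old ones, so the claim that a small generic perturbation keeps all phases in $[0,\pi)$ and distinct requires an argument, not just an appeal to finiteness. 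Third, the statement asks for a single stability condition on the \emph{final} mutated category with a single angle $\theta_{\mathbf{s}}$, so the output of your induction is $(Z_t,\theta_t)$ and the intermediate $(Z_q,\theta_q)$ are auxiliary; this should be said explicitly. None of these is a genuine gap in the strategy, and your closing deference to \cite{Nag13} for the technical verifications is exactly what the paper itself does.
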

Here we are using the embedding of the category of finite dimensional $\hat{J}(\mu_{\mathbf{s}}(Q,W))$-modules into the category of $J(\mu_{\mathbf{s}}(Q),0)$-modules given by the natural map $J(\mu_{\mathbf{s}}(Q),0)\rightarrow \hat{J}(\mu_{\mathbf{s}}(Q,W))$.  In the above proposition we follow the convention that $\mathcal{C}_{<\theta}$ is the full subcategory of $\mathcal{C}$ whose objects have Harder-Narasimhan filtrations featuring only semisimple objects of slope strictly less than $\theta$, and define $\mathcal{C}_{\geq \theta}$ similarly.  

\subsection{Purity and positivity}

In this section, we make the connection with quantum cluster algebras and vanishing cycles, and explain Efimov's work in \cite{Efi11}.

In order to do this, we study moduli spaces of representations for uncompleted Jacobi algebras. Note that uncompleted Jacobi algebras only make sense for \textit{algebraic} potentials. Given a quiver~$Q$, and a dimension vector $\gamma\in\mathbb{Z}^{m}_{\geq 0}$, we define the scheme
\begin{equation}
\label{qstackdef}
M_{Q,\gamma}=\bigoplus_{a\in Q_1}\Hom(\Cp^{\gamma_{t(a)}},\Cp^{\gamma_{s(a)}}),
\end{equation}
which carries a natural $\prod_{i\in Q_0} \Gl_{\Cp}(\gamma_i)$ action, where each general linear group acts by change of basis.  The quotient stack
\[
\mathcal{M}_{Q,\gamma}:=M_{Q,\gamma}\big/\prod_{i\in Q_0}\Gl_{\Cp}(\gamma_i)
\]
is the moduli stack of \textit{right} $J(Q,0)$-modules with dimension vector $\gamma$. 

For an algebraic $W$, $\hat{J}(Q,W)\mathrm{-mod}$ naturally embeds into $J(Q,W)\mathrm{-mod}$ as the full subcategory of nilpotent modules.  In turn, there is a natural embedding of $J(Q,W)\mathrm{-mod}$ into $J(Q,0)\mathrm{-mod}$.    The second of these embeddings is described stack theoretically as the embedding of the stack theoretic critical locus of $\tr(W)_{\gamma}$.  The first of these embeddings has a slightly more involved stack theoretic counterpart, to be studied in Section~\ref{subsec:families} below.
\smallbreak
\begin{remark}
The algebraicity of $W$ also ensures that we get an algebraic function $\tr(W)_{\gamma}$ on the stack $\mathcal{M}_{Q,\gamma}$. 
It is to this function that we will apply our results on vanishing cycles under suitable further hypotheses. 
\end{remark}
\begin{assumption}
\label{mut_alg}
Given a quiver $Q$ and a sequence of vertices $\mathbf{s}=(s_1,\ldots, s_t)$, we assume $W$ is a potential for $Q$ which is nondegenerate with respect to $\mathbf{s}$, such that $\mu_{\mathbf{s}}(Q,W)$ can be chosen to be an algebraic QP.  We denote by $W^{\diamondsuit}$ such a choice of algebraic potential on $\mu_{\mathbf{s}}(Q)$.  
\end{assumption}
We prefer the notation $W^{\diamondsuit}$ over, say, $\mu_{\mathbf{s}}(W)$, since the mutated potential is only defined up to the equivalence relation defined by formal automorphisms, and we wish to make it clear that in this instance we have chosen a particular type of element of the equivalence class - an algebraic potential.  A potential $W$ for which we can pick an algebraic member of the equivalence class of $\mu_{\mathbf{s}}(W)$ always exists: take an algebraic potential $W^{\diamondsuit}$ for $\mu_{\mathbf{s}}(Q)$ that is nondegenerate with respect to the reverse sequence $\overleftarrow{\mathbf{s}}=(s_t,\ldots,s_1)$, which exists by Corollary~\ref{formaltoalg} and existence of nondegenerate potentials \cite[Cor.7.4]{DWZ08}, and set $(Q,W)=\mu_{\overleftarrow{\mathbf{s}}}(\mu_{\mathbf{s}}(Q),W^{\diamondsuit})$.  Again we use the fact that mutation defines an involution on (equivalence classes of) QPs \cite[Thm.5.7]{DWZ08}.
\smallbreak

\begin{remark}
In the event that $(\mu_{\mathbf{s}}(Q),W^{\diamondsuit})$ is a graded algebraic QP, the function $\tr(W^{\diamondsuit})_{\gamma}$ is $\Cp^*$-equivariant after giving $\Hom(\Cp^{\gamma_{t(a)}},\Cp^{\gamma_{s(a)}})$ the weight $|a|$-action, and giving the target $\Cp$ the weight $|W|$-action.
\end{remark}
We introduce a partial ordering on dimension vectors $\gamma\in\mathbb{Z}_{\geq 0}^{Q_0}$ by defining $\gamma'<\gamma$ if $\gamma'_i\leq \gamma_i$ for all $i$, and $\gamma'_i<\gamma_i$ for at least one $i$.  For each dimension vector $\gamma'\leq\gamma$ we define the subspace
\[
M_{\mu_{\mathbf{s}}(Q),\gamma,\gamma'}\subset M_{\mu_{\mathbf{s}}(Q),\gamma}\times\prod_{i\in Q_0} \Gr(\gamma'_i,\gamma_i)
\]
of pairs of a representation $M$ and a flag preserved by $M$. Next, recalling the angle $\theta_{\mathbf{s}}$ from Proposition~\ref{mut_is_tilt}, define $M_{\mu_{\mathbf{s}}(Q),\gamma,<\theta_{\mathbf{s}}}$ to be the complement of the union of the images of the maps $M_{\mu_{\mathbf{s}}(Q),\gamma,\gamma'}\rightarrow M_{\mu_{\mathbf{s}}(Q),\gamma}$ for $\gamma'\leq\gamma$ such that $\arg(Z(\gamma'))\geq \theta_{\mathbf{s}}$.  Since these maps are proper, $M_{\mu_{\mathbf{s}}(Q),\gamma,<\theta_{\mathbf{s}}}$ is an open subscheme of $M_{\mu_{\mathbf{s}}(Q),\gamma}$.  
\smallbreak
\smallbreak
Following \cite{Efi11}, given a nonzero dimension vector $\nn\in \mathbb{Z}^{Q_0}_{\geq 0}$ we define the space of framed representations
\[
M^{\fr}_{\nn,\mathbf{s},\gamma}:=\{(E\in M_{\mu_{\mathbf{s}}(Q),\gamma,< \theta_{\mathbf{s}}},u:J(\mu_{\mathbf{s}}(Q),0)_{\nn}\rightarrow E) \}
\]
and within that, the space of stable framed representations
\[
M^{\sfr}_{\nn,\mathbf{s},\gamma}:=\{(E\in M_{\mu_{\mathbf{s}}(Q),\gamma,< \theta_{\mathbf{s}}},u:J(\mu_{\mathbf{s}}(Q),0)_{\nn}\rightarrow E) \mathrm{ | }\coker(u)\in \mathcal{C}_{\geq \theta_{\mathbf{s}}}\},
\]
where for a general algebraic QP $(Q',W')$, we define $J(Q',W')_{\nn}:=\oplus_{i\in Q'_0}(e_i\cdot J(Q',W'))^{\oplus \nn_i}$.  The group $G_\gamma=\prod_{i\in Q_0}\Gl_{\Cp}(\gamma_i)$ acts on $M^{\sfr}_{\nn,\mathbf{s},\gamma}$ via conjugation on $E$ and postcomposition on~$u$, and we define the stack theoretic quotient
\[
\mathcal{M}^{\sfr}_{\nn,\mathbf{s},\gamma}:=M^{\sfr}_{\nn,\mathbf{s},\gamma}\big/G_\gamma.
\]
We include a proof of the following remark of \cite{Efi11} for completeness.
\begin{proposition}
\label{sfr_smooth}
The stack $\mathcal{M}^{\sfr}_{\nn,\mathbf{s},\gamma}$ is a smooth quasiprojective variety.  
\end{proposition}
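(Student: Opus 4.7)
The plan is to exhibit $M^{\sfr}_{\nn,\mathbf{s},\gamma}$ as a smooth open subvariety of a smooth quasiprojective variety on which $G_\gamma$ acts freely, and then to pass to the quotient by a King-type GIT construction for framed quiver representations.

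First I would observe that $M^{\fr}_{\nn,\mathbf{s},\gamma}$ is the total space of a trivial algebraic vector bundle with fiber $\bigoplus_{i\in Q_0}\Hom(\Cp^{\nn_i},\Cp^{\gamma_i})$ over the open subscheme $M_{\mu_{\mathbf{s}}(Q),\gamma,<\theta_{\mathbf{s}}}\subset M_{\mu_{\mathbf{s}}(Q),\gamma}$, since a homomorphism $u\colon J(\mu_{\mathbf{s}}(Q),0)_{\nn}\to E$ is the same datum as its values on the generating idempotents. Since $M_{\mu_{\mathbf{s}}(Q),\gamma}$ is an affine space, $M^{\fr}_{\nn,\mathbf{s},\gamma}$ is smooth quasiprojective. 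The stability condition $\coker(u)\in\mathcal{C}_{\geq\theta_{\mathbf{s}}}$ requires that $\coker(u)$ have no nonzero quotient of slope $<\theta_{\mathbf{s}}$, an open condition whose complement is the image, under the proper projection from a suitable relative flag scheme over $M^{\fr}_{\nn,\mathbf{s},\gamma}$, of the closed locus of framed representations together with a destabilising quotient of their cokernel. Hence $M^{\sfr}_{\nn,\mathbf{s},\gamma}$ is a smooth quasiprojective open subvariety.

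Second, I would verify freeness of the $G_\gamma$-action. If $g\in G_\gamma$ stabilises $(E,u)$, then $(g-\id)\circ u=0$, so $g-\id$ factors as a morphism $\bar\phi\colon \coker(u)\to E$. The image of $\bar\phi$ is at once a quotient of $\coker(u)\in\mathcal{C}_{\geq\theta_{\mathbf{s}}}$, and, as a subrepresentation of $E\in\mathcal{C}_{<\theta_{\mathbf{s}}}$, an object of $\mathcal{C}_{<\theta_{\mathbf{s}}}$; since each of these subcategories is closed under quotients, respectively subrepresentations, and $\Hom(\mathcal{C}_{\geq\theta_{\mathbf{s}}},\mathcal{C}_{<\theta_{\mathbf{s}}})=0$, we conclude $\bar\phi=0$, whence $g=\id$.

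Finally, to produce the quotient as a variety and not merely a stack, I would invoke the standard King/Reineke-style GIT construction of moduli of stable framed quiver representations: one picks a character $\chi$ of $G_\gamma$ whose linearization on $M^{\fr}_{\nn,\mathbf{s},\gamma}$ has $\chi$-stable locus equal to $M^{\sfr}_{\nn,\mathbf{s},\gamma}$, obtaining a quasiprojective GIT quotient; freeness of the action then upgrades it to a geometric quotient, and smoothness of the source transfers to the quotient since $G_\gamma$ is reductive and acts freely on a smooth variety. The main obstacle is this last step: explicitly matching the categorical stability condition $\coker(u)\in\mathcal{C}_{\geq\theta_{\mathbf{s}}}$ with the numerical $\chi$-stability in King's sense, which is the mechanism guaranteeing quasiprojectivity of the resulting quotient. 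The other ingredients (openness of stability and freeness of the action) are the routine arguments sketched above.
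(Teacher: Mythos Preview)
Your approach is essentially the same as the paper's: smoothness of $M^{\sfr}_{\nn,\mathbf{s},\gamma}$ via openness in an affine fibration over an open subset of affine space, freeness of the $G_\gamma$-action (the paper cites Efimov; your direct argument via $\Hom(\mathcal{C}_{\geq\theta_{\mathbf{s}}},\mathcal{C}_{<\theta_{\mathbf{s}}})=0$ is the standard one), and then a GIT quotient.

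The one place where the paper does real work beyond your sketch is exactly the obstacle you flag. Rather than searching directly for a character $\chi$ of $G_\gamma$, the paper enlarges $\mu_{\mathbf{s}}(Q)$ to an auxiliary quiver $\tilde Q$ with one extra vertex $v$ and $\nn_i$ arrows from each $i\in Q_0$ to $v$, so that a framed representation $(E,u)$ becomes an honest $\tilde Q$-representation of dimension vector $(\gamma,1)$. It then extends $Z$ to $\tilde Z$ by placing $\tilde Z(1_v)$ at an angle just below $\theta_{\mathbf{s}}$ and of very large modulus; a standard check gives that $\tilde Z$-stability for dimension $(\gamma,1)$ coincides with your framed stability condition. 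Finally, a short explicit lemma converts $\tilde Z$-stability at a fixed dimension vector into King $\theta$-stability for a concrete rational $\theta$ (set $\theta(1_i)=\re Z(1_i)-\frac{\re Z(\gamma)}{\im Z(\gamma)}\im Z(1_i)$), which yields the GIT quotient and hence quasiprojectivity. This is precisely the matching of categorical and numerical stability that your proposal leaves open.
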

\begin{proof} We first prove that $M^{\sfr}_{\nn,\mathbf{s},\gamma}$ is itself smooth. For $\gamma'<\gamma$ define 
\[
M^{\fr}_{\nn,\mathbf{s},\gamma,\gamma'}:=\{((E'\subset E)\in M_{\mu_{\mathbf{s}}(Q),\gamma,\gamma'},u:J(\mu_{\mathbf{s}}(Q),0)_{\nn}\rightarrow E')|E\in M_{\mu_{\mathbf{s}}(Q),\gamma,<\theta_{\mathbf{s}}} \}.
\]
The projection to $M^{\fr}_{\nn,\mathbf{s},\gamma}$ is a proper map, and $M^{\sfr}_{\nn,\mathbf{s},\gamma}$ is the complement to the images of the finite collection of proper maps $M^{\fr}_{\nn,\mathbf{s},\gamma,\gamma'}\rightarrow M^{\fr}_{\nn,\mathbf{s},\gamma}$ for $\arg(Z(\gamma-\gamma'))<\theta_{\mathbf{s}}$, so it is open in $M^{\fr}_{\nn,\mathbf{s},\gamma}$.  The space $M^{\fr}_{\nn,\mathbf{s},\gamma}$ is itself an affine fibration over the scheme $M_{\mu_{\mathbf{s}}(Q),\gamma,<\theta_{\mathbf{s}}}$ which is in turn an open subscheme of the smooth scheme $M_{\mu_{\mathbf{s}}(Q),\gamma}$, so it is smooth.  So $M^{\sfr}_{\nn,\mathbf{s},\gamma}$ is an open subscheme of a smooth variety and it is smooth.

Next, by the standard argument recalled in \cite[Prop.3.7]{Efi11}, the $G_\gamma$-action on $M^{\sfr}_{\nn,\mathbf{s},\gamma}$ 
is free.

We will now proceed to show that $\mathcal{M}^{\sfr}_{\nn,\mathbf{s},\gamma} = M^{\sfr}_{\nn,\mathbf{s},\gamma}/G_\gamma$ is a 
GIT quotient; this will prove that $\mathcal{M}^{\sfr}_{\nn,\mathbf{s},\gamma}$ is indeed smooth and quasiprojective as
claimed. To show this, following the argument of \cite[Prop.3.7]{Efi11}, introduce the auxiliary quiver $\tilde Q$ which 
has vertex set $\tilde Q_0=Q_0\cup\{v\}$, the same arrows as $\mu_{\mathbf{s}}(Q)$, and additionally $\nn_i$ arrows 
from $i$ to $v$ for each $i\in Q_0$. Then any object in $M^{\fr}_{\nn,\mathbf{s},\gamma}$ is equivalently 
a representation of the quiver $\tilde Q$ with dimension vector $(\gamma, 1)$.
Extend also the central charge map $Z:\mathbb{Z}^{Q_0}\rightarrow\mathbb{H}^+$
to a central charge map $\tilde Z:\mathbb{Z}^{\tilde Q_0}\rightarrow\mathbb{H}^+$ by defining 
$\tilde Z(1_v)=t\exp(\alpha \sqrt{-1})$, where 
$1_v\in\mathbb{Z}^{\tilde Q_0}$ denotes the lattice generator corresponding to the new vertex~$v$, 
$\alpha<\theta_{\mathbf{s}}$ is an angle, and $t$ is a positive real number. Then a standard argument shows 
that for $\alpha$ sufficiently close to $\theta_{\mathbf{s}}$ and $t$ sufficiently large, we have an equality 
\[ M^{\tilde Z}_{(\gamma,1)}=  M^{\sfr}_{\nn,\mathbf{s},\gamma}\]
between the $\tilde Z$-stable representations of $\tilde Q$ with dimension vector $(\gamma, 1)$ and the stable framed 
representations of $\mu_{\mathbf{s}}(Q)$ defined above. We now conclude by Lemma~\ref{br=k} below, explained to us 
by Efimov. 
\end{proof}
\begin{lemma} Let $Q$ be an arbitrary quiver. Fix a Bridgeland stability condition on the derived category of
${\mathbb C}Q$-modules with heart ${\mathbb C}Q\text{-mod}$ and central charge map 
$Z\colon {\mathbb Z}^{Q_O}\to \mathbb{H}^+$.
Fix also a dimension vector $\gamma\in {\mathbb N}^{Q_0}$. Then there is a stability parameter in the sense of King~\cite{king}
\[\theta \in \left\{ \theta(\gamma)=0\right\}\subset {\mathbb Q}^{Q_0} \subseteq \Hom\left(K_0({\mathbb C}Q\text{\rm -mod}), {\mathbb Q}\right)\]
such that a representation of $Q$ with dimension vector $\gamma$ is $Z$-stable if and only if it is $\theta$-stable.
\label{br=k}
\end{lemma}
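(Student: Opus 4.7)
The plan is to produce $\theta$ as a rational approximation of the real-valued linear functional $\theta_0 \colon \mathbb{Z}^{Q_0} \to \mathbb{R}$ defined by
\[\theta_0(\alpha) := -\mathrm{Im}\bigl(Z(\alpha)\,\overline{Z(\gamma)}\bigr).\]
First I would verify that $\theta_0$ records the comparison of Bridgeland phases. Writing $Z(\alpha)=r_\alpha e^{i\phi_\alpha}$ and $Z(\gamma)=r_\gamma e^{i\phi_\gamma}$ with $r_\alpha,r_\gamma>0$ and $\phi_\alpha,\phi_\gamma\in[0,\pi)$, one has $\theta_0(\alpha)=-r_\alpha r_\gamma\sin(\phi_\alpha-\phi_\gamma)$, and since $\phi_\alpha-\phi_\gamma\in(-\pi,\pi)$ the sign of $\sin$ matches the sign of its argument. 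Hence $\theta_0(\gamma)=0$, and for every nonzero $\alpha\in\mathbb{Z}^{Q_0}_{\geq 0}$ one has $\theta_0(\alpha)>0 \Leftrightarrow \phi_\alpha<\phi_\gamma$ and $\theta_0(\alpha)=0 \Leftrightarrow \phi_\alpha=\phi_\gamma$. Adopting the sign convention for King stability in which $E$ is $\theta$-stable iff $\theta(\dim E)=0$ and $\theta(\dim F)>0$ for every proper nonzero subrepresentation $F$, this identity of signs shows that $\theta_0$-stability and $Z$-stability coincide on representations of dimension $\gamma$.

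Second, I would rationalize $\theta_0$. Let
\[A=\{\alpha\in\mathbb{Z}^{Q_0}:0<\alpha<\gamma\text{ componentwise},\ \theta_0(\alpha)=0\},\]
a finite set, and let $V\subset\mathbb{R}^{Q_0}$ be the rational subspace it spans together with $\gamma$. Then $\theta_0$ lies in the annihilator $V^{\perp}\subset(\mathbb{R}^{Q_0})^{*}$, which is rational, so its $\mathbb{Q}$-points are dense. Choosing a rational $\theta\in V^{\perp}$ sufficiently close to $\theta_0$, continuity preserves the strict signs of $\theta_0(\alpha)$ on the finite set of $\alpha$ with $0<\alpha<\gamma$ and $\theta_0(\alpha)\neq 0$, while automatically $\theta(\alpha)=0$ for $\alpha\in A\cup\{\gamma\}$. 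Reviewing stability: if $E$ is $Z$-stable every proper nonzero sub $F$ has $\phi_F<\phi_\gamma$, so $\theta_0(F)>0$, so $\theta(F)>0$; conversely, if $E$ is $\theta$-stable every proper nonzero sub has $\theta(F)>0$, which rules out $\phi_F\geq\phi_\gamma$ because any such $F$ would have $\theta_0(F)\leq 0$ and hence (by the sign preservation, respectively by $\theta|_A=0$) $\theta(F)\leq 0$.

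The main obstacle is the rationalization: a generic rational perturbation of $\theta_0$ could send some value $\theta_0(\alpha)=0$ (with $\alpha\in A$) to a positive number, thereby demoting a would-be $Z$-destabilizing subobject to a $\theta$-permitted one and breaking the equivalence of stabilities. Restricting perturbations to the rational annihilator $V^{\perp}$ circumvents this precisely because $A\cup\{\gamma\}$ consists of integer vectors, so $V$ is a rational subspace of $\mathbb{R}^{Q_0}$.
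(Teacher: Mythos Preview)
Your core functional $\theta_0(\alpha)=-\mathrm{Im}\bigl(Z(\alpha)\overline{Z(\gamma)}\bigr)$ is, up to the positive scalar $\mathrm{Im}\,Z(\gamma)$, exactly the paper's choice
\[
\theta(\alpha)=\mathrm{Re}\,Z(\alpha)-\frac{\mathrm{Re}\,Z(\gamma)}{\mathrm{Im}\,Z(\gamma)}\,\mathrm{Im}\,Z(\alpha),
\]
and both encode the same phase comparison. So the approach is the same. The difference is that the paper simply writes down this real linear form and stops, whereas you go on to rationalize it by perturbing inside the rational annihilator of the finitely many integer vectors on which $\theta_0$ vanishes. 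On the point of landing in $\mathbb{Q}^{Q_0}$ your argument is therefore strictly more complete than the paper's; the paper's formula does not in general yield a rational $\theta$, though for the intended application (identifying a GIT-stable locus) a real parameter suffices.

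One small wording issue: your set $A$ should range over all nonzero $\alpha\lneq\gamma$ in the componentwise partial order (allowing $\alpha_i=0$ at some vertices), not only those with every coordinate strictly between $0$ and $\gamma_i$. Subrepresentations can certainly be supported on a proper subset of the vertices. With that reading the rationalization step goes through unchanged.
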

\begin{proof} Define the map $\theta$ on basis vectors $1_i\in{\mathbb Z}^{Q_0}$ corresponding to vertices $i\in Q_0$ by
\[ \theta(1_i) = {\re Z(1_i)} - \frac{\re Z(\gamma)}{\im Z(\gamma)} \im Z(1_i)
\]
and extend by linearity. Then it is immediate that $\theta(\gamma)=0$, and 
for any dimension vector $\gamma'<\gamma$ (in the partial order defined above), 
$\arg Z(\gamma')<\arg Z(\gamma)$ if and only if $\theta(\gamma')>\theta(\gamma)=0$. 
\end{proof}

In the case when~$Q$ is a graded quiver, $\mathcal{M}^{\sfr}_{\nn,\mathbf{s},\gamma}$ carries a natural $\Cp^*$-action, via the induced action on the linear maps defining $E$ and the action on $u$ which leaves the $u(e_i)$ invariant, where the $e_i$ are the length zero paths in $J(\mu_{\mathbf{s}}(Q),0)$.  
\smallbreak
Assume that $(Q,W)$ satisfies Assumption~\ref{mut_alg}, with respect to the sequence of vertices $\mathbf{s}$.  The function $\tr(W^{\diamondsuit})_{\nn,\gamma}$ is defined on $\mathcal{M}^{\sfr}_{\nn,\mathbf{s},\gamma}$  by setting $\tr(W^{\diamondsuit})_{\nn,\gamma}=\tr(W^{\diamondsuit})_{\gamma}\circ \pi_{\nn,\gamma}$, where $\pi_{\nn,\gamma}:\mathcal{M}^{\sfr}_{\nn,\mathbf{s},\gamma}\rightarrow\mathcal{M}_{\mu_{\mathbf{s}}(Q),\gamma}$ is the natural projection. By the proof of Proposition~\ref{sfr_smooth}, we have that 
\begin{align}
\label{sfr_is_crit}
\mathcal{M}^{\sfr,W^{\diamondsuit}}_{\nn,\mathbf{s},\gamma}:=&\{(E\in M_{\mu_{\mathbf{s}}(Q),\gamma,< \theta_{\mathbf{s}}}\cap J(\mu_{\mathbf{s}}(Q,W))\mathrm{-mod},u:J(\mu_{\mathbf{s}}(Q,W))_{\nn}\rightarrow E)\mathrm{ | }\\ \notag&\coker(u)\in \mathcal{C}_{\geq \theta}\}\Big/\prod_{i\in Q_0} \Gl_{\Cp}(\gamma_{i})
\\ \notag=&\crit(\tr(W^{\diamondsuit})_{\nn,\gamma}).
\end{align}
Taking the reduced zero locus of the functions $\tr(l)$, where $l$ ranges over the set of linear combinations of nontrivial cycles in $\mu_{\mathbf{s}}(Q)$, we define the closed subscheme 
$$\mathcal{M}^{\sfr,\Sp,W^{\diamondsuit}}_{\nn,\mathbf{s},\gamma}\subset \mathcal{M}^{\sfr,W^{\diamondsuit}}_{\nn,\mathbf{s},\gamma},$$ 
the set-theoretic locus of nilpotent modules. Note that by a theorem of Le Bruyn and Procesi~\cite[Thm.1]{LBP90}, if $W^{\diamondsuit}=0$ then the ideal generated by all the $\tr(l)$ is in fact reduced. 

Finally, we come to the precise statement that links Hodge theory with quantum cluster transformations.  Given $J$ a complex of mixed Hodge structures, we define the weight polynomial $\chi_W([J],t):=\sum_{i,m\in\mathbb{Z}}(-1)^m \dim \gr_W^i(J^m)t^i$ as usual.  For $H$ a complex of $\mu_n$-equivariant mixed Hodge structures, we define $\chi'_W(H,t)=\chi_W(H_1,t)+t\chi_W(H_{\neq 1},t)$, where $H_1$ is the $\mu_n$ invariant part of $H$, and $H_{\neq 1}$ is its unique $\mu_n$-equivariant complement in $H$.  Then the following result is proved in \cite{Efi11}.
\begin{theorem}\cite[Thm.5.3]{Efi11}
\label{hodgetocluster}
Assume that $(Q,M)$ is the usual initial seed, with associated quantum cluster $(X_1, \ldots ,X_n)$, and let $W$ be a potential for $Q$ satisfying Assumption~\ref{mut_alg} with respect to the sequence of vertices $\mathbf{s}$. Then the mutated toric frame $\mu_{\mathbf{s}}(M)$ has the following expression
\begin{align}
\label{efthm}
&\mu_{\mathbf{s}}(M)(\nn)= \\ \notag &X^{[\hat{\Gamma}(\mu_s(Q,W))_{\nn}]}\sum_{\gamma\in\mathbb{Z}_{\geq 0}^n}\chi'_W([\Ho^*(\mathcal{M}^{\sfr,\Sp,W^{\diamondsuit}}_{\nn,\mathbf{s},\gamma},\phi_{\tr(W^{\diamondsuit})_{\nn,\gamma}})]),-q^{-1/2})q^{-\chi_{\mu_{\mathbf{s}}(Q)}(\gamma,\gamma)/2}X^{\iota(-\Phi_{\mathbf{s}}^{-1}(\gamma))}.
\end{align}
\end{theorem}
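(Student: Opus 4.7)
The plan is to identify both sides of~\eqref{efthm} as outputs of the Kontsevich--Soibelman integration map applied to the wall-crossing identity associated with the Bridgeland stability condition of Proposition~\ref{mut_is_tilt}, following the strategy of Efimov~\cite{Efi11} and Nagao~\cite{Nag13}. The key structural input is that proposition: under $\Phi_{\mathbf{s}}$, the heart of $\hat{J}(Q,W)$-modules is carried to the tilt at phase $\theta_{\mathbf{s}}$ of the heart of $\hat{J}(\mu_{\mathbf{s}}(Q,W))$-modules inside the derived category of $\Gamma(\mu_{\mathbf{s}}(Q),0)$-modules. Hence on the quantum torus side, the cluster mutation $M\leadsto \mu_{\mathbf{s}}(M)$ corresponds to conjugation by the single wall factor at $\theta_{\mathbf{s}}$ in the wall-crossing formula.

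To isolate the specific cluster variable $\mu_{\mathbf{s}}(M)(\nn)$, I would introduce a framed sector. Extend the quiver $\mu_{\mathbf{s}}(Q)$ by one auxiliary vertex with $\nn_i$ arrows from each $i\in Q_0$, and extend the stability condition as in the proof of Proposition~\ref{sfr_smooth} so that the new vertex sits at a phase infinitesimally below $\theta_{\mathbf{s}}$. The moduli of stable representations of this extended quiver is precisely $\mathcal{M}^{\sfr}_{\nn,\mathbf{s},\gamma}$, and by~\eqref{sfr_is_crit} the subvariety cut out by the relations from $W^{\diamondsuit}$ is the critical locus of the regular function $\tr(W^{\diamondsuit})_{\nn,\gamma}$; restricting to nilpotent modules produces $\mathcal{M}^{\sfr,\Sp,W^{\diamondsuit}}_{\nn,\mathbf{s},\gamma}$, whose vanishing cycle cohomology yields the cohomological framed Donaldson--Thomas invariants that should appear in the sum.

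I would then apply the integration map. The wall-crossing identity for the extended quiver is an equality of two products of motivic quantum dilogarithms in the quantum torus, ordered by phase below and above $\theta_{\mathbf{s}}$; extracting the coefficient of the monomial dual to the framing vertex rearranges this equality into~\eqref{efthm}. The right-hand side then sums the framed invariants $\chi'_W$ of $\Ho^*(\mathcal{M}^{\sfr,\Sp,W^{\diamondsuit}}_{\nn,\mathbf{s},\gamma},\phi_{\tr(W^{\diamondsuit})_{\nn,\gamma}})$ with the symmetrization factor $q^{-\chi_{\mu_{\mathbf{s}}(Q)}(\gamma,\gamma)/2}$ and the monomial $X^{\iota(-\Phi_{\mathbf{s}}^{-1}(\gamma))}$, while the prefactor $X^{[\hat{\Gamma}(\mu_{\mathbf{s}}(Q,W))_{\nn}]}$ and the appearance of $\iota\circ\Phi_{\mathbf{s}}^{-1}$ together express the change of toric frame $\mu_{\mathbf{s}}(M)\leftrightarrow M$ via the isomorphism induced by $\Phi_{\mathbf{s}}$ on Grothendieck groups. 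The substitution $t\mapsto -q^{-1/2}$ in $\chi'_W$ encodes both the weight filtration and the $\mu_n$-equivariance (monodromy) of the vanishing cycle sheaf, producing the symmetric $q$-integers characteristic of the motivic quantum dilogarithm.

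The main obstacle will be bookkeeping. One must track signs and half-integer powers of $q$, verify that the identification $K_0(J(\mu_{\mathbf{s}}(Q),0)\text{-mod})\cong\mathbb{Z}^{Q_0}$ is compatible under $\Phi_{\mathbf{s}}$ with the lattice isomorphism $\nu$ entering the initial toric frame, and check that the stable framed moduli of Proposition~\ref{sfr_smooth} matches the coefficient extraction on the cohomological Hall algebra side term by term. Beyond the bookkeeping, the nontrivial input is that the cohomological integration map is a ring homomorphism respecting the wall-crossing identities---a cohomological refinement of the Kontsevich--Soibelman formalism---which is precisely what lifts the argument from the classical character formula of Nagao to the quantum setting.
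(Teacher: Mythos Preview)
The paper does not contain a proof of this statement: Theorem~\ref{hodgetocluster} is quoted from Efimov~\cite[Thm.~5.3]{Efi11} and is followed only by clarifying remarks on the exponents, the modified weight polynomial $\chi'_W$, and the sign conventions. There is therefore no in-paper argument to compare your proposal against.

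That said, your sketch accurately describes the strategy underlying Efimov's proof. The reduction to a wall-crossing identity in the motivic/cohomological Hall algebra for the Bridgeland stability condition of Proposition~\ref{mut_is_tilt}, the framing trick via the auxiliary vertex (exactly as in the proof of Proposition~\ref{sfr_smooth}), and the extraction of the coefficient at the framing vertex after applying the Kontsevich--Soibelman integration map are precisely the steps carried out in~\cite{Efi11}, building on~\cite{Nag13} and~\cite{KS08, KS10}. Your identification of the remaining work as ``bookkeeping'' plus the ring-homomorphism property of the integration map is also fair; the paper's Remark~\ref{remark:statusofKS} is devoted to the status of exactly these foundational ingredients (Thom--Sebastiani, the integral identity). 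One small point: you should be explicit that the passage from the formal potential on $(Q,W)$ to the algebraic $W^{\diamondsuit}$ on $\mu_{\mathbf{s}}(Q)$, and the comparison of Donaldson--Thomas invariants across this passage, is itself part of the argument (this is what Assumption~\ref{mut_alg} and~\cite[Appendix]{Efi11} are for), not merely a notational convenience.
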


We conclude this section with a series of long remarks clarifying aspects of the above statement.  With the exception of Proposition \ref{intpowers}, taken from \cite{Efi11}, they can be safely skipped on a first reading.

\begin{remark}
In this remark, we define the exponents of (\ref{efthm}) -- although of course for considering positivity and the Lefschetz property these definitions are strictly speaking irrelevant.  First, for arbitrary QP $(Q',W')$, we define 
\[\hat{\Gamma}(Q',W')_{\nn}:=\oplus_{i\leq n}(e_i\cdot \hat{\Gamma}(Q',W'))^{\oplus \nn_i}. \]We fix $\KK(\Perf(\hat{\Gamma}(Q,W)\mathrm{-Mod})\cong\mathbb{Z}^n$ via the map on positive vectors \[\tau:\nn\mapsto\hat\Gamma(Q,W)_{\nn}.\]   We define $X^{[\hat{\Gamma}(\mu_{\mathbf{s}}(Q,W))_{\nn}]}:=X^{\tau^{-1}\KK(\Phi_{\mathbf{s}})^{-1}([\hat{\Gamma}(\mu_{\mathbf{s}}(Q,W))_{\nn}])}$.   The map $\iota$ is defined by applying $\KK$ to the inclusion $\hat{\Gamma}(Q,W)\mathrm{-mod}\rightarrow \Perf(\hat{\Gamma}(Q,W)\mathrm{-Mod})$ and composing with $\tau^{-1}$.  Finally, for an arbitrary quiver $Q'$, the Ringel form $\chi_{Q'}:\mathbb{Z}^{Q'_0}\times \mathbb{Z}^{Q'_0}\rightarrow \mathbb{Z}$ is defined by \[\chi_{Q'}(\mathbf{n},\mathbf{m})=\sum_{i\in Q'_0}\mathbf{n}_i\mathbf{m}_i-\sum_{a\in Q'_1}\mathbf{n}_{s(a)}\mathbf{m}_{t(a)}.\]
\end{remark}

\begin{remark}
The form of (\ref{efthm}) is slightly different from what appears in~\cite[Thm.5.3]{Efi11}.  Above we consider (modified) weight polynomials of usual mixed Hodge structures.  Efimov considers instead classes in the Grothendieck group of monodromic mixed Hodge structures, as considered in \cite{KS08}, which have their own version of the weight polynomial.  These are the natural coefficients for generating series in the cohomological Hall algebra, while our weight polynomials should be seen as realisations of elements in the naive Grothendieck ring of $\hat{\mu}$-equivariant motives (motivic vanishing cycles).  The passage between generating series in the motivic Hall algebra and the cohomological Hall algebra is discussed in Section 7.10 of \cite{KS10} - it is realised by a map on coefficients that pulls back the weight polynomial for monodromic mixed Hodge structures to $\chi'_W(t^{-1})$.  The minus sign that appears in the exponent in the weight polynomial on the right hand side of (\ref{efthm}) is a result of the duality functor appearing in~\cite[Thm.5.3]{Efi11}.  Note that the definition of the 
modified weight polynomial is just the specialization at $z_1=z_2=q^{1/2}$ of a suitable equivariant Hodge polynomial (as in \cite[p.69]{KS08}), which defines a ring homomorphism on the naive Grothendieck ring of $\hat{\mu}$-equivariant motives with a convolution product, the coefficients of the motivic Hall algebra.
\end{remark}

\begin{remark}
In this remark, we comment further on the minus sign that appears as the coefficient of $q^{-1/2}$ in the weight polynomial.
The source of this minus sign is the definition of the integration map to the motivic quantum torus of \cite{KS08}, defined in [ibid, Sec.6.3] -- that is, we take the twisted weight polynomial obtained after substituting $q^{1/2}\mapsto -q^{1/2}$.  We recall from [ibid] that the weight polynomial of the square root of the Tate motive is $-q^{1/2}$, and its Euler characteristic is $-1$.  This choice is made so that, amongst other things, the Euler characteristic of the virtual motive $\mathbb{L}^{-\dim(X)/2}[X]$ of a smooth scheme $X$ is the same as its weighted Euler characteristic $\chi(X,\nu_X)$, where $\nu_X$ is Behrend's microlocal function \cite{Beh09}.  By picking this sign, the expression (\ref{efthm}) becomes precisely the result of applying the Kontsevich-Soibelman integration map to an elementary identity in the motivic Hall algebra of \cite{KS08}, see \cite[Sec.7.2]{Nag13} for an exposition of this.  Note that because we take this twisted weight polynomial, the Euler characteristic specialization is $q^{1/2}=-1$, while the specialization that recovers classical from quantum cluster mutation is $q^{1/2}=1$.  So in fact, in this setup, positivity follows from purity, without any statements regarding vanishing of odd cohomology.  However, in order to justify the part of the Lefschetz property (Conjecture~\ref{LefProperty}) regarding the parity of $k$, we will still need the following proposition.
\end{remark}
\begin{proposition}\cite[(5.5) of Thm 5.3]{Efi11}
\label{intpowers}
The weight polynomial \[\chi'_W([\Ho^*(\mathcal{M}^{\sfr,\Sp,W^{\diamondsuit}}_{\nn,\mathbf{s},\gamma},\phi_{\tr(W^{\diamondsuit})_{\nn,\gamma}})]),-q^{-1/2})\] appearing in Theorem~\ref{hodgetocluster} is a Laurent polynomial in $q$.
\end{proposition}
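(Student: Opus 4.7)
The plan is to combine Corollary~\ref{cor:pure} with an odd cohomological vanishing statement for the vanishing cycle cohomology on $\mathcal{M}^{\sfr,\Sp,W^{\diamondsuit}}_{\nn,\mathbf{s},\gamma}$.

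First I would verify that the hypotheses of Corollary~\ref{cor:pure} apply to $\mathcal{M}^{\sfr}_{\nn,\mathbf{s},\gamma}$ equipped with the function $\tr(W^{\diamondsuit})_{\nn,\gamma}$: this ambient space is smooth and quasiprojective by Proposition~\ref{sfr_smooth}; the grading on $\mu_{\mathbf{s}}(Q)$ induces a $\C^*$-action under which $\tr(W^{\diamondsuit})_{\nn,\gamma}$ is equivariant of positive weight $|W^{\diamondsuit}|$ (as noted in the remark preceding the proposition); and the nilpotent locus $\mathcal{M}^{\sfr,\Sp,W^{\diamondsuit}}_{\nn,\mathbf{s},\gamma}$ is projective by boundedness of stable framed nilpotent modules with fixed dimension vector, and is cut out as a union of components of the critical locus by the additional trace relations. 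Applying Corollary~\ref{cor:pure} gives the splitting $H^m=H^m_1\oplus H^m_{\neq 1}$ into pure pieces of weights $m$ and $m-1$ respectively.

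Next, I would plug this purity into the definition of the modified weight polynomial. Because $H^m_1$ is pure of weight $m$, $\chi_W([H_1],t)=\sum_m(-1)^m\dim H^m_1\cdot t^m$; because $H^m_{\neq 1}$ is pure of weight $m-1$, $t\cdot\chi_W([H_{\neq 1}],t)=\sum_m(-1)^m\dim H^m_{\neq 1}\cdot t^m$. Adding and specializing $t=-q^{-1/2}$, the two signs combine:
\[
\chi'_W([\Ho^*(\mathcal{M}^{\sfr,\Sp,W^{\diamondsuit}}_{\nn,\mathbf{s},\gamma},\phi_{\tr(W^{\diamondsuit})_{\nn,\gamma}})],-q^{-1/2})=\sum_m\dim H^m\cdot q^{-m/2}.
\]
This is a priori a Laurent polynomial in $q^{1/2}$, so the proposition reduces to showing odd vanishing: $H^m=0$ for all odd $m$.

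The remaining, and main, obstacle is this odd vanishing. The natural strategy is to construct an auxiliary algebraic $\C^*$-action on $\mathcal{M}^{\sfr}_{\nn,\mathbf{s},\gamma}$ that commutes with the grading action, preserves the critical locus of $\tr(W^{\diamondsuit})_{\nn,\gamma}$, and whose Bia\l ynicki--Birula stratification consists of locally closed cells of even complex dimension. A natural candidate is the scaling $u\mapsto \la\cdot u$ on the framing datum, possibly combined with a compensating rescaling on the underlying representation so as to preserve the stability condition $\arg(Z(\gamma'))<\theta_{\mathbf{s}}$; the fixed locus then parametrizes split framings $(E,u)$, which one expects to admit an inductive analysis whose attracting cells have even complex dimension. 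Combined with $\C^*$-equivariance of $\phi_{\tr(W^{\diamondsuit})_{\nn,\gamma}}$, this would force $H^m=0$ for odd $m$ and conclude the proof. The technical heart is constructing the auxiliary action intrinsically (so that it descends through the GIT quotient of Proposition~\ref{sfr_smooth}) and verifying evenness of the BB cell dimensions; an alternative would be to invoke a Morse-theoretic or cohomological Hall algebra argument exploiting the hard Lefschetz statement of Corollary~\ref{cor:pure} together with parity of the Lefschetz primitive decomposition.
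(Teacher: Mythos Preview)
The paper does not supply its own proof of this proposition: it is quoted verbatim from \cite[(5.5) of Thm 5.3]{Efi11}, and Efimov's argument is independent of the purity results developed here (indeed it applies without any graded hypothesis on $(\mu_{\mathbf{s}}(Q),W^{\diamondsuit})$). So there is nothing in the paper to compare your argument against; the question is whether your route stands on its own.

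Your first step is fine: under the graded hypothesis, Corollary~\ref{cor:pure} together with Proposition~\ref{unionofcomponents} gives purity, and your computation that
\[
\chi'_W([\Ho^*],-q^{-1/2})=\sum_m\dim H^m\cdot q^{-m/2}
\]
is correct. (Note, though, that you are already importing the graded hypothesis and the projectivity statement of Proposition~\ref{unionofcomponents}, neither of which is part of the proposition as stated; you are really proving it only in the setting of Theorem~\ref{Lefschetz}.)

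The genuine gap is the second step. You reduce the proposition to odd cohomological vanishing and then write down a wish list: an auxiliary $\C^*$-action commuting with the grading action, compatible with stability, descending through the GIT quotient, with even-dimensional Bia\l{}ynicki--Birula cells. None of this is verified. The candidate action $u\mapsto\lambda u$ on the framing alone does not preserve the stability condition (it moves the locus where $\coker(u)\in\mathcal{C}_{\geq\theta_{\mathbf{s}}}$), and ``a compensating rescaling on the underlying representation'' is not specified, nor is it clear one exists that keeps $\tr(W^{\diamondsuit})_{\nn,\gamma}$ homogeneous of the right weight. Even granting such an action, you give no argument that the fixed loci are cellular or that the attracting-cell dimensions are even. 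You yourself flag this (``the technical heart is constructing the auxiliary action \ldots and verifying evenness''), which is an honest admission that the proof is not complete. The alternative suggestion via hard Lefschetz and ``parity of the primitive decomposition'' does not help either: hard Lefschetz relates $H^{n-k}$ to $H^{n+k}$ but says nothing about whether $n\pm k$ is even or odd.

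In short, you have correctly shown that in the graded case the proposition is equivalent to odd vanishing of $H^*(\mathcal{M}^{\sfr,\Sp,W^{\diamondsuit}}_{\nn,\mathbf{s},\gamma},\phi)$, but you have not proved odd vanishing. This is precisely why the paper invokes Efimov's external result rather than deducing the parity statement from its own methods; see the remark immediately preceding the proposition.
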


\begin{remark} 
Efimov's work relies on the two Kontsevich--Soibelman papers~\cite{KS08, KS10}, some aspects of 
which remain conjectural. For us, Efimov's main result is~\cite[Thm.5.3]{Efi11}, which
leads to the expression~\eqref{efthm} in terms of a weight polynomial. 
In this remark, we discuss precisely what are the necessary ingredients to arrive at this result. 

Efimov's work is based on the wall-crossing technology of Kontsevich--Soibelman~\cite{KS08, KS10}, but 
only after specializing to the Grothendieck ring of exponential mixed Hodge structures.  
The main technical ingredients required for these results are the Thom--Sebastiani 
Theorem and the ``integral identity'' \cite[Conj.4]{KS08}.  Both statements can be refined, either in terms 
of complexes of mixed Hodge structures (without passing to the $K$-ring) or in terms of the naive Grothendieck ring 
of $\hat{\mu}$-equivariant motives.  The status of these refinements is slightly different for each statement.
The Thom-Sebastiani theorem is firmly established in the motivic context~\cite[Thm.5.18]{GLM}, 
(see also \cite[Thm.5.2.2]{DL99}), meaning where we work with the naive Grothendieck ring 
of $\hat{\mu}$-equivariant motives. Its proof in the Hodge-theoretic context remains in preprint form for now~\cite{S3}.
The integral identity is proved in the Hodge-theoretic case (for critical cohomology, the case relevant for us)
in~\cite[Sect.7.1]{KS10}, while the general motivic case has only been claimed recently~\cite{LQT}. 
In either case, once we pass to the Hodge-theoretic Grothendieck ring, both statements are known; i.e., one can use the motivic 
side whenever one needs Thom--Sebastiani, and the Hodge-theoretic side whenever one needs the integral identity.

A further aspect of Efimov's work, contained in~\cite[Appendix]{Efi11}, is
the need to associate Donaldson--Thomas type invariants to quivers with
formal potential, using, as above, a mutation equivalence to a 
quiver with algebraic potential. 
As the discussion provided there is rather brief, a more detailed
exposition of these ideas will be presented in~\cite{D}.

\label{remark:statusofKS}\end{remark}
\subsection{Families of nilpotent modules}
\label{subsec:families}

We now assume we are in the situation of Assumption~\ref{mut_alg}.  In this section, we study 
the locus of nilpotent representations of the algebraic QP $(\mu_{\mathbf{s}}(Q), W^{\diamondsuit})$; the 
main goal is to show that nilpotence is essentially an open and closed condition for stable framed representations.

Associated to the quiver $\mu_{\mathbf{s}}(Q)$, we have the smooth scheme 
$\mathcal{M}^{\sfr}_{\mathbf{n},\mathbf{s},\gamma}$ equipped with the function
$\tr(W^{\diamondsuit})_{\nn,\gamma}$
whose critical locus is 
$\mathcal{M}^{\sfr,W^{\diamondsuit}}_{\mathbf{n},\mathbf{s},\gamma}$. Inside the latter, we have
the closed subscheme $\mathcal{M}^{\sfr,\Sp,W^{\diamondsuit}}_{\nn,\mathbf{s},\gamma}$, the 
set-theoretic locus of nilpotent modules.
In order to apply Corollary~\ref{cor:pure}  to the result of Theorem~\ref{hodgetocluster}, 
we need to show the following:
\begin{proposition}\label{unionofcomponents}
The subvariety $\mathcal{M}^{\sfr,\Sp,W^{\diamondsuit}}_{\mathbf{n},\mathbf{s},\gamma}$ of $\ \mathcal{M}^{\sfr,W^{\diamondsuit}}_{\mathbf{n},\mathbf{s},\gamma}$ is projective, and is a union of connected components of the reduced support of $\mathcal{M}^{\sfr,W^{\diamondsuit}}_{\mathbf{n},\mathbf{s},\gamma}$.
\end{proposition}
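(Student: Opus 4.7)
The proposition has two assertions: projectivity of $\mathcal{M}^{\sfr,\Sp,W^{\diamondsuit}}_{\nn,\mathbf{s},\gamma}$, and the fact that it is a union of connected components of the reduced critical locus. I would address them separately.

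\textbf{Projectivity.} By definition $\mathcal{M}^{\sfr,\Sp,W^{\diamondsuit}}_{\nn,\mathbf{s},\gamma}$ is cut out of $\mathcal{M}^{\sfr,W^{\diamondsuit}}_{\nn,\mathbf{s},\gamma}$ by the vanishing of the trace functions $\tr(l)$ for $l$ a cycle in $\mu_{\mathbf{s}}(Q)$, i.e.\ it is the moduli of stable framed \emph{nilpotent} $J(\mu_{\mathbf{s}}(Q),W^{\diamondsuit})$-modules of dimension vector $\gamma$. To obtain projectivity, it suffices to embed this as a closed subscheme of a projective scheme: I would present the ambient moduli of stable framed nilpotent representations either as a GIT quotient of Lusztig's nilpotent variety $\Lambda_\gamma\subset M_{\mu_{\mathbf{s}}(Q),\gamma}$ by $G_\gamma$ (where the framing supplies a character for which the quotient is proper, since the framing precisely removes the non-properness issue that $0$ lies in the closure of every nilpotent $G_\gamma$-orbit), or equivalently as a closed subscheme of a Quot-type moduli space built from the completed path algebra $\widehat{\mathbb{C}\mu_{\mathbf{s}}(Q)}$.

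\textbf{Union of components.} Nilpotence is closed, so the remaining content is that $\mathcal{M}^{\sfr,\Sp,W^{\diamondsuit}}_{\nn,\mathbf{s},\gamma}$ is also \emph{open} in the reduced scheme $(\mathcal{M}^{\sfr,W^{\diamondsuit}}_{\nn,\mathbf{s},\gamma})_{\mathrm{red}}$. I would argue this in three steps. First, any finite-dimensional $J(\mu_{\mathbf{s}}(Q),W^{\diamondsuit})$-module $E$ has a canonical maximal nilpotent submodule $E^{\nilp}\subseteq E$ whose quotient has no nilpotent composition factors. Second, I would exploit the defining cokernel condition $\coker(u)\in\mathcal{C}_{\geq\theta_{\mathbf{s}}}$ together with Proposition~\ref{mut_is_tilt}: by Nagao's construction of the Bridgeland stability, objects in $\mathcal{C}_{\geq\theta_{\mathbf{s}}}\cap\hat{J}(\mu_{\mathbf{s}}(Q,W^{\diamondsuit}))\text{-mod}$ correspond under $\Phi_{\mathbf{s}}$ to nilpotent $\hat{\Gamma}(\mu_{\mathbf{s}}(Q,W^{\diamondsuit}))$-modules, forcing $\coker(u)$ to be nilpotent; a parallel analysis of $\im(u)$ using the framing yields nilpotence of $E$ itself. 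Third, I would use that nilpotent modules form a Serre subcategory of $J(\mu_{\mathbf{s}}(Q),W^{\diamondsuit})\text{-mod}$ to promote this pointwise statement to the assertion that $b\mapsto\dim E_b^{\nilp}$ is locally constant along the reduced stable framed critical locus, so that the nilpotent locus is clopen.

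\textbf{Main obstacle.} The hard step is the second one. Converting the abstract cokernel condition $\coker(u)\in\mathcal{C}_{\geq\theta_{\mathbf{s}}}$ into \emph{genuine} nilpotence of $\coker(u)$, and then controlling $\im(u)$ similarly, relies on the specific shape of the Bridgeland stability from Proposition~\ref{mut_is_tilt}: it is engineered so that the non-nilpotent simples of $J(\mu_{\mathbf{s}}(Q),0)$ (those with dimension vector supported on an oriented cycle of $\mu_{\mathbf{s}}(Q)$) cannot appear as subquotients of stable framed critical points lying in the connected component of a nilpotent fiber. Making this rigorous in families, and in particular upgrading semicontinuity of $\dim E_b^{\nilp}$ to genuine local constancy along the reduced critical locus, is the real content of the argument.
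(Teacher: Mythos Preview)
Your proposal has a genuine gap in the ``union of components'' argument, and the paper's proof proceeds by a completely different mechanism.

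The core problem is in your step 2. The defining conditions $E\in\mathcal{C}_{<\theta_{\mathbf{s}}}$ and $\coker(u)\in\mathcal{C}_{\geq\theta_{\mathbf{s}}}$ are taken in $\mathcal{C}=J(\mu_{\mathbf{s}}(Q),0)\text{-mod}$, not in $\hat{J}(\mu_{\mathbf{s}}(Q),W^{\diamondsuit})\text{-mod}$. Proposition~\ref{mut_is_tilt} only identifies $\Phi_{\mathbf{s}}(F_{\mathbf{s}})$ with $\mathcal{C}_{\geq\theta_{\mathbf{s}}}\cap\hat{J}(\mu_{\mathbf{s}}(Q,W))\text{-mod}$; it says nothing about non-nilpotent modules satisfying the relations. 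So you cannot deduce from $\coker(u)\in\mathcal{C}_{\geq\theta_{\mathbf{s}}}$ that $\coker(u)$ is nilpotent, and similarly the image of $u$ is a finite-dimensional quotient of $J(\mu_{\mathbf{s}}(Q),W^{\diamondsuit})_{\nn}$, which need not be nilpotent. Indeed, there \emph{are} in general non-nilpotent points of $\mathcal{M}^{\sfr,W^{\diamondsuit}}_{\nn,\mathbf{s},\gamma}$; the proposition only asserts they lie on different components. Your step 3 then inherits this problem: $\dim E_b^{\nilp}$ is upper semicontinuous, but you have no mechanism forcing it upward, so local constancy does not follow.

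The paper's argument is structural rather than pointwise. It introduces the functor $\mathcal{M}^{\sfr,\nilp,W^{\diamondsuit}}_{\nn,\mathbf{s},\gamma}$ of stable framed nilpotent modules in families and shows (Lemma~\ref{familiesofmodules}) that this functor coincides with the colimit of the infinitesimal thickenings of $\mathcal{M}^{\sfr,\Sp,W^{\diamondsuit}}_{\nn,\mathbf{s},\gamma}$ inside $\mathcal{M}^{\sfr,W^{\diamondsuit}}_{\nn,\mathbf{s},\gamma}$. The key step (Proposition~\ref{equivalenceofstacks}) is that the derived equivalence $\Phi_{\mathbf{s}}$ works in $A$-families and induces an isomorphism of functors between $\mathcal{M}^{\sfr,\nilp,W^{\diamondsuit}}_{\nn,\mathbf{s},\gamma}$ and the quiver Grassmannian $\mathrm{Grass}(\Ho^1(P_{\nn}),-\Phi_{\mathbf{s}}^{-1}(\gamma))$, which is manifestly projective. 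An elementary ind-scheme lemma (Lemma~\ref{indvarieties}) then says: if the colimit of thickenings $Y_*$ of $Y\subset X$ is represented by a finite-type scheme $Z$, the sequence stabilizes to $Z$ and $\mathcal{I}_{Z/X}=\mathcal{I}_{Z/X}^2$, so by Nakayama $Z$ is open and closed in $X$. Thus both projectivity and the clopen property come out of the same representability statement, and the essential input is precisely the derived equivalence applied in families --- an ingredient entirely absent from your sketch.
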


We will prove this proposition using the following lemma.  In what follows, let 
 $X$ be a scheme of finite type and $Y \subset X$ a closed subscheme, and let
$$Y = Y_1 \subset Y_2 \subset Y_3 \subset \dots \subset X$$
denote the chain of infinitesimal thickenings of $Y$ inside $X$, defined by powers of the ideal sheaf $\mathcal{I}_{Y/X}$.
We can define a covariant functor $\colim Y_{*}$ from commutative $\C$-algebras to sets by the prescription
$$A \mapsto \colim Y_{*}(A).$$

\begin{lemma}\label{indvarieties}
Suppose the functor $\colim Y_*$ is represented by a scheme $Z$ of finite type.
Then the sequence $\{Y_{*}\}$ of subschemes stabilizes to $Z \subset X$
and the reduced support of $Z$ is a union of connected components of the reduced support of $X$.
\end{lemma}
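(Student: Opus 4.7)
The plan is to first show that the ind-system $\{Y_i\}$ is eventually constant with stabilized term $Z$, and then to deduce the geometric statement from a local idempotent argument. For the stabilization, each closed embedding $Y_i \hookrightarrow Y_{i+1}$ is a monomorphism of schemes, so the induced map $Y_i(T) \hookrightarrow Y_{i+1}(T)$ is injective for every test scheme $T$. Since filtered colimits of injections of sets are again injections, the tautological map $\psi_i \colon Y_i \to Z$ coming from $Z \simeq \colim Y_*$ is a monomorphism for each $i$. Now $Z$ is quasi-compact, being of finite type over $\C$, so pick a finite affine open cover $Z = \bigcup_\alpha \spec A_\alpha$. By the representing property of $Z$, each open inclusion $\spec A_\alpha \hookrightarrow Z$, viewed as an element of $Z(A_\alpha) = \colim_i Y_i(A_\alpha)$, factors through some $Y_{n_\alpha}$; setting $n = \max_\alpha n_\alpha$, the resulting maps $\spec A_\alpha \to Y_n$ agree on overlaps (since $\psi_n$ is a monomorphism and both restrictions give the inclusion into $Z$) and so glue to a morphism $f \colon Z \to Y_n$ with $\psi_n \circ f = \id_Z$. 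A monomorphism admitting a section is an isomorphism, so $\psi_n \colon Y_n \xrightarrow{\sim} Z$. Applying the same reasoning to $\psi_{n+1}$ together with the factorization $\psi_n = \psi_{n+1} \circ (Y_n \hookrightarrow Y_{n+1})$ then forces $Y_n \hookrightarrow Y_{n+1}$ itself to be an isomorphism, and inductively $Y_m = Z$ as subschemes of $X$ for every $m \geq n$.

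For the component statement, write $\I = \I_{Y/X}$ and $\J = \I^n$. The equality $Y_n = Y_{n+1}$ translates to $\I^n = \I^{n+1}$, giving $\J = \I \cdot \J$ and hence $\J^2 = \I^n \cdot \J = \J$, so $\J$ is a coherent idempotent ideal sheaf. On an affine open $\spec R \subset X$, let $J = \J|_{\spec R}$; this is a finitely generated idempotent ideal. Applying Nakayama's lemma to the equation $J = J \cdot J$ produces $x \in J$ with $(1-x)J = 0$; specializing to $a = x$ yields $x = x^2$, and for any $a \in J$ the relation $a = xa \in xR$ shows $J = xR$. The orthogonal decomposition $R = xR \oplus (1-x)R$ then realizes $V(J) = \spec((1-x)R)$ as a clopen subscheme of $\spec R$. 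Since $|Z| \cap \spec R = V(J)$ is clopen in $\spec R$ for every affine open of $X$, $|Z|$ is clopen in $|X|$, hence a union of connected components of $X_{\mathrm{red}}$.

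The key obstacle is the gluing step in the stabilization argument, which depends crucially on the finite-type hypothesis on $Z$: without quasi-compactness there would be no uniform $n$ through which the identity $Z \to Z$ factors, and the functorial argument would break down. Equivalently, this is the observation that the ind-scheme $\colim Y_*$ is representable by an honest finite-type scheme if and only if the system is already constant past some index.
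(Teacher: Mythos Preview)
Your proof is correct and follows essentially the same approach as the paper: factor the identity of $Z$ through some $Y_N$ using finite type/quasi-compactness to get stabilization, then observe that $\mathcal{I}_{Z/X}$ is idempotent and invoke Nakayama's lemma to conclude $Z$ is clopen in $X$. Your treatment is in fact more careful than the paper's on two points---you make the affine gluing explicit (the paper simply asserts that the map $Z\to\colim Y_*$ factors through some $Y_N$, which tacitly uses exactly your quasi-compactness argument since the functor is only defined on rings), and you spell out the idempotent-generator consequence of Nakayama rather than just saying the ideal ``must locally be zero or contain a unit.''
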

\begin{proof}
We have a map of ind-schemes $g:  Z \rightarrow \colim Y_*$ which induces the equivalence of functors. Since $Z$ is finite type, $g$ is induced by 
a map $g: Z \rightarrow Y_N$ for some $N$.  As a result, for $m > N$, the natural map $Y_m \rightarrow \colim Y_*$ factors through $Y_N$.  This is only possible if the inclusion $Y_N \hookrightarrow Y_m$ is an isomorphism for all $m >N$.  Therefore, we have a natural identification $Z = \lim Y_* = Y_N$, so that $Y_N = Z$ as schemes. For the second claim, note that 
the ideal sheaf $\mathcal{I}_{Z/X}$ equals its own square, so must locally be zero 
or contain a unit by Nakayama's lemma.  Therefore $Z$ is an open and closed subscheme of $X$ which 
implies the result.
\end{proof}

We will apply this lemma to the inclusion $\mathcal{M}^{\sfr,\Sp, W^{\diamondsuit}}_{\mathbf{n},\mathbf{s},\gamma} \subset \mathcal{M}^{\sfr, W^{\diamondsuit}}_{\mathbf{n},\mathbf{s},\gamma}.$
In order to show $$\colim\left(\mathcal{M}^{\sfr,\Sp, W^{\diamondsuit}}_{\mathbf{n},\mathbf{s},\gamma}\right)_{\ast}$$ is represented by a projective scheme, we show that it classifies families of framed nilpotent modules and use the derived equivalence $\Phi_{\mathbf{s}}$ to identify it with a quiver Grassmannian associated to $(Q,W)$.

\begin{definition}\label{nilpotentmodule}
Given a commutative $\C$-algebra $A$, a nilpotent $\hat{J}(\mu_{\mathbf{s}}(Q),W^{\diamondsuit})$-module over $A$ is a finite projective $A$-module $F$
equipped with an action of $\hat{J}(\mu_{\mathbf{s}}(Q),W^{\diamondsuit})/\hat{J}(\mu_{\mathbf{s}}(Q),W^{\diamondsuit})_{\geq m}$ by $A$-endomorphisms, for some $m \geq 1$; here we use the trivial (path length) grading. 
\end{definition}
\begin{remark}\label{geometricpoints}
When $A$ is finitely generated, a $J(\mu_{\mathbf{s}}(Q),W^{\diamondsuit})$-module $F$ over $A$ is nilpotent if and only if, for each geometric point
$\spec K \rightarrow \spec A$, the base change $F \otimes K$ is nilpotent over~$K$.  Indeed, one can bound the order of nilpotence of $F$ by 
the maximum order of nilpotence of $F$ over generic points of $\spec A$ multiplied by the the order of nilpotence of the nilradical of $A$.
\end{remark}
\begin{definition}\label{framednilpotentmodule}
Fix a $\C$-algebra $A$ and vectors $\mathbf{n}$ and $\gamma$. We define a stable, framed nilpotent $\hat{J}(\mu_{\mathbf{s}}(Q),W^{\diamondsuit})$-module over $A$ to be a nilpotent module $F$ over $A$ equipped with a morphism
$$u: \hat{J}(\mu_{\mathbf{s}}(Q),W^{\diamondsuit})_{\mathbf{n}}\otimes A \rightarrow F$$
such that, after restriction to any geometric point $\spec K \rightarrow \spec A$,
\begin{itemize}
\item[(i)] $F \otimes K$ has dimension vector $\gamma$ and lies in $\mathcal{C}_{< \theta_{\mathbf{s}}}$;
\item[(ii)] $\mathrm{coker}(u) \otimes K$ lies in $\mathcal{C}_{\geq \theta_{\mathbf{s}}}$;
\end{itemize}
in other words, over each geometric point of $\spec A$, we have a $K$-point of $\mathcal{M}^{\sfr,\Sp,W^{\diamondsuit}}_{\mathbf{n},\mathbf{s},\gamma}$.
\end{definition}
 
Let $\mathcal{M}^{\sfr,\nilp, W^{\diamondsuit}}_{\nn,\mathbf{s},\gamma}\!\!(A)$ denote the set of stable framed nilpotent modules over $A$.
The functor $\mathcal{M}^{\sfr,\nilp, W^{\diamondsuit}}_{\nn,\mathbf{s},\gamma}$ defined in this way commutes with directed colimits, since framed nilpotent modules are determined by the action on finitely many generators and the constraint on slopes is an open condition.

\begin{lemma}\label{familiesofmodules}
We have a natural identification 
$$\mathcal{M}^{\sfr,\nilp, W^{\diamondsuit}}_{\nn,\mathbf{s},\gamma} \stackrel\sim\longrightarrow \colim\left(\mathcal{M}^{\sfr,\Sp,W^{\diamondsuit}}_{\mathbf{n},\mathbf{s},\gamma}\right)_{\ast}.$$
\end{lemma}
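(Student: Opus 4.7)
The plan is to construct mutually inverse natural transformations. Writing $X := \mathcal{M}^{\sfr,W^{\diamondsuit}}_{\nn,\mathbf{s},\gamma}$ and $Y := \mathcal{M}^{\sfr,\Sp,W^{\diamondsuit}}_{\nn,\mathbf{s},\gamma}$, both functors factor through $\Hom(-,X)$ via the underlying stable framed module, so it suffices to identify the two subfunctors of $\Hom(-,X)$ they cut out. First I would reduce to the case where $A$ is a finitely generated $\C$-algebra (hence Noetherian): the left-hand functor commutes with directed colimits as noted in the paragraph preceding the lemma, and each thickening $Y_k$ of $Y$ inside $X$ is a closed subscheme of the quasiprojective variety $X$, hence of finite type, so the colimit on the right also commutes with filtered colimits of rings. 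Since every $\C$-algebra is a filtered colimit of its finitely generated subalgebras, this reduction is legitimate.

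For the forward map, given $\phi\colon \spec A \to Y_k$, the composite $\spec A \to X$ classifies a stable framed $J(\mu_{\mathbf{s}}(Q),W^{\diamondsuit})$-module $(F,u)$ over $A$; I claim $F$ is nilpotent. At each geometric point $\spec K \to \spec A$, the image lands set-theoretically in $Y$, so $\tr(l)(F\otimes K) = 0$ for every cycle $l$ in $\mu_{\mathbf{s}}(Q)$; by the Le Bruyn--Procesi theorem cited in the preceding paragraph, $F \otimes K$ is nilpotent over $K$. For finitely generated $A$, Remark~\ref{geometricpoints} then lifts this pointwise nilpotence to global nilpotence of $F$, producing an element of $\mathcal{M}^{\sfr,\nilp,W^{\diamondsuit}}_{\nn,\mathbf{s},\gamma}(A)$.

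For the backward map, I would start from $(F,u) \in \mathcal{M}^{\sfr,\nilp,W^{\diamondsuit}}_{\nn,\mathbf{s},\gamma}(A)$, so that the image of $(\C \mu_{\mathbf{s}}(Q))_{\geq m}$ annihilates $F$ for some $m$. For every cycle $l$ of $\mu_{\mathbf{s}}(Q)$, the power $l^{\lceil m/|l|\rceil}$ is a path of length $\geq m$ and therefore acts by zero, so $l$ induces a nilpotent endomorphism of the projective $A$-module $F e_{s(l)}$; a local characteristic-polynomial computation then forces $\tr(l)(F) \in \mathrm{nilrad}(A)$. Taking radicals, $\phi^*\mathcal{I}_{Y/X} \subseteq \mathrm{nilrad}(A)$, and since $A$ is Noetherian the nilradical is itself nilpotent, whence $\phi^*(\mathcal{I}_{Y/X}^{k}) = (\phi^*\mathcal{I}_{Y/X})^{k} = 0$ for some $k$, yielding the required factorization $\spec A \to Y_k$. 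The two assignments are mutually inverse because both are determined by the underlying map $\spec A \to X$, and the dependence of $k$ on choices is absorbed by passing to the colimit.

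The hard part is this last step: passing from nilpotence of the individual traces $\tr(l)(F)$ to nilpotence of the ideal $\phi^*\mathcal{I}_{Y/X}$ requires the nilradical of $A$ to be itself a nilpotent ideal, which holds precisely in the Noetherian case. Without the a priori reduction to finitely generated $A$ via the compatibility of both functors with directed colimits, the ideal could sit in a non-nilpotent nilradical and the argument would collapse, so this compatibility is genuinely essential rather than cosmetic.
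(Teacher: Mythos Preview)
Your proposal is correct and follows essentially the same strategy as the paper: reduce to finitely generated $A$ using that both functors commute with directed colimits, and then identify each side with the set of $A$-points of $X=\mathcal{M}^{\sfr,W^{\diamondsuit}}_{\nn,\mathbf{s},\gamma}$ whose geometric points land in $Y=\mathcal{M}^{\sfr,\Sp,W^{\diamondsuit}}_{\nn,\mathbf{s},\gamma}$. The paper compresses this into a single sentence invoking Remark~\ref{geometricpoints}, whereas you unpack the two directions explicitly (nilpotence of the traces $\tr(l)$, nilpotence of the nilradical in the Noetherian case); but the underlying argument is the same.
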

\begin{proof}
Both sides commute with directed colimits, so it suffices to construct this identification on finitely generated rings $A$.  By Remark~\ref{geometricpoints}, each set 
consists of $A$-points of 
$\mathcal{M}^{\sfr,W^{\diamondsuit}}_{\mathbf{n},\mathbf{s},\gamma}$
whose geometric points lie in the closed subvariety
$\mathcal{M}^{\sfr,\Sp,W^{\diamondsuit}}_{\mathbf{n},\mathbf{s},\gamma}$.
\end{proof}

Consider again the equivalence of dg-categories $\Phi_{\mathbf{s}}$.  
By construction \cite{KY11}, there exists a bimodule $S$, with left $\hat{\Gamma}(\mu_{\mathbf{s}}(Q),W^{\diamondsuit})$ action and right $\hat{\Gamma}(Q,W)$ action, such that $\Phi_{\mathbf{s}}$ is the functor 
\[\Hom_{\hat{\Gamma}(Q,W)}(S,-):\hat{\Gamma}(Q,W)\text{-mod}\rightarrow \hat{\Gamma}(\mu_{\mathbf{s}}(Q),W^{\diamondsuit})\text{-mod}.\]  
Consider the dg module
$$\hat{\Gamma}(\mu_{\mathbf{s}}(Q),W^{\diamondsuit})_{\nn}:=\oplus_{i\leq n}(e_i\cdot \hat{\Gamma}(\mu_{\mathbf{s}}(Q),W^{\diamondsuit}))^{\oplus \nn_i}$$
and its preimage
$$P_{\nn} = \Phi_{\mathbf{s}}^{-1} \left(\hat{\Gamma}(\mu_{\mathbf{s}}(Q),W^{\diamondsuit})_{\nn}\right).$$
By \cite[Prop.2.18]{plamondon}, $P_{\nn}$ is quasi-isomorphic to a module which, as a graded
module, is a sum of a finite sum of modules $e_i\cdot\hat{\Gamma}(Q,W)$ and a
finite sum of modules $e_i\cdot\hat{\Gamma}(Q,W)[-1]$. In particular, $P_{\nn}$ is concentrated in degrees less 
than or equal to 1, and we have a natural truncation morphism
$$P_{\nn}[1] \rightarrow \Ho^1(P_{\nn}).$$
As explained in~\cite[Cor.4.11]{Efi11}, $\Ho^1(P_{\nn})$ is 
a finite-dimensional $\hat{J}(Q,W)$-module. We will need the following stronger statement below. 

\begin{lemma} We have $\Ho^1(P_{\mathbf{n}})\in T_{\mathbf{s}}$. \label{lem_H1_in_torsion}
\end{lemma}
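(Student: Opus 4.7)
The plan is to induct on the length $t$ of $\mathbf{s}$, exploiting the factorization $\Phi_{\mathbf{s}} = \Phi_{s_t,\epsilon_t} \circ \Phi_{\mathbf{s}_{\leq t-1}}$ and the explicit description of one-step mutations due to Keller--Yang~\cite{KY11}. For $t = 0$ the functor $\Phi_{\mathbf{s}}$ is the identity, so $P_{\mathbf{n}} = \hat{\Gamma}(Q,W)_{\mathbf{n}}$ is a direct sum of indecomposable projective dg-modules, whose cohomology is concentrated in non-positive degrees; thus $H^1(P_{\mathbf{n}}) = 0 \in T_{\emptyset}$.

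For the inductive step, set $(Q',W') := \mu_{\mathbf{s}_{\leq t-1}}(Q,W)$ and
\[
P'_{\mathbf{n}} := \Phi_{s_t,\epsilon_t}^{-1}\bigl(\hat{\Gamma}(\mu_{\mathbf{s}}(Q,W))_{\mathbf{n}}\bigr) \in \Perf(\hat{\Gamma}(Q',W')),
\]
so that $P_{\mathbf{n}} = \Phi_{\mathbf{s}_{\leq t-1}}^{-1}(P'_{\mathbf{n}})$. The Keller--Yang formula for a single mutation exhibits $P'_{\mathbf{n}}$ as an extension in the derived category of $\hat{\Gamma}(Q',W')_{\mathbf{n}}$ by a finite direct sum of copies of the simple $\hat{J}(Q',W')$-module $S'_{s_t}$ at the mutated vertex $s_t$, with the copies sitting in cohomological degree $1$ or $0$ according as $\epsilon_t = +$ or $\epsilon_t = -$. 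Pulling this extension back via $\Phi_{\mathbf{s}_{\leq t-1}}^{-1}$ and using the definition $S_{\mathbf{s},t} := \Phi_{\mathbf{s}_{\leq t-1}}^{-1}(S'_{s_t})$ gives a triangle in $D(\hat{J}(Q,W)\mathrm{-Mod})$ relating $P_{\mathbf{n}}$ to $\Phi_{\mathbf{s}_{\leq t-1}}^{-1}(\hat{\Gamma}(Q',W')_{\mathbf{n}})$, which is the analogue of $P_{\mathbf{n}}$ for the shorter sequence $\mathbf{s}_{\leq t-1}$, and to copies of $S_{\mathbf{s},t}$.

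By the inductive hypothesis, the degree-one cohomology of the first term lies in $T_{\mathbf{s}_{\leq t-1}}$. Taking the long exact sequence of cohomology for the standard $t$-structure on $D(\hat{J}(Q,W)\mathrm{-Mod})$, we find that $H^1(P_{\mathbf{n}})$ fits into a short exact sequence in $\hat{J}(Q,W)\mathrm{-mod}$ whose outer terms are a direct summand of a sum of copies of $S_{\mathbf{s},t}$ and a quotient of an element of $T_{\mathbf{s}_{\leq t-1}}$. When $\epsilon_t = +$, the recursive formula~\eqref{recdef1} reads $T_{\mathbf{s}} = T_{\mathbf{s}_{\leq t-1}} \star S_{\mathbf{s},t}^{\oplus}$, so $H^1(P_{\mathbf{n}}) \in T_{\mathbf{s}}$. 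When $\epsilon_t = -$, the relevant copies of $S'_{s_t}$ contribute to $H^0(P'_{\mathbf{n}})$ rather than $H^1$, and one checks that $S_{\mathbf{s},t}$ itself lies in $T_{\mathbf{s}_{\leq t-1}}$, so the induced $H^1(P_{\mathbf{n}})$ is an element of $T_{\mathbf{s}_{\leq t-1}} \cap {}^{\perp}S_{\mathbf{s},t} = T_{\mathbf{s}}$ by the other branch of~\eqref{recdef1}.

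The main technical obstacle is to fix the orientation of the Keller--Yang triangle in each sign case and to justify the long exact sequence in cohomology through the non-$t$-exact equivalence $\Phi_{\mathbf{s}_{\leq t-1}}^{-1}$. The latter is handled via Plamondon's minimal-model theorem~\cite[Prop.~2.18]{plamondon}, which guarantees that the preimage of any direct sum of projectives is quasi-isomorphic, as a graded dg-module, to a two-term complex built from projectives and shifted projectives of $\hat{\Gamma}(Q,W)$, so the long exact sequence in the standard heart is well-defined and has the stated shape, enabling the bookkeeping of torsion summands above.
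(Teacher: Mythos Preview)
Your inductive strategy is genuinely different from the paper's, but the key step fails. You assert that the Keller--Yang equivalence for a single mutation exhibits $P'_{\mathbf{n}}$ as an extension, in the derived category, of $\hat\Gamma(Q',W')_{\mathbf{n}}$ by finitely many copies of the simple $S'_{s_t}$. This is not what \cite[Thm.~3.2]{KY11} provides: the tilting object there realises the mutated summand $\Phi_{s_t,\epsilon_t}^{-1}\bigl(e_{s_t}\hat\Gamma(\mu_{\mathbf{s}}(Q,W))\bigr)$ via an approximation triangle built from the \emph{other indecomposable projectives} $e_j\hat\Gamma(Q',W')$, not from simples. In fact no such triangle can exist in general. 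In $\KK(\Perf(\hat\Gamma(Q',W')))$, writing $[S'_{s_t}]=\sum_j c_j[e_j\hat\Gamma(Q',W')]$ one has $c_j=\chi(S'_{s_t},S'_j)$, and the $3$-Calabi--Yau pairing together with the absence of loops forces $c_{s_t}=0$; on the other hand the Keller--Yang approximation triangle gives $[\Phi_{s_t,\epsilon_t}^{-1}(e_{s_t}\hat\Gamma(\mu_{\mathbf{s}}(Q,W)))]-[e_{s_t}\hat\Gamma(Q',W')]$ coefficient $-2$ along $[e_{s_t}\hat\Gamma(Q',W')]$. Hence the cone of any map between $\hat\Gamma(Q',W')_{\mathbf{n}}$ and $P'_{\mathbf{n}}$ cannot be a sum of shifted copies of $S'_{s_t}$ once $\mathbf{n}_{s_t}\neq 0$, and your long exact sequence has no input. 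Even had such a triangle existed, your $\epsilon_t=-$ case gives no argument that $H^1(P_{\mathbf{n}})\in{}^\perp S_{\mathbf{s},t}$: that certain simples contribute only in degree $0$ places no constraint on $\Hom$ out of $H^1$.

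The paper's proof avoids the recursion~\eqref{recdef1} altogether. One applies $\Phi_{\mathbf{s}}$ directly to the truncation triangle $K\to P_{\mathbf{n}}\to H^1(P_{\mathbf{n}})[-1]$, uses \cite[Prop.~2.18]{plamondon} (the very result you invoke at the end) to see that $\Phi_{\mathbf{s}}$ does not raise the top cohomological degree of a perfect module, and reads off $H^0\bigl(\Phi_{\mathbf{s}}(H^1(P_{\mathbf{n}}))\bigr)=0$ from the resulting long exact sequence. Corollary~\ref{Totherway} then identifies $T_{\mathbf{s}}$ precisely with those finite-dimensional $\hat J(Q,W)$-modules whose image under $\Phi_{\mathbf{s}}$ has vanishing $H^0$, and the lemma follows at once.
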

\begin{proof} Consider the distinguished triangle
\[
K\rightarrow P_{\mathbf{n}}\rightarrow \Ho^1(P_{\mathbf{n}})[-1].
\]
The module $\Ho^1(P_{\mathbf{n}})[-1]$ is perfect, since it is
finite-dimensional as mentioned above.  The module $K$ has cohomology supported in degrees less than or equal
to zero, and is also perfect, as $P_{\mathbf{n}}$ is by
definition.  From
\cite[Prop.2.18]{plamondon} we deduce that $\Phi_{\mathbf{s}}$ sends
summands $e_i\cdot \hat{\Gamma}(Q,W)$ to extensions of summands
$e_i\cdot \hat{\Gamma}(\mu_{\mathbf{s}}(Q),W^{\diamondsuit})[t]$ for
$t=0,1$. In particular, for any $l\in\mathbb{Z}$ and for $M$ a perfect
$\hat{\Gamma}(Q,W)$-module, , $\Ho^{\geq l}(M)=0$ implies $\Ho^{\geq l}(\Phi_{\mathbf{s}}(M))=0$.  It follows that $\Ho^{\geq 1}(\Phi_{\mathbf{s}}(K))$ vanishes, as does $\Ho^{\geq 1}(\Phi_{\mathbf{s}}(P_{\mathbf{n}}))$ by definition, and so from the long exact sequence in cohomology, $\Ho^1(\Phi_{\mathbf{s}}(\Ho^1(P_{\mathbf{n}})[-1]))=0=\Ho^0(\Phi_{\mathbf{s}}(\Ho^1(P_{\mathbf{n}})))$.  Since $\Ho^1(P_{\mathbf{n}})$ is a finite-dimensional $\hat{J}(Q,W)$-module, we deduce from Corollary \ref{Totherway} that $\Ho^1(P_{\mathbf{n}})\in T_{\mathbf{s}}$.
\end{proof}

If we consider the quiver Grassmannian
$$\mathrm{Grass}(\Ho^1(P_{\nn}),-\Phi_{\mathbf{s}}^{-1}(\gamma))$$
of quotient submodules, this is naturally a closed subscheme of the usual Grassmannian of $\Ho^1(P_{\nn})$ viewed as a vector space. 

\begin{proposition}\label{equivalenceofstacks}
There is an equivalence of functors 
$$\PHI_{\mathbf{s}}:  \mathrm{Grass}(\Ho^1(P_{\nn}),-\Phi_{\mathbf{s}}^{-1}(\gamma)) \stackrel\sim\longrightarrow \mathcal{M}^{\sfr,\nilp, W^{\diamondsuit}}_{\nn,\mathbf{s},\gamma}.$$
In particular, 
$\mathcal{M}^{\sfr,\nilp, W^{\diamondsuit}}_{\nn,\mathbf{s},\gamma}$ is represented by a projective scheme.
\end{proposition}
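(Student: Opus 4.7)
The plan is to exhibit $\mathcal{M}^{\sfr,\nilp, W^{\diamondsuit}}_{\mathbf{n},\mathbf{s},\gamma}$ as the functor of points of the quiver Grassmannian by transporting across the derived equivalence $\Phi_{\mathbf{s}}$, and then to deduce representability and projectivity.

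On $\C$-points, I would begin with a stable framed nilpotent module $(E,u)$ and complete $u$ to a triangle
\[
\hat{\Gamma}(\mu_{\mathbf{s}}(Q),W^{\diamondsuit})_{\mathbf{n}} \xrightarrow{u} E \to C \to \hat{\Gamma}(\mu_{\mathbf{s}}(Q),W^{\diamondsuit})_{\mathbf{n}}[1].
\]
Applying $\Phi_{\mathbf{s}}^{-1}$, the stability requirements $E \in \mathcal{C}_{<\theta_{\mathbf{s}}}$ and $\coker(u) \in \mathcal{C}_{\geq \theta_{\mathbf{s}}}$ translate, via Proposition~\ref{mut_is_tilt} and Corollary~\ref{Totherway}, into the statement that $\Phi_{\mathbf{s}}^{-1}(E)[1]$ lies in $T_{\mathbf{s}}$ and $\Phi_{\mathbf{s}}^{-1}(C)$ in $F_{\mathbf{s}}$. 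Combining this with \cite[Prop.2.18]{plamondon} and the natural truncation $P_{\mathbf{n}}[1] \to \Ho^1(P_{\mathbf{n}})$, the triangle identifies $\Phi_{\mathbf{s}}^{-1}(E)[1]$ with a quotient of $\Ho^1(P_{\mathbf{n}})$, the kernel of this quotient having dimension vector $-\Phi_{\mathbf{s}}^{-1}(\gamma)$ as read off through the $K_0$-isomorphism induced by $\Phi_{\mathbf{s}}$. Conversely, given a submodule $K \hookrightarrow \Ho^1(P_{\mathbf{n}})$ of that dimension vector, the composite $P_{\mathbf{n}}[1] \to \Ho^1(P_{\mathbf{n}}) \to \Ho^1(P_{\mathbf{n}})/K$, pushed forward by $\Phi_{\mathbf{s}}$ and shifted back, yields a stable framed module, the framing being the universal map from $\Phi_{\mathbf{s}}(P_{\mathbf{n}}) = \hat{\Gamma}(\mu_{\mathbf{s}}(Q),W^{\diamondsuit})_{\mathbf{n}}$. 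That the two assignments are mutually inverse is immediate from the fact that both arise by applying $\Phi_{\mathbf{s}}^{\pm 1}$ to the same distinguished triangle.

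To promote this to an equivalence of functors, I would use that $\Phi_{\mathbf{s}}$ is given by a fixed bimodule $S$ from \cite{KY11}, so the whole construction commutes with base change to any $\C$-algebra $A$. Flatness is automatic on the Grassmannian side, and on the framed side it is controlled by Lemma~\ref{familiesofmodules} together with Remark~\ref{geometricpoints}, which reduces checking of stability and nilpotence to geometric points. Since $\mathrm{Grass}(\Ho^1(P_{\mathbf{n}}), -\Phi_{\mathbf{s}}^{-1}(\gamma))$ is cut out inside the ordinary Grassmannian of the finite-dimensional vector space $\Ho^1(P_{\mathbf{n}})$ by the closed condition that a subspace be $\hat{J}(Q,W)$-stable of the prescribed dimension vector, it is a projective scheme, and through $\PHI_{\mathbf{s}}$ it represents $\mathcal{M}^{\sfr,\nilp, W^{\diamondsuit}}_{\mathbf{n},\mathbf{s},\gamma}$. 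The main obstacle is the careful bookkeeping of cohomological degrees and $K_0$-classes under $\Phi_{\mathbf{s}}$, so that the sign convention $-\Phi_{\mathbf{s}}^{-1}(\gamma)$ matches the actual dimension vector of the submodule produced; once that is untangled, the rest follows formally from the tilting correspondence of Proposition~\ref{mut_is_tilt}.
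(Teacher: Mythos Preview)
Your overall strategy matches the paper's: transport the problem across the derived equivalence $\Phi_{\mathbf{s}}$, identify framed nilpotent modules with quotients of $\Ho^1(P_{\mathbf{n}})$, and read off projectivity from the quiver Grassmannian. The $\C$-points discussion is essentially right, though note that the Grassmannian here parametrizes \emph{quotients} of dimension vector $-\Phi_{\mathbf{s}}^{-1}(\gamma)$, not submodules; it is $\Phi_{\mathbf{s}}^{-1}(E)[1]$ itself, not the kernel, that carries this dimension vector (since $[\Phi_{\mathbf{s}}^{-1}(E)[1]]=-\Phi_{\mathbf{s}}^{-1}([E])=-\Phi_{\mathbf{s}}^{-1}(\gamma)$ in $K_0$).

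The genuine gap is in the passage to families. You assert that because $\Phi_{\mathbf{s}}=\Hom_{\hat{\Gamma}(Q,W)}(S,-)$ is given by a fixed bimodule, ``the whole construction commutes with base change'', and that flatness on the framed side is controlled by Lemma~\ref{familiesofmodules} and Remark~\ref{geometricpoints}. But neither of those results addresses the real issue: given a quotient $E$ that is finite projective over $A$, why is $\Phi_{\mathbf{s}}(E)$ quasi-isomorphic to a finite projective $A$-module concentrated in a single cohomological degree? Commuting with base change does not give this; a priori $\Hom(S,E)$ could have $A$-cohomology spread over several degrees. The paper supplies three ingredients you omit: first, an argument that $S$ is \emph{perfect} over $\hat{\Gamma}(Q,W)$ (obtained by observing that the quasi-inverse sends the perfect module $\hat{\Gamma}(\mu_{\mathbf{s}}(Q),W^{\diamondsuit})$ to $S$); second, Plamondon's structural result \cite[Lemma~2.14]{plamondon} that $S$ is quasi-isomorphic to a finite sum of shifted summands $e_i\hat{\Gamma}(Q,W)$, which makes $\Phi_{\mathbf{s}}(E)$ a perfect $A$-complex and forces $\Phi_{\mathbf{s}}(E)\otimes K=\Phi_{\mathbf{s}}(E\otimes K)$ at geometric points; third, Bridgeland's criterion \cite[Lemma~4.3]{bridgeland}, which converts ``perfect $A$-complex whose closed fibres are concentrated in degree $-1$'' into ``finite projective $A$-module in degree $-1$''. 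To feed the fibrewise hypothesis into Bridgeland's lemma you also need that each $E\otimes\C$ lies in $T_{\mathbf{s}}$, which is exactly what Lemma~\ref{lem_H1_in_torsion} provides (a quotient of $\Ho^1(P_{\mathbf{n}})\in T_{\mathbf{s}}$ stays in $T_{\mathbf{s}}$). Without these steps the functor $\PHI_{\mathbf{s}}$ is not shown to land in, or be surjective onto, families of genuine (flat) modules, and the equivalence over general $A$ does not follow.
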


\begin{proof}
It suffices to construct the equivalence on finitely generated $\C$-algebras $A$.
When $A = \C$, this is shown in \cite[Thm.5.3]{Efi11}. 

Consider the general case.
An $A$-point of $\mathrm{Grass}(\Ho^1(P_{\nn}),-\Phi_{\mathbf{s}}^{-1}(\gamma))$ consists of 
a $\hat{J}(Q,W)\otimes A$-module $E$ and a surjection
$$v: \Ho^1(P_{\nn})\otimes A \rightarrow E \rightarrow 0,$$
such that $E$ is finite and projective over $A$, with dimension vector $-\Phi_{\mathbf{s}}^{-1}(\gamma)$.

To define $\PHI_{\mathbf{s}}$, we compose $v$ with the truncation $P_{\nn}[1] \otimes A \rightarrow \Ho^1(P_{\nn})\otimes A$
to obtain a morphism of $\hat{\Gamma}(Q,W)\otimes A$-modules $v': P_{\nn}[1]\otimes A \rightarrow E$.
We then apply the functor $\Hom_{\hat{\Gamma}(Q,W)}(S,-)$
and pass to cohomology in degree $-1$.

When we apply $\Phi_{\mathbf{s}}$ to $P_{\nn}[1]\otimes A$ and pass to cohomology, we recover $\hat{J}(\mu_{\mathbf{s}}(Q),W^{\diamondsuit})_{\mathbf{n}}\otimes A$.
For the module $E$, we will now show that
$\Phi_{\mathbf{s}}(E) = \Hom_{\hat{\Gamma}(Q,W)}(S, E)$ 
has cohomology supported in degree $-1$, which is finite and projective as an $A$-module.  When $A = \C$, this follows from the definition of $\Phi_{\mathbf{s}}$.

Recall that $\Phi_{\mathbf{s}}$
induces a triangulated equivalence $\Perf(\hat{\Gamma}(Q,W))\rightarrow\Perf(\hat{\Gamma}(\mu_{\mathbf{s}}(Q,W)))$. We deduce that $S$ is perfect as a $\hat{\Gamma}(Q,W)$-module, since the quasi-inverse to $\Phi_{\mathbf{s}}$ is given by the functor $-\otimes_{\hat{\Gamma}(\mu_{\mathbf{s}}(Q,W)}S$, and $\hat{\Gamma}(\mu_{\mathbf{s}}(Q,W))$ is obviously a perfect $\hat{\Gamma}(\mu_{\mathbf{s}}(Q,W))$-module, so $\hat{\Gamma}(\mu_{\mathbf{s}}(Q,W))\otimes_{\hat{\Gamma}(\mu_{\mathbf{s}}(Q,W))}S$ is a perfect $\hat{\Gamma}(Q,W)$-module.  This module is isomorphic to $S$ in the derived category.

By~\cite[Lemma 2.14]{plamondon}, $S$ is quasi-isomorphic, as a dg $\hat{\Gamma}(Q,W)$-module, to a complex whose underlying module
is a direct sum of shifted summands of $\hat{\Gamma}(Q,W)$.
This implies that $\Phi_{\mathbf{s}}(E)$ is perfect as a complex of $A$-modules, i.e.~a finite complex of locally free $A$-modules.  
It also implies that 
$\Phi_{\mathbf{s}}(E) \otimes K = \Phi_{\mathbf{s}}(E \otimes K)$
for each geometric point of $\spec A$.
Therefore, after restriction to each closed point, 
$\Phi_{\mathbf{s}}(E) \otimes \C$ is supported in degree $-1$, since $E\otimes \C\in T_{\mathbf{s}}$, as it admits a surjection from $\Ho^1(P_{\mathbf{n}})$, which itself is in $T_{\mathbf{s}}$ by Lemma~\ref{lem_H1_in_torsion} above.

By \cite[Lemma 4.3]{bridgeland}, this implies that  $\Phi_{\mathbf{s}}(E)$ is quasi-isomorphic to a
finite projective $A$-module supported in degree $-1$.
Therefore, after applying $\Phi_{\mathbf{s}}$ to the morphism $v'$ and passing to cohomology, we have a morphism
$$u:  \hat{J}(\mu_{\mathbf{s}}(Q),W^{\diamondsuit})_{\mathbf{n}}\otimes A \rightarrow \Ho^{-1}(\Phi_{\mathbf{s}}(E)),$$
where the target is a finite projective $A$-module.
In order for this to define an element of
$\mathcal{M}^{\sfr,\nilp, W^{\diamondsuit}}_{\nn,\mathbf{s},\gamma}\!\!(A)$, 
we need to check the condition on the slopes after base change to geometric points.  These criteria are open conditions, 
so they can be deduced from the case of closed points, where it follows already from \cite{Efi11} -- again one uses that there is a surjection $\Ho^1(P_{\mathbf{n}})\rightarrow E\otimes \Cp$, and $\Ho^1(P_{\mathbf{n}})\in T_{\mathbf{s}}[-1]=\Phi_{\mathbf{s}}^{-1}(\mathcal{C}_{<\theta_{\mathbf{s}}}\cap\hat{J}(\mu_{\mathbf{s}}(Q,W))\textrm{--mod})$.

To define the inverse to $\PHI_{\mathbf{s}}$, we argue analogously using the inverse equivalence $\Phi_{\mathbf{s}}^{-1}$.
Given a framed nilpotent module,
$u:  \hat{J}(\mu_{\mathbf{s}}(Q),W^{\diamondsuit})_{\mathbf{n}}\otimes A \rightarrow F$,
we precompose with the truncation $\hat{\Gamma}(\mu_{\mathbf{s}}(Q),W^{\diamondsuit})_{\mathbf{n}}\rightarrow \Ho^0(\hat{\Gamma}(\mu_{\mathbf{s}}(Q),W^{\diamondsuit})_{\mathbf{n}})$, apply the inverse equivalence, and pass to cohomology in degree $1$ to obtain a morphism
of $\hat{J}(Q,W)\otimes A$-modules
$$v:  \Ho^1(P_{\nn})\otimes A \rightarrow E.$$
Note that by the condition on geometric points of $u$ of Definition \ref{framednilpotentmodule}, $\Ho^0(\Phi^{-1}_{\mathbf{s}}(F))$ vanishes and so $E\cong\Phi^{-1}_{\mathbf{s}}(F)[1]$.  Flatness of $E$ follows as before, via the condition on closed points.  Furthermore, $v$ is surjective after base change to closed points; therefore it is surjective and defines an $A$-point of the
quiver Grassmannian.
\end{proof}

Finally, Proposition~\ref{unionofcomponents} is a corollary of Proposition~\ref{equivalenceofstacks}, 
Lemma~\ref{familiesofmodules}, and Lemma~\ref{indvarieties}.

\subsection{Quantum cluster positivity: results}
We can now prove our main theorem on positivity of quantum cluster transformations.
\begin{theorem}[Lefschetz condition]\label{Lefschetz}
Let $(Q,W)$ be a graded QP, with~$W$ a nondegenerate formal superpotential, and~$Q$ compatible with the skew-symmetric form $\Lambda$.  Then the quantum cluster algebra $\mathcal{A}_{\Lambda,Q}$ satisfies the Lefschetz condition of Conjecture \ref{LefProperty}.
\end{theorem}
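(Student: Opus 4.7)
The plan is to feed the setup into Efimov's formula from Theorem~\ref{hodgetocluster} and then apply the purity/Lefschetz package of Corollary~\ref{cor:pure} to each of the vanishing cycle cohomology groups appearing as coefficients. First I would use Corollary~\ref{formaltoalg} (together with the fact that mutation is an involution) to replace the given formal potential $W$ by a graded \emph{algebraic} potential $W^{\diamondsuit}$ on $\mu_{\mathbf{s}}(Q)$, so that Assumption~\ref{mut_alg} is satisfied and the function $\tr(W^{\diamondsuit})_{\mathbf{n},\gamma}$ becomes an honest regular map on the smooth quasiprojective variety $\mathcal{M}^{\sfr}_{\mathbf{n},\mathbf{s},\gamma}$ provided by Proposition~\ref{sfr_smooth}. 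The grading of $(\mu_{\mathbf{s}}(Q),W^{\diamondsuit})$ induces a $\C^*$-action on $\mathcal{M}^{\sfr}_{\mathbf{n},\mathbf{s},\gamma}$ for which $\tr(W^{\diamondsuit})_{\mathbf{n},\gamma}$ is $\C^*$-equivariant of positive weight $d=|W|>0$, as noted in the remark preceding Theorem~\ref{hodgetocluster}.

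Next I would verify the geometric hypotheses of Corollary~\ref{cor:pure} for the pair $(\mathcal{M}^{\sfr}_{\mathbf{n},\mathbf{s},\gamma},\,\tr(W^{\diamondsuit})_{\mathbf{n},\gamma})$ with $Z=\mathcal{M}^{\sfr,\Sp,W^{\diamondsuit}}_{\mathbf{n},\mathbf{s},\gamma}$. The identification \eqref{sfr_is_crit} realises the critical locus as $\mathcal{M}^{\sfr,W^{\diamondsuit}}_{\mathbf{n},\mathbf{s},\gamma}$, and Proposition~\ref{unionofcomponents} tells us precisely that the nilpotent locus $Z$ is projective and is a union of connected components of the reduced critical locus. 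Hence Corollary~\ref{cor:pure} applies and yields a decomposition
\[
H^k\!\left(Z,\phi_{\tr(W^{\diamondsuit})_{\mathbf{n},\gamma}}\Q\right)=H^k_1\oplus H^k_{\neq 1}
\]
into pure Hodge structures of weights $k$ and $k-1$ respectively; it also gives hard Lefschetz isomorphisms $l^k\colon H^{n-k}_\bullet\xrightarrow{\sim}H^{n+k}_\bullet(k)$ (with $n=\dim \mathcal{M}^{\sfr}_{\mathbf{n},\mathbf{s},\gamma}$) induced by any ample class on $Z$, which exists since $Z$ is projective (concretely, it is the quiver Grassmannian of Proposition~\ref{equivalenceofstacks}).

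The main technical step is then to translate purity plus hard Lefschetz into the $P_{N,k}$-form demanded by Conjecture~\ref{LefProperty}. For this I would combine the two pure summands using the definition $\chi'_W(H,t)=\chi_W(H_1,t)+t\,\chi_W(H_{\neq 1},t)$: purity collapses each $\chi_W(H^k_\bullet,t)$ to a monomial in $q^{1/2}$, and hard Lefschetz turns the alternating sum $\sum_k (-1)^k \chi_W(H^k_\bullet,t)$ on a primitive Lefschetz decomposition into a positive combination of balanced strings
\[
q^{\frac{N}{2}}\bigl(q^{-k/2}+q^{(2-k)/2}+\cdots+q^{k/2}\bigr)=P_{N,k}(q),
\]
with $N$ and the parity of $k$ fixed by the bidegree. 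After the substitution $q^{1/2}\mapsto -q^{1/2}$ built into the integration map (see the discussion around Theorem~\ref{hodgetocluster}), signs align so that each primitive piece contributes a nonnegative integer multiple of some $P_{N,k}$; summing over $\gamma$ in Efimov's formula \eqref{efthm} gives the Laurent expansion of $\mu_{\mathbf{s}}(M)(\mathbf{n})$ in the initial cluster as a $\Z_{\geq 0}$-combination of such polynomials.

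Finally, I would invoke Proposition~\ref{intpowers} to guarantee that the resulting coefficient is a Laurent polynomial in integer powers of $q$, so that the parity condition on $k$ in $P_{N,k}$ is consistent with what appears. The chief obstacle in the argument is not a deep calculation but a careful bookkeeping issue: one must match the weight conventions of Corollary~\ref{cor:pure} (weights $k$ and $k-1$, shifted by $\dim\mathcal{M}^{\sfr}_{\mathbf{n},\mathbf{s},\gamma}$) with Efimov's normalisation $X^{[\hat{\Gamma}(\mu_{\mathbf{s}}(Q,W))_{\mathbf{n}}]}\cdots q^{-\chi_{\mu_{\mathbf{s}}(Q)}(\gamma,\gamma)/2}$ and with the twist $q^{1/2}\mapsto -q^{1/2}$, and check that hard Lefschetz survives the passage to $\chi'_W$ on each fixed $\mu_n$-isotypic component, since the two pieces $H^k_1$ and $H^k_{\neq 1}$ have different weight centres $k$ and $k-1$ but identical Lefschetz symmetry about the middle degree $n$. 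Once this accounting is in place, the theorem follows.
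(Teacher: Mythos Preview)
Your proposal is correct and follows essentially the same route as the paper: reduce to an algebraic graded $W^{\diamondsuit}$ via Corollary~\ref{formaltoalg}, invoke Proposition~\ref{unionofcomponents} to place the nilpotent locus inside the scope of Corollary~\ref{cor:pure}, feed the resulting purity and hard Lefschetz into Efimov's formula (Theorem~\ref{hodgetocluster}), and use Proposition~\ref{intpowers} for the parity clause. The paper compresses your ``primitive Lefschetz decomposition'' step into the one-line observation that hard Lefschetz forces the dimension sequence to be symmetric unimodal, hence a nonnegative combination of the $P_{N,k}$, but this is the same argument.
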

\begin{proof}
First we remark that the presence of a Lefschetz operator with centre $N$ on a pure Hodge structure $H^*$ implies that the sequence $\dim(H^i)$, for $i$ odd or $i$ even, is symmetric unimodal, since each $H^{N-k}$ is isomorphic to $H^{N+k}$, via a chain of linear maps that factor through all $H^{N-k+2j}$ for $j<k$. It follows that $\sum \dim(H^i)x^i$ is a positive integer combination of the polynomials $P_{N,k}(q)$ defined in Conjecture~\ref{LefProperty}.

By Corollary~\ref{formaltoalg}, we may pick $W^{\diamondsuit}$ so that $\tr(W^{\diamondsuit})_{\mathbf{n},\gamma}$ is a $\Cp^*$-equivariant algebraic function on $\mathcal{M}^{\sfr}_{\mathbf{n},\mathbf{s},\gamma}$.  By Theorem~\ref{hodgetocluster}, it is enough to show that $\Ho^*(\mathcal{M}^{\sfr,\Sp,W^{\diamondsuit}}_{\mathbf{n},\mathbf{s},\gamma},\phi_{\tr(W^{\diamondsuit})_{\mathbf{n},\gamma},=1})$ carries a pure Hodge structure with a Lefschetz operator, while $\Ho^*(\mathcal{M}^{\sfr,\Sp,W^{\diamondsuit}}_{\mathbf{n},\mathbf{s},\gamma},\phi_{\tr(W^{\diamondsuit})_{\mathbf{n},\gamma},\neq 1})$ carries a pure Hodge structure, of weight one less than the invariant part, also with a Lefschetz operator. The parity part of the Lefschetz condition will then follow from Proposition~\ref{intpowers}.

By Proposition~\ref{unionofcomponents},  the reduced support of $\mathcal{M}^{\sfr,\Sp,W^{\diamondsuit}}_{\mathbf{n},\mathbf{s},\gamma}$ is projective and is a union of connected components of the reduced support of the critical locus of $\tr(W^{\diamondsuit})_{\mathbf{n},\gamma}$, an algebraic function on $\mathcal{M}^{\sfr}_{\mathbf{n},\mathbf{s},\gamma}$.
Purity of the required relative weights, and existence of Lefschetz operators, is a direct application of Corollary~\ref{cor:pure}.
\end{proof}

\begin{corollary}
\label{classpos}
Let $(Q,W)$ be a graded QP, with~$W$ a nondegenerate formal superpotential, then $\mathcal{A}_Q$ satisfies the classical positivity condition of \cite{FZ02}.
\end{corollary}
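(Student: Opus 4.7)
The plan is to reduce to the quantum statement of Theorem~\ref{Lefschetz}, which assumes that $Q$ is compatible with some skew-symmetric form $\Lambda$. Since an arbitrary $Q$ need not admit such a $\Lambda$, the first step will be to embed $Q$ in a larger quiver $Q'$ that does: invoking Lemma~\ref{quantizationprop} together with Remark~\ref{quantizationrem}, I would attach acyclic quivers to $Q$ at various vertices to produce a quiver $Q'$ that contains $Q$ as a full subquiver, whose principal part coincides with that of $Q$, and which admits a compatible skew-symmetric form $\Lambda'$. The new arrows lie inside the attached acyclic pieces, so they participate in no cyclic word; consequently $W$ lifts unchanged to a potential on $Q'$, and it remains graded once I extend the grading to the new arrows in any way. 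Proposition~\ref{adding_trees} then guarantees that $W$, viewed as a potential on $Q'$, remains nondegenerate.

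Next I would apply Theorem~\ref{Lefschetz} to the triple $(Q',W,\Lambda')$ to obtain quantum cluster positivity for $\mathcal{A}_{\Lambda',Q'}$: every cluster monomial expands in any quantum cluster as a Laurent polynomial whose coefficients lie in $\mathbb{Z}_{\geq 0}[q^{\pm 1/2}]$. Specializing $q^{1/2}=1$ collapses the quantum torus to the classical one, sends $\mathcal{A}_{\Lambda',Q'}$ onto the commutative cluster algebra $\mathcal{A}_{Q'}$, and transports nonnegative Laurent expansions to nonnegative Laurent expansions, so classical positivity holds for $\mathcal{A}_{Q'}$.

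To descend from $\mathcal{A}_{Q'}$ to $\mathcal{A}_Q$, I would use that the vertices of $Q'_0\setminus Q_0$ are all frozen and that $Q$ and $Q'$ have the same principal part. Thus the exchange relations at principal vertices of $Q'$ differ from those of $Q$ only by multiplicative factors in the coefficient variables indexed by $Q'_0\setminus Q_0$, and the ring homomorphism $\mathcal{A}_{Q'}\to \mathcal{A}_Q$ sending each such coefficient variable to $1$ intertwines mutation at principal vertices. Every cluster variable of $\mathcal{A}_Q$ is the image of a cluster variable of $\mathcal{A}_{Q'}$, and the specialization preserves nonnegative integer Laurent expansions; classical positivity for $\mathcal{A}_Q$ follows.

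The only point that requires any care is the preservation of nondegeneracy under the enlargement $Q\rightsquigarrow Q'$, which is exactly what Proposition~\ref{adding_trees} was designed to ensure; gradedness is automatic because the new arrows do not appear in $W$. Everything else is a mechanical specialization, so no serious additional obstacle is expected.
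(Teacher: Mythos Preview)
Your proposal is correct and follows exactly the paper's own argument: quantize via Lemma~\ref{quantizationprop} and Remark~\ref{quantizationrem}, preserve nondegeneracy of $W$ using Proposition~\ref{adding_trees}, apply Theorem~\ref{Lefschetz}, then specialize $q^{1/2}=1$ and the new coefficient variables to $1$. The extra care you take in justifying that $W$ remains graded on $Q'$ and that the specialization homomorphism intertwines mutations is a welcome elaboration of details the paper leaves implicit.
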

\begin{proof}
Use Lemma~\ref{quantizationprop} to quantize $\mathcal{A}_Q$, possibly adding new vertices $v_i$ to the quiver, and observe that the potential~$W$ remains nondegenerate by Remark~\ref{quantizationrem} and Proposition~\ref{adding_trees}.  Then use Theorem~\ref{Lefschetz} and specialise all new $x_i$ to 1, and also set $q^{1/2}=1$.
\end{proof}

As mentioned already, this result is now superseded by the recent paper of Lee--Schiffler~\cite{LS}.

\begin{remark}
Our purity result on spaces of stable framed representations is similar to, but logically independent from, a conjecture of Kontsevich and Efimov~\cite[Conj.6.8.]{Efi11}. This conjecture states that the result is true for generic~$W$, while we prove it for those $(Q,W)$ where~$Q$ admits an edge grading so that~$W$ becomes 
a graded potential. A generic potential on a non-acyclic quiver does not satisfy this condition.
\end{remark}
\begin{corollary} 
\label{acyclic}
If~$Q$ is mutation equivalent to an acyclic quiver, then positivity holds for $\mathcal{A}_Q$.  If $\Lambda$ is a compatible skew-symmetric form, the Lefschetz condition holds for the quantum cluster algebra $\mathcal{A}_{\Lambda,Q}$, and in particular, positivity holds for quantum cluster transformations.
\end{corollary}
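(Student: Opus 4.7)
My plan is to reduce to the setting of Theorem~\ref{Lefschetz} and Corollary~\ref{classpos} by producing a graded nondegenerate potential on $Q$ itself, transported by mutation from the zero potential on an acyclic representative of the mutation class.

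First, pick an acyclic quiver $Q'$ mutation-equivalent to $Q$ and a mutation sequence $\mathbf{s}$ with $\mu_{\mathbf{s}}(Q') = Q$. Since $Q'$ has no oriented cycles, the only potential on $Q'$ is $W' = 0$, which can be declared homogeneous of any chosen positive degree; equip $Q'$ with the grading assigning weight $1$ to every arrow and declare $|W'| = 3$, say. By \cite[Cor.7.4]{DWZ08} some nondegenerate potential exists on $Q'$; being the only potential available, $W' = 0$ must itself be nondegenerate. So $(Q', 0)$ is a graded nondegenerate algebraic QP.

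Next, transport this structure to $Q$ by setting $(Q, W) := \mu_{\mathbf{s}}(Q', 0)$. By the grading conventions recalled in Section~4.3 following \cite[Def.6.4]{AO10}, each premutation introduces $3$-cycles of the form $[ab]\bar{b}\bar{a}$ whose degree is $|a| + |b| + (-|b|) + (|W'| - |a|) = |W'| = 3$, and by \cite[Thm.6.6]{AO10} the reduction step preserves the grading; hence $(Q, W)$ is a graded QP with $W$ homogeneous of strictly positive degree. Nondegeneracy of $W$ follows from the involutivity of QP-mutation \cite[Thm.5.7]{DWZ08}, exactly as used in Assumption~\ref{mut_alg}. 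If an algebraic representative is preferred (as required by the critical-cohomology machinery of Section~4.5), invoke Corollary~\ref{formaltoalg} to replace $W$ by a suitable truncation, still graded and nondegenerate.

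Finally, apply the main results of the previous subsection. When $\Lambda$ is compatible with $Q$, Theorem~\ref{Lefschetz} yields the Lefschetz condition on $\mathcal{A}_{\Lambda, Q}$, which in particular implies quantum positivity; Corollary~\ref{classpos} yields classical positivity for $\mathcal{A}_Q$ with no compatibility hypothesis. The only potentially substantive step is the nondegeneracy of $W' = 0$ on the acyclic quiver, and this reduces instantly to the existence theorem \cite[Cor.7.4]{DWZ08}; the rest is bookkeeping with the grading conventions and the involutive character of QP-mutation.
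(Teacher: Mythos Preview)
Your argument is correct and follows essentially the same route as the paper: the acyclic representative $Q'$ carries the zero potential, which is nondegenerate by \cite[Cor.~7.4]{DWZ08} and trivially graded, and Theorem~\ref{Lefschetz} then applies. The paper's proof is a single sentence invoking exactly this observation, implicitly using mutation-invariance of the cluster algebra to pass between $Q$ and $Q'$; you instead make the transport of the graded QP structure along $\mathbf{s}$ explicit via the Amiot--Oppermann conventions, which is equally valid (and indeed this transport is already packaged in Corollary~\ref{formaltoalg}).

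One small quibble: your justification ``nondegeneracy of $W$ follows from the involutivity of QP-mutation'' is not quite the right label. What you actually need is that nondegeneracy is inherited by any mutation of a nondegenerate QP, and this follows directly from concatenation of mutation sequences: for any $\mathbf{t}$, the intermediate stages of $\mu_{\mathbf{t}}(Q,W)$ coincide with later stages of $\mu_{\mathbf{t}\circ\mathbf{s}}(Q',0)$, and the latter are free of $2$-cycles since $(Q',0)$ is nondegenerate. Involutivity \cite[Thm.~5.7]{DWZ08} is not needed for this step. This does not affect the correctness of your proof.
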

\begin{proof}
This is a direct result of Theorem~\ref{Lefschetz}, using the fact that $0$ is a nondegenerate potential on an acyclic quiver by \cite[Cor.7.4]{DWZ08}.  Note that $Q$ remains acyclic even after quantization.
\end{proof}
The (quantum) positivity version of Corollary~\ref{acyclic} is obtained using quite different methods in \cite[Cor.3.3.10]{FQ}.
\begin{corollary}
If~$Q$ is mutation equivalent to a quiver $Q'$, for which there is a nondegenerate potential admitting a cut, then $\mathcal{A}_Q$ satisfies the classical positivity property.  If $\Lambda$ is a compatible skew-symmetric form, then $\mathcal{A}_{\Lambda,Q}$ satisfies the Lefschetz condition.
\end{corollary}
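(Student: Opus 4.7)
The plan is to reduce the statement directly to Theorem~\ref{Lefschetz} and Corollary~\ref{classpos}, by first observing that a cut is a special case of a grading: it amounts to a $\{0,1\}$-valued grading of the arrows of $Q'$ making $W'$ homogeneous of degree one. In particular $(Q',W')$ is already a graded QP with nondegenerate formal potential in the sense required by the main theorems of Section~4.7. For the classical assertion, since $\mathcal{A}_Q$ depends only on the mutation class of $Q$, we have $\mathcal{A}_Q = \mathcal{A}_{Q'}$, and so Corollary~\ref{classpos} applied to $(Q', W')$ immediately yields the classical positivity property.

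For the quantum Lefschetz assertion we cannot apply Theorem~\ref{Lefschetz} directly to $(Q', W')$, since the hypothesis of compatibility with $\Lambda$ is placed on $Q$ rather than $Q'$. Instead, choose a sequence $\mathbf{r}$ of mutations taking $Q'$ to $Q$ and set $(Q, W) := \mu_{\mathbf{r}}(Q', W')$. By the grading conventions for mutation of graded QPs (\cite[Def.6.4, Thm.6.6]{AO10}) recalled in Section~4.3, $(Q, W)$ remains a graded QP with $W$ homogeneous of the same positive degree $|W'|$, although the arrow grading is now allowed to be $\mathbb{Z}$-valued rather than $\{0,1\}$-valued. Nondegeneracy of $W$ is inherited from that of $W'$: any further sequence $\mathbf{s}$ of mutations of $(Q, W)$ is a mutation of $(Q', W')$ along the concatenated sequence $\mathbf{s}\cdot\mathbf{r}$, which by hypothesis introduces no $2$-cycles (using that mutation is an involution on equivalence classes of QPs \cite[Thm.5.7]{DWZ08}). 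Since $\Lambda$ is compatible with $Q$ by hypothesis, Theorem~\ref{Lefschetz} now applies verbatim to $(Q, W, \Lambda)$ and delivers the Lefschetz condition for $\mathcal{A}_{\Lambda, Q}$.

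The argument is essentially formal and presents no substantial obstacle. The one point worth flagging is that the mutated potential $W$ is produced as a formal power series, with no guarantee of algebraicity; this is harmless because Theorem~\ref{Lefschetz} is stated for \emph{formal} nondegenerate graded potentials, and the reduction to algebraic potentials (via Corollary~\ref{formaltoalg}) is internal to the proof of that theorem. Thus the graded QP mutation formalism of Amiot--Oppermann, together with involutivity of QP mutation, does all the work.
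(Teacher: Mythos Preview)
Your proof is correct and follows the same approach as the paper, which simply states that this is ``just a special case of Theorem~\ref{Lefschetz} and Corollary~\ref{classpos}''; you have correctly filled in the details by observing that a cut is a grading and then transporting the graded nondegenerate potential from $Q'$ to $Q$ via the Amiot--Oppermann mutation formalism already recalled in Section~4.3. An equally valid and slightly shorter way to unpack the paper's one-line proof would be to note that the quantum cluster algebra together with its clusters and cluster monomials depends only on the mutation class of the initial seed, so $\mathcal{A}_{\Lambda,Q}=\mathcal{A}_{\mu_{\mathbf{r}}(\Lambda),Q'}$ with $\mu_{\mathbf{r}}(\Lambda)$ compatible with $Q'$, and then apply Theorem~\ref{Lefschetz} directly to $(Q',W',\mu_{\mathbf{r}}(\Lambda))$ without ever mutating the potential; but your route via mutating the graded QP is just as good and is in any case exactly what is done internally in the proof of Corollary~\ref{formaltoalg}.
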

See the start of Section~\ref{CohoDT} for the definition of a cut.  This is just a special case of Theorem~\ref{Lefschetz} and Corollary~\ref{classpos}.  We include it since in fact there is a rich class of examples satisfying this condition, continuing the thread from the first motivations of the study of cluster algebras, in terms of Lusztig's semicanonical bases.  The interested reader should consult \cite{GLS08} for a nice introduction to this, and in particular the cluster algebra description of the dual semicanonical basis of the nilpotent radicals of Kac-Moody algebras, arising from representation theory of the preprojective algebra of the associated quiver.  For the route from this theory to our result, and in particular the description in terms of quivers with potentials of a larger class of cluster algebras, the reader should consult \cite{BIRS11}.
\begin{corollary}\label{cor_atmost4}
Let $|Q_0|\leq 4$.  Then $\mathcal{A}_Q$ satisfies the classical positivity property. If $\Lambda$ is a compatible skew-symmetric form, then $\mathcal{A}_{\Lambda,Q}$ satisfies the Lefschetz condition.
\end{corollary}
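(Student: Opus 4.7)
By Theorem~\ref{Lefschetz} and Corollary~\ref{classpos}, it suffices to produce a graded nondegenerate potential on every quiver $Q$ with $|Q_0|\le 4$ (without loops or $2$-cycles); Lemma~\ref{quantizationprop}, Remark~\ref{quantizationrem}, and Proposition~\ref{adding_trees} guarantee that the quantization step needed in Theorem~\ref{Lefschetz} does not destroy graded nondegeneracy, so we are free to work with $Q$ itself. The plan is to reduce to a finite case check governed by the classification of mutation classes of low-rank quivers.

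First I would dispose of the mutation-acyclic case: if $Q$ is mutation equivalent to an acyclic quiver $Q'$, the zero potential on $Q'$ is trivially graded and nondegenerate by \cite[Cor.7.4]{DWZ08}, so Corollary~\ref{acyclic} applies directly. This handles essentially all $|Q_0|\le 3$ quivers except the oriented $3$-cycle, for which the canonical potential $W=abc$ with each arrow given weight $1$ is graded of degree $3$ and is nondegenerate (this is a classical computation; see e.g. \cite[Ex.8.2]{DWZ08}).

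For the remaining case $|Q_0|=4$ with $Q$ not mutation equivalent to any acyclic quiver, the list of mutation classes is finite and explicitly known. I would go through it case by case, exhibiting in each a positively graded potential: assign positive integer weights to the arrows so that every cycle appearing as a term in $W$ has the same total weight, making $W$ homogeneous of positive degree. For each class, nondegeneracy is then verified either (i) by the results of \cite{LF09} when the class arises from an ideal triangulation of a surface with nonempty boundary, (ii) by the existence of a cut, placing the potential in the framework of the earlier corollary and of \cite{GLS08}, or (iii) by a direct finite mutation computation when neither of the above applies.

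The main obstacle is the third route: proving nondegeneracy by bare hands requires checking that every finite sequence of mutations avoids $2$-cycles, which cannot be bypassed by soft arguments. However, for rank at most $4$ the combinatorics is finite and tractable, and in each case nondegenerate potentials are already documented in the literature on small quivers with potential; the only additional input needed here is to refine these to \emph{graded} nondegenerate potentials by choosing the arrow grading adapted to the (finitely many) cycles in the chosen $W$. Once such a $W$ is in hand for every $Q$ with $|Q_0|\le 4$, Theorem~\ref{Lefschetz} gives the Lefschetz condition for $\mathcal{A}_{\Lambda,Q}$, and Corollary~\ref{classpos} gives the classical positivity statement for $\mathcal{A}_Q$.
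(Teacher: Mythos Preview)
Your proposal has a genuine gap: the finiteness claims underpinning the case analysis are false. For $|Q_0|=3$, it is not true that ``essentially all'' such quivers are mutation-acyclic apart from the oriented $3$-cycle; already the Markov quiver (the oriented $3$-cycle with all arrow multiplicities equal to $2$) is not mutation equivalent to any acyclic quiver, and more generally cyclic $3$-vertex quivers with large multiplicities give infinitely many non-mutation-acyclic mutation classes. For $|Q_0|=4$ the situation is worse: the mutation classes of $4$-vertex quivers are certainly not finite in number (arrow multiplicities are unbounded), and the non-mutation-acyclic ones among them are not finitely many either. So a case-by-case enumeration cannot close the argument, and the appeal to ``documented in the literature on small quivers with potential'' does not salvage it.

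The paper's proof avoids classification entirely and is uniform. Starting from \emph{any} nondegenerate potential $W$ on $Q$, one writes $W=W_{\mathrm{simple}}+W_{\lnot\mathrm{simple}}$, where $W_{\mathrm{simple}}$ is the part supported on cycles visiting no vertex twice. The key observation, special to $|Q_0|\le 4$, is that this decomposition is respected by the operation $W\mapsto W_s$ at every vertex $s$, and that passing to the reduced QP commutes with setting $W_{\lnot\mathrm{simple}}=0$; combined with the fact that nondegeneracy with respect to a single mutation depends only on the degree-$3$ part, one deduces that $W_{\mathrm{simple}}$ is itself nondegenerate. A separate (and again finite, purely combinatorial) argument shows that for $|Q_0|\le 4$ one can always weight the arrows so that every simple cycle has the same positive weight, making $W_{\mathrm{simple}}$ graded. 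Both steps fail for $|Q_0|\ge 5$ (cf.~Remark~\ref{5ormore}), but they hold uniformly over all quivers on at most four vertices, with no enumeration of mutation classes required.
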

\begin{proof} We assume $|Q_0|=4$, the smaller cases being analogous or easier. Let~$W$ be a nondegenerate potential for~$Q$.  Write $W=W_{\text{simple}}+W_{\lnot \text{simple}}$, where the only cycles with nonzero coefficient in $W_{\text{simple}}$ are those that visit no vertex twice, and the cycles with nonzero coefficient in $W_{\lnot\text{simple}}$ are those that visit at least one vertex more than once.  Then one may check that this decomposition is respected by the process of taking $W_s$ for $s\in Q_0$ (this ceases to be true for $|Q_0|\geq 5$), and also that passing to a reduced QP commutes with setting $W_{\lnot\text{simple}}=0$.  Finally, the question of whether a potential $W'$ on a quiver $Q'$ is nondegenerate with respect to a single mutation at some vertex $i$ is settled entirely by $W_3$, the part of~$W$ given by 3-cycles by \cite[Prop.4.15]{FZ02}.  Putting these facts together, we deduce that $W_{\text{simple}}$ is a nondegenerate potential for~$W$.  Next, an explicit combinatorial argument shows that there is a positive weighting on the edges of~$Q$ such that every simple cycle has the same weight (this also ceases to be true for $|Q_0|\geq 5$ -- see Remark~\ref{5ormore}).  We deduce that $Q$ has a graded nondegenerate potential. 
\end{proof}

Let $S$ be a bordered surface with punctures, and marked points on the boundary.  In the paper \cite{FST06}, Fomin, Shapiro and Thurston give a way of associating a cluster algebra $\mathcal{A}_S$ to $S$.  In \cite{LF09}, Labardini-Fragoso gives an interpretation of their construction in terms of quivers with potentials, in that he defines for each $\Delta$ an `ideal triangulation' of such a surface (these form the clusters in the picture of Fomin, Shapiro and Thurston) a potential $W_{\Delta}$ on the associated quiver $Q_{\Delta}$, such that this construction commutes with mutation of clusters.  In \cite{MSW11}, the (classical) positivity conjecture is proved for these cluster algebras, by relating the coefficients occurring in cluster expansions to actual combinatorial objects.  Here we can say the following, where the first statement is a special case of the main result of \cite{MSW11}.

\begin{corollary}
Let $S$ be a surface with nonempty boundary, and marked points $M$, and let $\mathcal{A}_{Q_S}$ be the associated cluster algebra.  Then $\mathcal{A}_{Q_S}$ satisfies the classical positivity property.  Moreover, if $\mathcal{A}_{\Lambda_S, Q_S}$ is a quantization of this algebra, $\mathcal{A}_{\Lambda_S,Q_S}$ satisfies the Lefschetz property.
\end{corollary}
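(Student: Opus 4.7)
The plan is to reduce the statement to an application of Theorem~\ref{Lefschetz} and Corollary~\ref{classpos}. By the work of Labardini-Fragoso \cite{LF09}, every ideal triangulation $\Delta$ of $(S,M)$ gives rise to a quiver with potential $(Q_\Delta, W_\Delta)$ whose mutation class matches that of the Fomin--Shapiro--Thurston quivers $Q_S$, and for which $W_\Delta$ is known to be a nondegenerate potential when $\partial S\neq\emptyset$. Since nondegeneracy and the graded property are both mutation invariant (the latter via the grading conventions in \cite{AO10} recalled before Corollary~\ref{formaltoalg}), it suffices to exhibit a single triangulation $\Delta$ of $(S,M)$ and a grading on the arrows of $Q_\Delta$ making $W_\Delta$ homogeneous of some fixed positive degree.

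First, I would recall the explicit structure of $W_\Delta$: every term of $W_\Delta$ is either a $3$-cycle $\alpha_{ij}^T\alpha_{jk}^T\alpha_{ki}^T$ arising from an internal triangle $T$ with sides given by interior arcs $i,j,k$, or a cycle $\gamma_p$ around a puncture $p$ whose length equals the number of triangles of $\Delta$ incident to $p$. Assigning a positive weight $w(a)\in\mathbb{Z}_{>0}$ to every arrow $a\in (Q_\Delta)_1$ makes $W_\Delta$ graded of degree $D$ precisely when, for each triangle $T$, the three arrows of $T$ sum to $D$, and for each puncture $p$, the arrows of $\gamma_p$ also sum to $D$. Because $\partial S\neq\emptyset$, one can refine any triangulation (by further flips and possibly by adjoining arcs from boundary marked points) to one in which every puncture has exactly three incident triangles; on such a triangulation the constant assignment $w\equiv 1$ gives degree $D=3$ to every term of $W_\Delta$.

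With a graded nondegenerate potential in hand, Corollary~\ref{classpos} yields the classical positivity statement for $\mathcal{A}_{Q_S}=\mathcal{A}_Q$ (recovering the result of~\cite{MSW11}), while Theorem~\ref{Lefschetz} yields the stronger Lefschetz property for any quantization $\mathcal{A}_{\Lambda_S, Q_S}$. For the quantum statement, one needs a compatible $\Lambda_S$: by Lemma~\ref{quantizationprop} and Remark~\ref{quantizationrem} one can enlarge $Q_S$ by attaching acyclic trees at some vertices to obtain a quiver admitting a compatible skew-symmetric form, and by Proposition~\ref{adding_trees} this enlargement preserves both nondegeneracy of $W_\Delta$ and its grading (assigning, say, weight $D$ to the new arrows). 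Specialising the quantized algebra back gives the cluster algebra of interest, so Theorem~\ref{Lefschetz} applies as stated.

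The only nontrivial step is the combinatorial construction of the grading, i.e.\ the existence of a triangulation of $(S,M)$ with $\partial S\neq\emptyset$ in which every puncture has exactly three incident triangles. This is where the hypothesis $\partial S\neq\emptyset$ is crucially used: one can freely attach boundary-arcs to create enough flexibility to flip arcs around each puncture until its valence equals $3$, which is impossible in general when $\partial S=\emptyset$ (small genus/valence obstructions). I would expect this combinatorial lemma, rather than the sheaf-theoretic machinery, to be the main technical point of the proof; the remainder is a formal application of the preceding results.
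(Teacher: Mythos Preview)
Your overall strategy coincides with the paper's: reduce to Theorem~\ref{Lefschetz} and Corollary~\ref{classpos} by exhibiting, for some triangulation~$\Delta$, a grading on $Q_\Delta$ making the Labardini-Fragoso potential $W_\Delta$ homogeneous of positive degree. The paper also observes that it suffices to find a triangulation for which $W_\Delta$ is \emph{cubic}, since then the trivial path-length grading works. Where you diverge is in the combinatorial mechanism used to force cubicity in the presence of punctures.

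The paper does \emph{not} arrange that each puncture has three incident triangles. Instead it invokes~\cite{GLFS13} to choose a triangulation in which every puncture is incident to only \emph{one} arc, i.e.\ sits inside a self-folded triangle; for such triangulations the Labardini-Fragoso potential is already cubic by construction. Your proposed alternative --- flip until every puncture has valence exactly~$3$ --- is not available in general. The simplest obstruction is the once-punctured digon (disk with two boundary marked points and one interior puncture): an Euler-characteristic count gives exactly two triangles in any ideal triangulation, so the puncture can never be incident to three triangles, and your constant grading $w\equiv 1$ cannot make both the triangle $3$-cycles and the puncture cycle homogeneous of the same degree. More generally, the number of triangles incident to a puncture is bounded above by the total number of triangles, which for small surfaces with boundary can be less than~$3$; the self-folded route has no such obstruction. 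Your remark about ``adjoining arcs from boundary marked points'' does not help, since the marked set $M$ is fixed.

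A smaller point: in your last paragraph you enlarge $Q_S$ to produce a compatible $\Lambda_S$, but the corollary as stated already hypothesises a quantization $\mathcal{A}_{\Lambda_S,Q_S}$, so a compatible $\Lambda_S$ is given and Theorem~\ref{Lefschetz} applies directly; the enlargement argument is only needed for the classical statement (Corollary~\ref{classpos}).
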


\begin{proof}
First assume that all the marked points $M$ lie on the boundary of $S$.  A nondegenerate potential $W_S$ is constructed for $Q_S$, the quiver associated to $S$, in \cite{LF09}.  The potential $W_S$ is cubic, and so $(Q_S,W_S)$ is a graded QP.  In the case in which some of the marked points lie in the interior of $S$ (i.e. they are punctures), it is shown in upcoming work \cite{GLFS13} of Gei{\ss}, Labardini-Fragoso and Schr\"oer that there still exists an ideal triangulation of $(S,M)$ for which all punctures are incident to only one arc, so that the nondegenerate potential of \cite{LF09} remains cubic, allowing us to deduce the more general case too.
\end{proof} 
\begin{remark}\label{rem:not_all} Corollary~\ref{Lefschetz} does not prove the quantum positivity conjecture in general, since there exist examples of quivers~$Q$ with no graded nondegenerate potential~$W$ for any grading of~$Q$. Below is a hand-made example with $9$ vertices. 

There are three $4$-cycles and two $6$-cycles in~$Q$. It can be checked by hand that a potential~$W$ on~$Q$ can only be nondegenerate, if it contains each of these cycles with nonzero coefficient. Assume that a grading of~$Q$ exists which makes~$W$ graded of degree $|W|>0$. Denote by~$n$ the sum of all the edge weights.  Calculating from the three $4$-cycles, we get $n=3|W|$, whereas calculating from the two $6$-cycles, we get $n=2|W|$, a contradiction.

\centerline{
\begin{xy} 0;<1pt,0pt>:<0pt,-1pt>:: 
(0,112) *+{0} ="0",
(14,29) *+{1} ="1",
(66,64) *+{2} ="2",
(84,0) *+{3} ="3",
(104,64) *+{4} ="4",
(157,25) *+{5} ="5",
(173,113) *+{6} ="6",
(85,93) *+{7} ="7",
(84,134) *+{8} ="8",
"0", {\ar"1"},
"2", {\ar"0"},
"0", {\ar"7"},
"8", {\ar"0"},
"1", {\ar"3"},
"3", {\ar"2"},
"4", {\ar"3"},
"3", {\ar"5"},
"6", {\ar"4"},
"5", {\ar"6"},
"7", {\ar"6"},
"6", {\ar"8"},
\end{xy}
}

More structured examples of quivers without graded nondegenerate potentials come from punctured surfaces.  For instance, let $(S,P)$ be a punctured surface with no boundary (i.e.~let $P$ be a finite subset of the points of $S$).  A triangulation $T$ is required to have as its vertices exactly the points $P$.  The quiver associated to $T$ has cycles $l_p$ going around each puncture, as well as a cycle $l_{\Delta}$ of length three inscribed within each of the triangles $\Delta\in T$ of the original triangulation (the quiver is the dual graph to the triangulation).  The potential constructed in \cite{LF09} is just $\sum_{p\in P} l_p-\sum_{T\in\Delta} l_{\Delta}$.  In fact one can show that for large enough triangulations, none of the $l_p$ or $l_{\Delta}$ coefficients can be zero in a nondegenerate potential, i.e.~$W=\sum_{p\in P} \alpha_p l_p +\sum_{T\in\Delta} \beta_{\Delta} l_{\Delta}+(\text{other terms})$ for none of the $\alpha$ or $\beta$ equal to zero.  Since each edge of the quiver has one of the punctures on exactly one side, and is inscribed in exactly one of the triangles, one deduces as in the previous example that, assuming the existence of a graded potential $W$, the sum of the weights of the edges is equal to $|W||T|$ and also $|W||P|$; but one can have $|T|\neq|P|$.
\end{remark}
\begin{remark}
\label{5ormore}
As a first obstruction to extending the proof of Corollary~\ref{cor_atmost4}, consider the following quiver.
\[
\begin{xy} 0;<1pt,0pt>:<0pt,-1pt>:: 
(0,45) *+{0} ="0",
(25,110) *+{1} ="1",
(105,110) *+{2} ="2",
(130,45) *+{3} ="3",
(65,0) *+{4} ="4",
"1", {\ar"2"},
"3", {\ar"1"},
"1", {\ar"4"},
"0", {\ar"1"},
"2", {\ar"3"},
"4", {\ar"2"},
"2", {\ar"0"},
"3", {\ar"4"},
"0", {\ar"3"},
"4", {\ar"0"},
\end{xy}
\]
This quiver admits no grading such that all simple cycles have the same positive weight.  We do not know if this is a serious obstacle to strengthening Corollary~\ref{cor_atmost4} using Theorem~\ref{Lefschetz}, in other words whether there exists a quiver with 5 vertices which does not admit a graded non-degenerate potential.
\end{remark}

\begin{remark}
\label{5somos}
As a final example, consider the following quiver~\cite[Fig.9]{FM11}, arising from the Somos-5 recursion. 
\[
\begin{xy} 0;<1pt,0pt>:<0pt,-1pt>:: 
(0,45) *+{0} ="0",
(25,110) *+{1} ="1",
(105,110) *+{2} ="2",
(130,45) *+{3} ="3",
(65,0) *+{4} ="4",
"2", {\ar"1"},
"1", {\ar"3"},
"3", {\ar@2"2"},
"2", {\ar"0"},
"0", {\ar"1"},
"3", {\ar"0"},
"4", {\ar@2"3"},
"1", {\ar"4"},
"0", {\ar"4"},
\end{xy}
\]
This quiver has no 5-cycles and, as can be checked by hand, admits a large family of gradings with all simple
cycles of the same positive weight; e.g. grading the edges $04$, $13$ and $14$  with weight one and other edges with 
weight zero gives weight one for all simple loops. It can also be checked directly that a general linear combination
of all the simple cycles gives a nondegenerate potential for this quiver. We therefore deduce 
quantum positivity for the Somos-5 quiver; classical positivity for this example was first proved 
in~\cite{Sp} (in a stronger form).
\end{remark}

\end{document}